\newcommand{\minimize}[1]{{\displaystyle\min_{#1}}}
\newcommand{\Footnote}[1]{}
\newcommand{\red}[1]{{\color{red}#1\color{black}}}
\newcommand{\blue}[1]{{\color{blue}#1\color{black}}}
\newcommand{\diag}{\operatorname*{diag}}
\newcommand{\interior}{\operatorname{int}}
\newcommand{\boundary}{\operatorname{bnd}}
\newcommand{\bd}{\operatorname{bd}}
\newcommand{\na}{\nabla}
\newcommand{\RR}{\mathbb{R}}
\newcommand{\CCC}{\mathcal{C}}
\newcommand{\AAA}{\mathcal{A}}
\newcommand{\RRR}{\mathcal{R}}
\newcommand{\KKK}{\mathcal{K}}
\newcommand{\III}{\mathcal{I}}
\newcommand{\EEE}{\mathcal{E}}
\newcommand{\DDD}{\mathcal{D}}
\newcommand{\BBB}{\mathcal{B}}
\newcommand{\FFF}{\mathcal{F}}
\newcommand{\JJJ}{\mathcal{J}}
\newcommand{\NNN}{\mathcal{N}}
\newcommand{\YYY}{\mathcal{Y}}
\newcommand{\lam}{\lambda}
\newcommand{\xbar}{\skew{2.8}\bar x}
\newcommand{\setB}{\mathcal{B}}
\newcommand{\setC}{\mathcal{C}}
\newcommand{\setJ}{\mathcal{J}}
\newcommand{\setK}{\mathcal{K}}
\newcommand{\setT}{\mathcal{T}}
\newcommand{\subject}{\text{s.t.} \ }
\newcommand{\abs}[1]{|#1|}
\newcommand{\norm}[1]{\|#1\|}
\newcommand{\xstar}{x^*}
\newcommand{\bmat}[1]{\begin{bmatrix}#1\end{bmatrix}} 
\newtheorem{remark}[theorem]{Remark}
\newtheorem{assumption}{Assumption}
\title{A Quadratically Convergent Sequential Programming Method for Second-Order Cone Programs Capable of Warm Starts}
\author{Xinyi Luo\thanks{Department of Industrial Engineering and Management Sciences, Northwestern University.  This author was partially supported by National Science Foundation grant DMS-2012410.
        E-mail: \email{xinyiluo2023@u.northwestern.edu}}
   \and Andreas W\"achter\thanks{Department of Industrial Engineering and Management Sciences, Northwestern University.  This author was partially supported by National Science Foundation grant DMS-2012410.
        E-mail: \email{andreas.waechter@northwestern.edu}}}
\begin{document}

\maketitle

\begin{abstract}
We propose a new method for linear second-order cone programs.  
It is based on the sequential quadratic programming framework for nonlinear programming. In contrast to interior point methods, it can capitalize on the warm-start capabilities of active-set quadratic programming subproblem solvers and achieve a local quadratic rate of convergence.

In order to overcome the non-differentiability or singularity observed in nonlinear formulations of the conic constraints, the subproblems approximate the cones with polyhedral outer approximations that are refined throughout the iterations.
For nondegenerate instances, the algorithm implicitly identifies the set of cones for which the optimal solution lies at the extreme points.  As a consequence, the final steps are identical to regular sequential quadratic programming steps for a differentiable nonlinear optimization problem, yielding local quadratic convergence.

We prove the global and local convergence guarantees of the method and present numerical experiments that confirm that the method can take advantage of good starting points and can achieve higher accuracy compared to a state-of-the-art interior point solver.

%
%

\end{abstract}

\begin{keywords}
  nonlinear optimization, second-order cone programming, sequential quadratic programming
\end{keywords}

\begin{AMS}
90C15, 90C30, 90C55 
\end{AMS}

\section{Introduction}

We are interested in the solution of second-order cone programs (SOCPs) of the form
\begin{subequations}\label{eq:socp}
\begin{align}
\min_{x\in\RR^n} \ & c^Tx \\ 
\text{s.t.} \ & Ax\leq b, \label{eq:socp_2}\\
& x_j \in \KKK_j \qquad j\in\JJJ :=\{1,\ldots,p\}, \label{eq:socp_3}
\end{align}
\end{subequations}
where $c\in\RR^n$, $b\in\RR^m$, $A\in\RR^{m\times n}$, and $x_j$ is a subvector of $x$ of dimension $n_j$ with index set $\III_j\subseteq\{1,\ldots,n\}$.  We assume that the sets $\III_j$ are disjoint. The set $\KKK_j$ is the second-order cone of dimension $n_j$, i.e.,
\begin{equation}\label{eq:def_soc}
\KKK_j := \{y \in\RR^{n_j} :  \|\bar y\| \leq y_0\},
\end{equation}
where the vector $y$ is partitioned into  $y=(y_0, \bar y^T)^T$ with $\bar y=(\bar y_1,\ldots,\bar y_{n_j-1})^T$.
These problems arise in a number of important applications \cite{alizadeh2003second,ben2001polyhedral,molzahn2019survey,Tseng2007}

\begin{sloppypar}
Currently, most of the commercial software for solving SOCPs implements interior-point algorithms which utilize a barrier function for second-order cones, see, e.g.\ \cite{gurobi,cplex,mosek}. Interior-point methods have well-established global and local convergence guarantees \cite{potra2000interior} and are able to solve large-scale instances, but they cannot take as much of an advantage of a good estimate of the optimal solution as it would be desirable in many situations.
For example, in certain applications, such as online optimal control, the same optimization problem has to be solved over and over again, with slightly modified data. In such a case, the optimal solution of one problem provides a good approximation for the new instance. Having a solver that is capable of ``warm-starts'', i.e., utilizing this knowledge, can be essential when many similar problems have to be solved in a small amount of time.
\end{sloppypar}

For some problem classes, including linear programs (LPs), quadratic programs (QPs), or nonlinear programming (NLP), active-set methods offer suitable alternatives to interior-point methods.  They explicitly identify the set of constraints that are active (binding) at the optimal solution.  When these methods are started from a guess of the active set that is close to the optimal one, they often converge rapidly in a small number of iterations.  An example of this is the simplex method for LPs.  Its warm-start capabilities are indispensable for efficient branch-and-bound algorithms for mixed-integer linear programs. 

Active-set methods for LPs, QPs, or NLPs are also known to outperform interior-point algorithms for problems that are not too large \cite{ctx37679402130002441,hei2008numerical,7074396}.  Similarly, active-set methods might be preferable when there are a large number of inequality constraints among which only a few are active, since an interior-point method is designed to consider all inequality constraints in every iteration and consequently solves large linear systems, whereas an active set method can ignore all inactive inequality constraints and encounters potentially much smaller linear systems.

Our goal is to propose an active-set alternative to the interior-point method in the context of SOCP that might provide similar benefits.
We introduce a new sequential quadratic programming (SQP) algorithm that, in contrast to interior-point algorithms for SOCPs, has favorable warm-starting capabilities because it can utilize active-set QP solvers.
We prove that it is globally convergent, i.e., all limit points of the generated iterates are optimal solutions under mild assumptions, and that it enjoys a quadratic convergence rate for non-degenerate instances.  
Our preliminary numerical experiments demonstrate that these theoretical properties are indeed observed in practice.
They also show that the algorithm is able in some cases to compute a solution to a higher degree of precision than interior point methods.
This is expected, again in analogy to the context of LPs, QPs, and NLPs, since an interior point method terminates at a small, but nonzero value of the barrier parameter that cannot be made smaller than some threshold (typically $10^{-6}$ or $10^{-8}$) because the arising linear systems become highly ill-conditioned.  In contrast, in the final iteration of the active-set method, the linear systems solved correspond directly to the optimality conditions, without any perturbation introduced by a barrier parameter, and are only as degenerate as the optimal solution of the problem.

%
%


The paper is structured as follows.  Section~\ref{sec:prelim} reviews the sequential quadratic programming method and the optimality conditions of SOCPs.  Section~\ref{sec:alg} describes the algorithm, which is based on an outer approximation of the conic constraints.  Section~\ref{sec:theory} establishes the global and local convergence properties of the method, and numerical experiments are reported in Section~\ref{sec:numerical}.  Concluding remarks are offered in Section~\ref{sec:conclusion}.


\subsection{Related work}
While a large number of interior-point algorithms for SOCP have been proposed, including some that have been implemented in efficient optimization packages \cite{gurobi,cplex,mosek},
there are only very few approaches for solving SOCPs with an active-set framework.
The method proposed by Goldberg and Leyffer \cite{goldberg2015active} is a two-phase algorithm that combines a projected-gradient method with equality-constrained SQP.  However, it is limited to instances that have only conic constraints \eqref{eq:socp_3} and no additional linear constraints \eqref{eq:socp_2}.
Hayashi et al.\ \cite{hayashi2016simplex} propose a simplex-type method, where they reformulate the SOCP as a linear semi-infinite program to handle the fact that these instances have infinitely many extreme points.  The resulting dual-simplex exchange method shows promising practical behavior.  However, in contrast to the method proposed here, the authors conjecture that their method has only an R-linear local convergence rate.
Zhadan \cite{zhadan2021variant} proposes a similar simplex-type method.
Another advantage of the method presented in this paper is that the pivoting algorithm does not need to be designed and implemented from scratch.  Instead, it can leverage existing implementations of active-set QP solvers, in particular the efficient handling of linear systems.

    %

The proposed algorithm relies on polyhedral outer approximations based on well-known cutting planes for SOCPs.
%
%
For instance, the methods for mixed-integer SOCP by Drewes and Ulbrich \cite{drewes2012subgradient} and Coey et al.~\cite{coey2020outer} use
these cutting planes to build LP relaxations of the branch-and-bound subproblems.
%
%
%
We note that an LP-based cutting plane algorithm for SOCP could be seen as an active-set method, but it is only linearly convergent.
As pointed out in \cite{diehl2006loss}, it is crucial to consider the curvature of the conic constraint in the subproblem objective to achieve fast convergence.

    %

The term ``SQP method for SOCP'' has also been used in the literature to refer to methods for solving nonlinear SOCPs \cite{diehl2006loss,kato2007sqp,okuno2015sl1qp,zhang2012trust}.  However, in contrast to the method here, in these approaches, the subproblems themselves are SOCPs \eqref{eq:socp}  
and include the linearization of the nonlinear objective and constraints.
It will be interesting to explore extensions of the proposed method to nonlinear SOCPs in which feasibility is achieved asymptotically not only for the nonlinear constraints but also for the conic constraints.

\subsection{Notation}

For two vectors $x,y\in\RR^n$, we denote with $x\circ y$ their component-wise product, and the condition $x\perp y$ stands for $x^Ty=0$.  For $x\in\RR^n$, we define $[x]^+$ as the vector with entries $\max\{x_i,0\}$. 
%
%
We denote by $\norm{\cdot}$, $\|\cdot\|_1$, $\|\cdot\|_\infty$  the Euclidean norm, the $\ell_1$-norm, and the $\ell_\infty$-norm, respectively.
For a cone $\KKK_j$, $e_{ji}\in\RR^{n_j}$ is the canonical basis vector with $1$ in the element corresponding to $x_{ji}$ for $i\in\{0,\ldots,n_j-1\}$,
and $\interior (\setK_j)$ and $\bd(\setK_j)$ denote the cone's interior and boundary, respectively.

\section{Preliminaries}\label{sec:prelim}


The NLP reformulation of the SOCP is introduced in Section~\ref{s:reformulation}.  We review in Section~\ref{sec:localSQP} the local convergence properties of the SQP method and in Section~\ref{sec:penaltySQP} the penalty function as a means to promote convergence from any starting point.
In Section~\ref{s:optcondSOCP}, we briefly state the optimality conditions and our assumption for the SOCP~\eqref{eq:socp}.

\subsection{Reformulation as a smooth optimization problem}\label{s:reformulation}

The definition of the second-order cone in \eqref{eq:def_soc} suggests that the conic constraint \eqref{eq:socp_3} can be replaced by the nonlinear constraint
\begin{equation*}
  r_j(x_j) := \|\bar x_j\| - x_{j0} \leq 0
\end{equation*}
without changing the set of feasible points.
Consequently, \eqref{eq:socp} is equivalent to
\begin{subequations}\label{eq:nlp_r}
    \begin{align}
\min_{x\in \RR^n} \ & c^Tx\\
\text{s.t.}\ & Ax\leq b, \\
 & r_j(x_j)  \leq 0, \qquad j\in\JJJ. \label{eq:nlp_r_3}
\end{align}
\end{subequations}
%

Unfortunately, \eqref{eq:nlp_r} cannot be solved directly with standard gradient-based algorithms for nonlinear optimization, such as SQP methods.  
The reason is that $r_j$ is not differentiable whenever $\bar x_j=0$.  
This is particularly problematic when the optimal solution $x^*$ of the SOCP lies at the extreme point of a cone,  $x^*_j=0\in\KKK_j$.  
In that case, the Karush-Kuhn-Tucker (KKT) necessary optimality conditions for the NLP formulation, which are expressed in terms of derivatives, cannot be satisfied.  Therefore, any optimization algorithm that seeks KKT points cannot succeed.
As a remedy, differentiable approximations of $r_j$ have been proposed in the past; see, for example, \cite{vanderbei1998loqo}.
However, high accuracy comes at the price of high curvature, which can make finding the numerical solution of the NLP difficult. 


An alternative equivalent reformulation of the conic constraint is given by
\[
\|\bar x_j\|^2 - x_{j0}^2\leq 0 \text{ and } x_{j0}\geq 0.
\]
In this case, the constraint function is differentiable.  But if $x_j^*=0$, its gradient vanishes, and as a consequence, no constraint qualification applies and the KKT conditions do not hold.  Therefore, again, a gradient-based method cannot be employed.
By using an outer approximation of the cones that is improved in the course of the algorithm, our proposed variation of the SQP method is able to avoid these kinds of degeneracy.

To facilitate the discussion we define a point-wise partition of the cones.
\begin{definition}
Let $x\in\RR^n$. 
\begin{enumerate}
\item  We call a cone $\KKK_j$ \emph{extremal-active} at $x$, if $x_j=0$, and we denote with
$
\EEE(x) = \left\{j\in\JJJ :x_j=0 \right\}
$
the set of extremal-active cones at $x$.
\item We define the set
$
\DDD(x) = \left\{j\in\JJJ : \bar x_j\neq 0\right\}
$
as the set of all cones for which the function $r_j$ is differentiable at $x$.
\item We define the set
$
\NNN(x) = \left\{j\in\JJJ :x_j\neq0 \text{ and } \bar x_j=0\right\}
$
as the set of all cones that are not extremal-active and for which $r_j$ is not differentiable $x$.
\end{enumerate}
\end{definition}

If the set $\EEE(x^*)$ at an optimal solution $x^*$ were known in advance, we could compute $x^*$ as a solution of \eqref{eq:socp} by solving the NLP
\begin{subequations}\label{eq:equiv_nlp}
\begin{align}
\underset{x\in \RR^n}{\min} \ & c^Tx\\
\text{s.t.} \ & Ax\leq b,  \label{eq:equiv_nlp_1}\\
 & r_j(x)  \leq 0,  \quad j\in\DDD(x^*),\label{eq:equiv_nlp_3}\\
 & x_j = 0,  \hspace{0.8cm} j  \in \EEE(x^*)\label{eq:equiv_nlp_4}.
\end{align}
\end{subequations}
The constraints involving the linearization of $r_j$ are imposed only if $r_j$ is differentiable at $x^*$, and variables in cones that are extremal-active at $x^*$ are explicitly fixed to zero.
With this, locally around $x^*$, all functions in \eqref{eq:equiv_nlp} are differentiable and we could apply standard second-order algorithms to achieve fast local convergence.

In \eqref{eq:equiv_nlp}, we omitted the cones in $\NNN(x^*)$.  If $x^*$ is feasible for the SOCP and $j\in\NNN(x^*)$ we have $\bar x^*_j=0$ and $x^*_{j0}>0$, and so $r_j(x^*)<0$.  This implies that the nonlinear constraint \eqref{eq:equiv_nlp_3} for this cone is not active and we can omit it from the problem statement without impacting the optimal solution.


%

\subsection{Local convergence of SQP methods}\label{sec:localSQP}

The proposed algorithm is designed to guide the iterates $x^k$ into the neighborhood of an optimal solution $x^*$.  If the optimal solution is not degenerate and the iterates are sufficiently close to $x^*$, the steps generated by the algorithm are eventually identical to the steps that the SQP method would take for solving the differentiable optimization problem \eqref{eq:equiv_nlp}.
In this section, we review the mechanisms and convergence results of the basic SQP method \cite{nocedal2006numerical}. 

At an iterate $x^k$, the basic SQP method, applied to \eqref{eq:equiv_nlp}, computes a step $d^k$ as an optimal solution to the QP subproblem
\begin{subequations}\label{eq:sqp_subproblem}
\begin{align}
\min_{d\in\RR^n} \ & c^Td + \tfrac12 d^TH^k d\\
\text{s.t.} \ & A(x^k+d) \leq b,\\
& r_j(x_j^k) + \na  r_j(x_j^k)^Td_j  \leq 0,  \hspace{1cm}  j\in\DDD(x^*),\label{eq:sqp_subproblem_2}\\
& x_j^k + d_j=0,\hspace{3.05cm} j\in\EEE(x^*)\label{eq:sqp_subproblem_3}.
\end{align}
\end{subequations}
Here, $H^k$ is the Hessian of the Lagrangian function for \eqref{eq:equiv_nlp}, which in our case is
\begin{equation}\label{eq:sqphessian}
    H^k = \sum_{j\in\DDD(x^*)}\mu^k_j\na^2_{xx}r_j(x_j^k),
\end{equation}
where $\mu_j^k\geq 0$ are estimates of the optimal multipliers for the nonlinear constraint \eqref{eq:equiv_nlp_3},
and where $\na^2_{xx}r_j(x_j)$ is the $n\times n$ block-diagonal matrix with
\begin{equation}\label{eq:rHess}
\na^2 r_j(x_j) = 
\begin{bmatrix}
 0 &  0 \\
 0 & \frac{1}{\|\bar x_j\|}I - \frac{\bar x_j\bar x_j^T}{\|\bar x_j\|^3}
\end{bmatrix}
\end{equation}
in the rows and columns corresponding to $x_j$ for $j\in\JJJ$.
It is easy to see that $\na^2 r_j(x_j)$ is positive semi-definite.
The estimates $\mu_j^k$ are updated based on the optimal multipliers $\hat\mu_j^k\geq 0$ corresponding to \eqref{eq:sqp_subproblem_2}.

\begin{algorithm}[t]
    \caption{Basic SQP Algorithm}\label{alg:SQP}
    \begin{algorithmic}[1]
        \Require Initial iterate $x^0$ and multiplier estimates $\lam^0$, $\mu^0$, and $\eta^0$.
        \For{$k=0,1,2\cdots$}
        \State Compute $H^k$ from \eqref{eq:sqphessian}.
        \State Solve QP \eqref{eq:sqp_subproblem} to get step $d^k$ and multipliers $\hat\lambda^k$, $\hat\mu_j^k$, and $\hat\eta^k$.
        \State Set $x^{k+1}\gets x^k+ d^k$ and $\mu_j^{k+1}\gets\hat\mu_j^k$ for all $j\in\DDD(x^*)$. 
        \label{s:SQPfullstep}
        \EndFor
    \end{algorithmic}
\end{algorithm}

Algorithm~\ref{alg:SQP} formally states the basic SQP method where $\hat\lambda^k\geq 0$ and $\hat\eta^k$ denote the multipliers corresponding to \eqref{eq:equiv_nlp_1} and \eqref{eq:equiv_nlp_4}, respectively.
Because we are only interested in the behavior of the algorithm when $x^k$ is close to $x^*$, we assume here that $\bar x_j^k\neq0$ for all $j\in\DDD(x^*)$ and for all $k$, and hence the gradient and Hessian of $r_j$ can be computed.
Note that the iterates $\hat\lambda^k$ and $\hat\eta^k$ are not explicitly needed in Algorithm~\ref{alg:SQP}, but they are necessary to measure the optimality error and define the primal-dual iterate sequence that is analyzed in Theorem~\ref{thm:sqp}.

A fast rate of convergence can be proven under the following sufficient second-order optimality assumptions \cite{boggs1995sequential}.
\begin{assumption}\label{ass:sqp}
Suppose that $x^*$ is an optimal solution of the NLP \eqref{eq:equiv_nlp} with corresponding KKT multipliers $\lambda^*$, $\mu^*$, and $\eta^*$, satisfying the following properties:
\begin{enumerate}[(i)]
\item\label{ass:strictcompl} Strict complementarity holds;
\item\label{ass:licq} the linear independence constraint qualification (LICQ) holds at $x^*$, i.e., the gradients of the constraints that hold with equality at $x^*$ are linearly independent; 
\item\label{ass:sosc} the projection of the Lagrangian Hessian $H^*=\sum_{j\in\DDD(x^*)}\mu^*_j\na^2_{xx}r_j(x_j^*)$ into the null space of the gradients of the active constraints is positive definite.
\end{enumerate}
\end{assumption}

\begin{sloppypar}
Under these assumptions, the basic SQP algorithm reduces to Newton's method applied to the optimality conditions of \eqref{eq:equiv_nlp} and the following result holds \cite{nocedal2006numerical}.
\begin{theorem}\label{thm:sqp}
Suppose that Assumption~\ref{ass:sqp} holds and that the initial iterate $x^0$ and multipliers $\mu^0$ (used in the Hessian calculation) are sufficiently close to $x^*$ and $\mu^*$, respectively.  Then the iterates $(x^{k+1},\hat\lam^k,\hat\mu^k,\hat\eta^k)$ generated by the basic SQP algorithm, Algorithm \ref{alg:SQP}, converge to $(x^*,\lam^*,\mu^*,\eta^*)$ at a quadratic rate.
\end{theorem}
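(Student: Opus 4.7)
The plan is to show that, in a sufficiently small neighborhood of $(x^*,\lambda^*,\mu^*,\eta^*)$, the basic SQP iteration coincides with Newton's method applied to the KKT system of \eqref{eq:equiv_nlp}, and then to invoke the classical local convergence theory for Newton's method. First, I would write down the primal-dual KKT system of \eqref{eq:equiv_nlp} as a collection of stationarity, primal feasibility, dual feasibility, and complementarity conditions. By Assumption~\ref{ass:sqp}\ref{ass:strictcompl}, in a small enough neighborhood of $(x^*,\lambda^*,\mu^*,\eta^*)$, the constraints among \eqref{eq:equiv_nlp_1} that are inactive at $x^*$ remain inactive and their multipliers stay at zero, while the ones active at $x^*$ keep strictly positive multipliers; this lets me drop the complementarity/inequality part and work locally with a smooth square system $F(x,\lambda,\mu,\eta)=0$ whose Jacobian at the solution is the standard KKT matrix.

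Next, I would argue that when $(x^k,\mu^k)$ is close to $(x^*,\mu^*)$, the QP \eqref{eq:sqp_subproblem} identifies exactly the same active set as the NLP at $x^*$. This follows from a continuity/sensitivity argument: the unique primal-dual solution of the QP depends continuously on its data, and by strict complementarity the strict inequalities among the constraints in \eqref{eq:equiv_nlp_1} at $x^*$ persist under small perturbations; the equality constraints \eqref{eq:sqp_subproblem_3} are always imposed. Once the active set is fixed, the KKT conditions of \eqref{eq:sqp_subproblem} reduce to a linear system in $(d^k,\hat\lambda^k,\hat\mu^k,\hat\eta^k)$. A direct comparison shows this linear system is exactly the Newton equation obtained by linearizing $F$ at $(x^k,\lambda^k,\mu^k,\eta^k)$: the stationarity block gives $c+H^k d + (\text{Jacobian}^T)(\hat\lambda,\hat\mu,\hat\eta) = 0$, which is precisely the Newton correction to $\nabla_x \mathcal{L}=0$, and the other blocks reproduce the linearized constraints.

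Third, I would verify that the KKT matrix at $(x^*,\lambda^*,\mu^*,\eta^*)$ is nonsingular. Assumption~\ref{ass:sqp}\ref{ass:licq} guarantees the active-constraint Jacobian has full row rank, and Assumption~\ref{ass:sqp}\ref{ass:sosc} guarantees that $H^*$ is positive definite on the null space of this Jacobian; a standard argument then yields invertibility of the KKT matrix. Because $\bar x_j^*\neq 0$ for all $j\in\DDD(x^*)$, the functions $r_j$ are $C^2$ with Lipschitz Hessian in a neighborhood of $x^*$, so $F$ is $C^1$ with locally Lipschitz Jacobian. The Newton-Kantorovich/local convergence theorem then delivers the claimed quadratic rate for the full primal-dual iterate, and identifying $x^{k+1}=x^k+d^k$ together with the QP multipliers $(\hat\lambda^k,\hat\mu^k,\hat\eta^k)$ as the Newton update completes the proof.

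The main obstacle I anticipate is the active-set identification step: care is needed to show that for $(x^k,\mu^k)$ in a sufficiently small neighborhood of $(x^*,\mu^*)$ the QP subproblem selects exactly the active set of the NLP at $x^*$, so that the inequality-constrained QP is locally equivalent to its equality-constrained reduction. Everything else is a routine verification of the equivalence between the SQP linear system and the Newton linearization of the reduced KKT system, followed by an appeal to the standard local convergence theorem for Newton's method.
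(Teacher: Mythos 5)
Your proposal is correct and follows exactly the route the paper itself indicates: the paper gives no proof of Theorem~\ref{thm:sqp}, stating only that ``the basic SQP algorithm reduces to Newton's method applied to the optimality conditions of \eqref{eq:equiv_nlp}'' and citing the standard reference, and your sketch is precisely the textbook expansion of that remark (local active-set identification via strict complementarity, equivalence of the QP KKT system with the Newton step on the reduced KKT equations, nonsingularity of the KKT matrix from LICQ and the second-order condition, then classical Newton convergence). You also correctly single out the active-set identification step as the one place requiring a genuine sensitivity argument rather than a routine verification.
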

\end{sloppypar}

\subsection{Penalty function}\label{sec:penaltySQP}
Theorem~\ref{thm:sqp} is a local convergence result.  Practical SQP algorithms include mechanisms that make sure that the iterates eventually reach such a neighborhood, even if the starting point is far away.
To this end, we employ the exact penalty function
\begin{equation}\label{eq:merit}
\varphi(x;\rho) = c^Tx + \rho\sum_{j\in\JJJ}[r_j(x_j)]^+
\end{equation}
in which $\rho>0$ is a penalty parameter.
Note that we define $\varphi$ in terms of all conic constraints $\JJJ$, even though $r_j$ appears in \eqref{eq:equiv_nlp_3} only for $j\in\DDD(x^*)$.   We do this because the proposed algorithm does not know $\DDD(x^*)$ in advance and the violation of all cone constraints needs to be taken into account when the original problem \eqref{eq:socp} is solved.  Nevertheless, in this section, we may safely ignore the terms for $j\not\in\DDD(x^*)$ because for $j\in\EEE(x^*)$  we have $x^k_j=0$ and hence $[r_j(x^k)]^+=0$ for all $k$ due to \eqref{eq:sqp_subproblem_3}, and when $j\in\NNN(x^*)$, we have $r_j(x_j^k)<0$ when $x^k$ is close to $x^*$ since $r_j(x_j^*)<0$.

It can be shown, under suitable assumptions, that the minimizers of $\varphi(\,\cdot\,;\rho)$ over the set defined by the linear constraints \eqref{eq:equiv_nlp_1},
\begin{equation}\label{eq:setX}
X = \{x\in\RR^n : Ax\leq b\},
\end{equation}
coincide with the minimizers of \eqref{eq:equiv_nlp} when $\rho$ is chosen sufficiently large.
Because it is not known upfront how large $\rho$ needs to be, the algorithm uses an estimate, $\rho^k$, in iteration $k$, which might be increased during the course of the algorithm.

To ensure that the iterates eventually reach a minimizer of $\varphi(\,\cdot\,;\rho)$, and therefore a solution of \eqref{eq:equiv_nlp}, we require that the decrease of $\varphi(\,\cdot\,;\rho)$ is at least a fraction of that achieved in the piece-wise linear model of $\varphi(\,\cdot\,;\rho)$ given by
\begin{equation}\label{eq:merit_model}
m^k(x^k+d;\rho) = c^T(x^k+d) + \rho\sum_{j\in\DDD(x^k)}[r_j(x^k_j)+\na r_j(x^k_j)^Td_j]^+,
\end{equation}
constructed at $x^k$.
More precisely, the algorithm accepts a trial point $\hat x^{k+1}=x^k+d$ as a new iterate only if the sufficient decrease condition
\begin{align}\label{eq:armijo}
\varphi(\hat x^{k+1};\rho^k) - \varphi(x^k;\rho^k) \leq &\,c_{\text{dec}}\Big(m^k(x^k+d;\rho^k) - m^k(x^k;\rho^k)\Big)\\
\stackrel{\eqref{eq:merit_model}}{=} &\,c_{\text{dec}} \biggl(c^Td - \rho^k\sum_{j\in\DDD(x^k)}[r_j(x^k_j)]^+\biggr)\nonumber
\end{align}
holds with some fixed constant $c_{\text{dec}}\in(0,1)$.
The trial iterate $\hat x^{k+1}=x^k+d^k$ with $d^k$ computed from \eqref{eq:sqp_subproblem} might not always satisfy this condition.  The proposed algorithm generates a sequence of improved steps of which one is eventually accepted.

However, to apply Theorem~\ref{thm:sqp}, it would be necessary that the algorithm take the original step $d^k$ computed from \eqref{eq:sqp_subproblem}; see Step~\ref{s:SQPfullstep} of Algorithm~\ref{alg:SQP}.
Unfortunately, $\hat x^{k+1}=x^k+d^k$ might not be acceptable even when the iterate $x^k$ is arbitrarily close to a non-degenerate solution $x^*$ satisfying Assumption~\ref{ass:sqp} (a phenomenon called the Maratos effect \cite{maratos1978exact}).
Our remedy is to employ the second-order correction step \cite{fletcher1982second}, $s^k$, which is obtained as an optimal solution of the QP
\begin{subequations}\label{eq:sqp_soc}
\begin{align}
\min_{s\in\RR^n} \ & c^T(d^{k}+s) + \tfrac12 (d^{k}+s)^TH^k (d^{k}+s)\\
\text{s.t.} \ & A(x^k+d^{k}+s) \leq b,\label{eq:sqp_soc_2}\\
& r_j(x_j^k+d_j^{k}) + \na  r_j(x_j^k+d_j^{k})^Ts_j \leq 0,   \qquad j\in\DDD(x^*),\label{eq:sqp_soc_3}\\
& x_j^k + d^{k}_j + s_j =0,\hspace{3.55cm} j\in\EEE(x^*).\label{eq:sqp_soc_4}
\end{align}
\end{subequations}
For later reference, let $\hat\lam^{S,k}$, $\hat\mu^{S,k}$ and $\hat\eta^{S,k}$  denote optimal multiplier vectors corresponding to \eqref{eq:sqp_soc_2}--\eqref{eq:sqp_soc_4}, respectively.
The algorithm accepts the trial point $\hat x^{k+1}=x^k+d^{k}+s^k$ if it yields sufficient decrease \eqref{eq:armijo} with respect to the original SQP step $d=d^k$.
Note that \eqref{eq:sqp_soc} is a variation of the second-order correction that is usually used in SQP methods, for which \eqref{eq:sqp_soc_3} reads
\[
r_j(x_j^k+d_j^{k}) + \na  r_j(x_j^k)^T(d_j^{k}+s_j)  \leq 0,   \qquad j\in\DDD(x^*),
\]
and avoids the evaluation of $\na r_j(x_j^k+d_j^k)$.  In our setting, however, evaluating $\na r_j(x_j^k+d_j^k)$ takes no extra work and \eqref{eq:sqp_soc_3} is equivalent to a supporting hyperplane, see Section~\ref{sec:support}.
%
%
As the following theorem shows (see, e.g., \cite{fletcher1982second} or \cite[Section 15.3.2.3]{conn2000trust}), this procedure computes steps with sufficient decrease \eqref{eq:armijo} and results in quadratic convergence.
\begin{sloppypar}
\begin{theorem}\label{thm:sqp_soc}
Let Assumption~\ref{ass:sqp} hold and assume that the initial iterate $x^0$ and multipliers $\mu^0$ are sufficiently close to $x^*$ and $\mu^*$, respectively. Further suppose that $\rho^k=\rho^{\infty}$ for large $k$ where $\rho^{\infty}>\mu_j^*$ for all $j\in\DDD(x^*)$. 
\begin{enumerate}
\item Consider an algorithm that generates a sequence of iterates by setting $(x^{k+1},\lam^{k+1},\mu^{k+1},\eta^{k+1})=(x^k+d^k,\hat\lam^k,\hat\mu^k,\hat\eta^k)$ or $(x^{k+1},\lam^{k+1},\mu^{k+1},\eta^{k+1})=(x^k+d^k+s^k,\hat\lam^{S,k},\hat\mu^{S,k},\hat\eta^{S,k})$ for all $k=0,1,2,\ldots$.
Then $(x^{k},\lam^k,\mu^{k},\eta^{k})$ converges to $(x^*,\lam^*, \mu^*,\eta^*)$ at a quadratic rate. 
\item Further, for all $k$, either $\hat x^{k+1}=x^k+d^k$ or $\hat x^{k+1}=x^k+d^k+s^k$ satisfies the acceptance criterion \eqref{eq:armijo}.

\end{enumerate}
\end{theorem}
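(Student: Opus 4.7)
The plan is to combine the quadratic convergence of the pure SQP step from Theorem~\ref{thm:sqp} with the estimate $\|s^k\| = O(\|d^k\|^2)$ on the second-order correction step from \eqref{eq:sqp_soc}. This single estimate drives both conclusions: it preserves the quadratic contraction in Part~1, and it suppresses the constraint violation at the corrected point enough to overcome the Maratos phenomenon in Part~2. Throughout I would assume that $x^k$ is close enough to $x^*$ that $\bar x_j^k\neq 0$ for every $j\in\DDD(x^*)$, the constraints indexed by $\NNN(x^*)$ remain strictly inactive, and the optimal active sets of both \eqref{eq:sqp_subproblem} and \eqref{eq:sqp_soc} coincide with that of the NLP \eqref{eq:equiv_nlp} at $x^*$.

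First I would prove $\|s^k\| = O(\|d^k\|^2)$. Standard QP sensitivity for the strongly convex subproblem \eqref{eq:sqp_subproblem}, applicable by parts (ii) and (iii) of Assumption~\ref{ass:sqp}, gives $\|d^k\|=O(\|x^k-x^*\|)$, while Theorem~\ref{thm:sqp} yields $\|x^k+d^k-x^*\|=O(\|x^k-x^*\|^2)$. A Taylor expansion of $r_j$ about $x^k_j$ together with \eqref{eq:sqp_subproblem_2} and the boundedness of $\na^2 r_j$ near $x^*$ shows $r_j(x^k_j+d^k_j)=O(\|d^k\|^2)$ for every $j\in\DDD(x^*)$, so the right-hand-side data in \eqref{eq:sqp_soc_2}--\eqref{eq:sqp_soc_4} is all $O(\|d^k\|^2)$. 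Because \eqref{eq:sqp_soc} inherits the LICQ and second-order structure of \eqref{eq:sqp_subproblem}, a second application of QP sensitivity gives $\|s^k\|=O(\|d^k\|^2)=O(\|x^k-x^*\|^2)$, together with analogous Lipschitz bounds on $\hat\lam^{S,k}$, $\hat\mu^{S,k}$, and $\hat\eta^{S,k}$.

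Part~1 then follows by a short induction: for either update rule, $\|x^{k+1}-x^*\|\le\|x^k+d^k-x^*\|+\|s^k\|=O(\|x^k-x^*\|^2)$, and the multiplier iterates inherit the analogous quadratic bound, since they are Lipschitz perturbations of those produced by the Newton system underlying Theorem~\ref{thm:sqp}. For Part~2 I would follow the classical SOC analysis of Fletcher and of Conn, Gould, and Toint \cite{fletcher1982second,conn2000trust}. Set $\Delta^k:=\rho^k\sum_{j\in\DDD(x^k)}[r_j(x^k_j)]^+ - c^T d^k\ge 0$, so that \eqref{eq:armijo} with trial point $\hat x^{k+1}$ becomes $\varphi(\hat x^{k+1};\rho^k)-\varphi(x^k;\rho^k)\le -c_{\text{dec}}\Delta^k$. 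The KKT conditions of \eqref{eq:sqp_subproblem}, together with SOSC and the gap $\rho^\infty>\mu_j^*$, deliver a uniform lower bound $\Delta^k\ge c_\Delta\|d^k\|^2$ for $k$ large. Expanding the actual change for the uncorrected step gives $-\Delta^k+\rho^k\sum_j[r_j(x^k_j+d^k_j)]^+=-\Delta^k+O(\|d^k\|^2)$, which can fail \eqref{eq:armijo}---the Maratos effect. After the correction, \eqref{eq:sqp_soc_3} combined with $\|s^k\|=O(\|d^k\|^2)$ yields $r_j(x^k_j+d^k_j+s^k_j)\le O(\|s^k\|^2)=O(\|d^k\|^4)$, so the residual penalty term is $o(\|d^k\|^2)$; a Taylor expansion along $s^k$ exploiting the KKT relation of \eqref{eq:sqp_soc} absorbs the $c^T s^k$ contribution into the curvature of the quadratic model and produces $\varphi(x^k+d^k+s^k;\rho^k)-\varphi(x^k;\rho^k)=-\Delta^k+o(\|d^k\|^2)\le -c_{\text{dec}}\Delta^k$ for all large $k$, which is the required inequality.

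The main obstacle is the sensitivity estimate $\|s^k\|=O(\|d^k\|^2)$: rigorously it requires showing \eqref{eq:sqp_soc} is feasible for $k$ large, identifying its optimal active set with that of \eqref{eq:sqp_subproblem} via strict complementarity, and bounding the Lipschitz dependence of its solution on the $O(\|d^k\|^2)$ perturbation of the constraint data. A secondary delicate point is the Taylor manipulation that absorbs $c^T s^k$ (which at face value is only $O(\|s^k\|)=O(\|d^k\|^2)$, the same order as $\Delta^k$) into higher-order terms by exploiting the KKT identity of \eqref{eq:sqp_soc}; once this is done, the lower bound $\Delta^k\ge c_\Delta\|d^k\|^2$ and the sufficient-decrease inequality are routine consequences of Assumption~\ref{ass:sqp} and the classical SQP-with-SOC framework.
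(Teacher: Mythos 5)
The paper does not prove Theorem~\ref{thm:sqp_soc} at all: it is quoted as a known result with pointers to \cite{fletcher1982second} and \cite[Section 15.3.2.3]{conn2000trust}, and your sketch is a faithful reconstruction of exactly that classical second-order-correction argument (the estimate $\|s^k\|=O(\|d^k\|^2)$, preservation of the quadratic Newton contraction, and the merit-function accounting that defeats the Maratos effect). Your outline is consistent with the cited proofs, and you correctly flag the two genuinely delicate steps --- the QP sensitivity bound for \eqref{eq:sqp_soc} and the absorption of the $c^Ts^k$ term, which in the standard treatments is handled by comparing $\varphi(x^k+d^k+s^k;\rho)$ and $\varphi(x^k;\rho)$ to $\varphi(x^*;\rho)$ via the second-order growth of the exact penalty function rather than by a direct order count --- so nothing further is needed beyond executing those details as in the references.
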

\end{sloppypar}


\subsection{Optimality conditions for SOCP}\label{s:optcondSOCP}
The proposed algorithm aims at finding an optimal solution of the SOCP~\eqref{eq:socp}, or equivalently, values of the primal variables, $x^*\in\RR^n$, and the dual variables, $\lam^*\in\RR^m$ and $z_j^*\in\RR^{n_j}$ for $j\in\JJJ$, that satisfy the necessary and sufficient optimality conditions \cite[Theorem 16]{alizadeh2003second}
\begin{subequations}\label{eq:opt_socp}
\begin{align}
c + A^T\lam^* - z^* & = 0,\label{eq:opt_socp_1}\\
A x^* - b \leq 0 & \perp\lam^* \geq 0,  \label{eq:opt_socp_2} \\
\KKK_j \ni x_j^* & \perp z_j^* \in \KKK_j, \qquad j\in\JJJ. \label{eq:opt_socp_3}
\end{align}
\end{subequations}
A thorough discussion of SOCPs is given in the comprehensive review by Alizadeh and Goldfarb \cite{alizadeh2003second}. 
The authors consider the formulation in which the linear constraints \eqref{eq:socp_2} are equality constraints, but the results in \cite{alizadeh2003second} can be easily extended to inequalities.

The primal-dual solution $(x^*,\lam^*,z^*)$ is unique under the following assumption.
\begin{assumption}\label{ass:socpnondegen}
$(x^*,\lam^*,z^*)$ is a non-degenerate primal-dual solution of the SOCP \eqref{eq:socp} at which strict complementarity holds. 
\end{assumption}

The definition of non-degeneracy for SOCP is somewhat involved and we refer the reader to \cite[Theorem 21]{alizadeh2003second}.
Strict complementarity holds if $x_j^*+z_j^*\in\interior(\KKK_j)$ and implies that:
(i) $x^*_j\in\interior(K_j) \Longrightarrow z^*_j=0$; (ii) $z^*_j\in\interior(K_j) \Longrightarrow x^*_j=0$; (iii) $x^*_j\in\bd(K_j)\setminus\{0\} \Longleftrightarrow z^*_j\in\bd(K_j)\setminus\{0\}$; and (iv) not both $x^*_j$ and $z_j^*$ are zero.

\section{Algorithm}\label{sec:alg}

The proposed algorithm solves the NLP formulation \eqref{eq:nlp_r} using a variation of the SQP method.  Since the functional formulation of the cone constraints \eqref{eq:nlp_r_3} might not be differentiable at all iterates or at an optimal solution, the cones are approximated by a polyhedral outer approximation using supporting hyperplanes.

The approximation is done so that the method implicitly identifies the constraints that are extremal-active at an optimal solution $x^*$, i.e., $\EEE(x^*)=\EEE(x^k)$ for large $k$.
More precisely, we will show that close to a non-degenerate optimal solution, the steps generated by the proposed algorithm are identical to those computed by the QP subproblem \eqref{eq:sqp_subproblem} for the basic SQP algorithm for solving \eqref{eq:equiv_nlp}.  Consequently, fast local quadratic convergence is achieved, as discussed in Section~\ref{sec:localSQP}.

\subsection{Supporting hyperplanes}\label{sec:support}

In the following, consider a particular cone $\KKK_j$ and let $\YYY_j$ be a finite subset of 
$\{y_j\in\RR^{n_j}:\bar y_j\neq0, y_{j0}\geq 0\}$.  
We define the cone
\begin{equation}\label{eq:CCC_j}
\CCC_j(\YYY_j) = \left\{ x_j\in\RR^{n_j} : x_{j0}\geq 0 \text{ and }\na  r_j(y_j)^Tx_j\leq 0 \text{ for all }y_j\in\YYY_j\right\}
\end{equation}
generated by the points in $\YYY_j$.
For each $x_j\in\KKK_j$ we have $r_j(x_j)\leq0$, and using
\begin{equation}\label{eq:na_r}
\na r_j(x_j) = \biggl(-1, \frac{\bar x_j^T}{\|\bar x_j\|}\biggr)^T,
\end{equation}
we obtain for any $y_j\in\YYY_j$ that
\[
\na r_j(y_j)^Tx_j = \frac{1}{\|\bar y_j\|}\bar y_j^T\bar x_j - x_{j0} \leq \frac{1}{\|\bar y_j\|}\|\bar y_j\|\|\bar x_j \|- x_{j0} = r_j(x_j)\leq 0.
\]
Therefore $\CCC_j(\YYY_j)\supseteq\KKK_j$. 
Also, for $y_j\in\YYY_j$, consider 
$
x_j = (1,\bar y_j^T/\|\bar y_j\|)^T.
$
Then
\[
\na r_j(y_j)^Tx_j = \frac{\bar y_j^T}{\|\bar y_j\|}\frac{\bar y_j}{\|\bar y_j\|} - 1 = 1- 1= 0,
\]
and also
$
r_j(x_j) = \|\bar x_j\| - x_{j0} = {\bar y_j}/{\|\bar y_j\|} -1 = 0.
$
Hence $x_j\in\CCC_j(\YYY_j)\cap\KKK_j$.  
Therefore, for any $y_j\in\YYY_j$, the inequality
\begin{equation}\label{eq:hyperplane}
	\na  r_j(y_j)^Tx_j\leq 0
\end{equation}
defines a hyperplane that supports $\KKK_j$ at $(1, {\bar y_j}/{\norm{\bar y_j}})$.  In summary, $\CCC_j(\YYY_j)$ is a polyhedral outer approximation of $\KKK_j$, defined by supporting hyperplanes.

In addition, writing $\YYY_j=\{y_{j,1},\ldots,y_{j,m}\}$,
we also define the cone
\begin{equation}\label{eq:Cdual}
\CCC_j^\circ(\YYY_j) := \left\{ - \sum_{l=1}^{m}\sigma_{j,l}\na r_j(y_{j,l}) + \eta_j e_{j0} : \sigma_j\in\RR^{m}_+, \eta_j \geq 0\right\}.
\end{equation}
For all $x_j\in\CCC_j(\YYY_j)$ and $z_j=- \sum_{l=1}^{m}\sigma_{j,l}\na r_j(y_{j,l}) + \eta_j e_{j0} \in \CCC_j^\circ(\YYY_j)$, we have
\[
x_j^Tz_j = - \sum_{l=1}^{m}\sigma_{j,l}\na r_j(y_{j,l})^Tx_j + \eta_j x_{j0}\geq 0
\]
because $\na r_j(y_{j,l})^Tx_j\leq 0$ and $x_{j0}\geq 0$ from the definition of $\CCC_j(\YYY_j)$.
Therefore $\CCC_j^\circ(\YYY_j)$ is included in the dual of the cone $\CCC_j(\YYY_j)$.

Now define $R=[-\na r_j(y_{j,1}),\ldots,-\na r_j(y_{j,m}),e_{j0}]$ and let $z_j\in\RR^{n_j}$ be in the dual of $\CCC_j(\YYY_j)$.  Since this implies that $x_j^Tz_j\geq 0$ for all $x\in\CCC_j(\YYY_j)=\{\RR^{n_j}:R^Tx_j\geq0\}$, Farkas' lemma yields that $z_j=R\cdot(\sigma^T,\eta)^T$ for some $\sigma_j\in\RR^{m}_+$ and $\eta_j \geq 0$, i.e., $z_j\in\CCC_j^\circ(\YYY_j)$.

Overall we proved that $\CCC_j^\circ(\YYY_j)$ defined in \eqref{eq:Cdual} is the dual of $\CCC_j(\YYY_j)$, and since $\CCC_j(\YYY_j)\supseteq\KKK_j$, this implies $\CCC^\circ_j(\YYY_j)\subseteq\KKK_j$.



\subsection{QP subproblem}

In each iteration, at an iterate $x^k$, the proposed algorithm computes a step $d^k$ as an optimal solution of the subproblem
\begin{subequations}\label{eq:qp_Y}
\begin{align}
\min_{d\in\RR^n} \ & c^Td + \tfrac12 d^TH^k d\\
\text{s.t.} \ & A(x^k+d) \leq b, \label{eq:qp_Y_2}\\
& r_j(x_j^k) + \na  r_j(x_j^k)^Td_j  \leq 0,   \qquad j\in\DDD(x^k), \label{eq:qp_Y_3}\\
& x_j^k + d_j \in \CCC_j(\YYY^k_j),\hspace{12.75ex} j\in\JJJ.\label{eq:qp_Y_5}
\end{align}
\end{subequations}
Here, $H^k$ is a positive semi-definite matrix that captures the curvature of the nonlinear constraint \eqref{eq:nlp_r_3}, and for each cone, $\YYY^k_j$ is the set of hyperplane-generating points that have been accumulated up to this iteration.
From \eqref{eq:CCC_j}, we see that \eqref{eq:qp_Y_5} can be replaced by linear constraints.  Consequently, \eqref{eq:qp_Y} is a QP and can be solved as such.

\begin{algorithm}[t]
    \caption{Preliminary SQP Algorithm}\label{alg:basic_primal}
    \begin{algorithmic}[1]
        \Require Initial iterate $x^0$ and sets $\YYY_j^0$ for $j\in\JJJ$.
        \For{$k=0,1,2\cdots$}
        \State Choose $H^k$.
        \State Solve subproblem \eqref{eq:qp_Y} to get step $d^k$. 
        \State Set $x^{k+1}\gets x^k+ d^k$. 
        \State Set $\YYY_j^{k+1}\gets \YYY_{pr,j}^+(\YYY^k_j,x^k_j)$ for $j\in\JJJ$. \label{s:Ypr} 
        \EndFor
    \end{algorithmic}
\end{algorithm}

Algorithm~\ref{alg:basic_primal} describes a preliminary version of the proposed SQP method based on this subproblem.
Observe that the linearization \eqref{eq:qp_Y_3} can be rewritten as
\begin{align*}
0 & \geq r_j(x_j^k) + \na  r_j(x_j^k)^Td_j = \|\bar x^k_j\| - x^k_{j0} - d_{j0} + \frac{(\bar x^k_j)^T\bar d_j}{\|\bar x^k_j\|} \\ 
   & = \frac{1}{\|\bar x^k_j\|}(\bar x^k_j)^T(\bar x^k_j+\bar d_j) - (x^k_{j0}+d_{j0}) 
= 
\na r_j(x^k_j)^T(x^k_j+d_j) 
\end{align*}
and is equivalent to the hyperplane constraint generated at $x_j^k$.
Consequently, if $x^k_j\not\in\KKK_j$, then $r_j(x^k_j)>0$ and \eqref{eq:qp_Y_3} acts as a cutting plane that excludes $x^k_j$.
Using the update rule
\begin{equation}\label{eq:YYYplus_pr}
\YYY^+_{pr,j}(\YYY_j,x_j) =
 \begin{cases}
  \YYY_j\cup \{x_j\} & \text{ if } \bar x_j\neq 0 \text{ and } r_j(x_j)>0, \\
  \YYY_j & \text{ otherwise,}
 \end{cases}
\end{equation}
in Step~\ref{s:Ypr} makes sure that $x^k_j$ is excluded in all future iterations.

In our algorithm, we initialize $\YYY_j^0$ so that 
\begin{equation}\label{eq:Y0}
\YYY_j^0  \supseteq \hat\YYY^0_j:=\{e_{ji} : i=1,\ldots,n_j-1\} \cup \{-e_{ji} : i=1,\ldots,n_j-1\}.
\end{equation}
In this way, $x_j=0$ is an extreme point of $\CCC_j(\YYY_j^0)$, as it is for $\KKK_j$, and the challenging aspect of the cone is already captured in the first subproblem.
By choosing the coordinate vectors $e_{ji}$ we have
$
\na r_j(e_{ji})^Tx_j = x_{ji} - x_{j0},
$
and the hyperplane constraint \eqref{eq:hyperplane} becomes a very sparse linear constraint.  

When $H^k=0$ in each iteration, this procedure becomes the standard cutting plane algorithm for the SOCP \eqref{eq:socp}.
%
%
It is well-known that the cutting plane algorithm is convergent in the sense that every limit point of the iterates is an optimal solution of the SOCP \eqref{eq:socp}, 
but the convergence is typically slow.
In the following sections, we describe how Algorithm~\ref{alg:basic_primal} is augmented to achieve fast local convergence.  The full method is stated formally in Algorithm~\ref{alg:full}.

\subsection{Identification of extremal-active cones}\label{sec:identification}

We now describe a strategy that enables our algorithm to identify those cones that are extreme-active at a non-degenerate solution $x^*$ within a finite number of iterations, i.e., $\EEE(x^k)=\EEE(x^*)$ for all large $k$.
This will make it possible to apply a second-order method and achieve quadratic local convergence.

Consider the optimality conditions for the QP subproblem~\eqref{eq:qp_Y}:
\begin{subequations}
\begin{align}
c + H^kd^k + A^T\hat \lam^k + \sum_{j\in\DDD(x^k)}\hat\mu_j^k\na_x r_j(x^k) - \hat \nu^k & = 0, \label{eq:opt_pqp_1} \\
A(x^k+d^k)-b \leq 0  &\perp \hat\lam^k\geq 0 \label{eq:opt_pqp_2}, \\
r_j(x_j^k) + \na r_j(x_j^k)^T d_j^k \leq 0 &\perp \hat\mu_j\geq 0, \hspace{1.2cm} j\in\DDD(x^k),\label{eq:opt_pqp_3}\\
\CCC_j(\YYY_j^k)\ni x^k_j+d_j^k & \perp \hat \nu_j^k\in \CCC^\circ_j(\YYY_j^k),\quad j\in\JJJ\label{eq:opt_pqp_4} .
\end{align}
\end{subequations}
%
%
Here, $\hat\lam^k$, $\hat\mu^k_j$, and $\hat \nu_j^k$ are the multipliers corresponding to the constraints in \eqref{eq:qp_Y}; for completeness, we define $\hat\mu_j^k=0$ for $j\in\JJJ\setminus\DDD(x^k)$.
In \eqref{eq:opt_pqp_1}, $\na_x r_j(x^k)$ is the vector in $\RR^n$ that contains $\na r_j(x_j^k)$ in the elements corresponding to $x_j$ and is zero otherwise.  Similarly, $\hat\nu^k\in\RR^n$ is equal to $\hat\nu^k_j$ in the elements corresponding to $x_j$ for all $j\in\JJJ$ and zero otherwise.

Let us define
\begin{equation}\label{eq:hatYregular}
\hat\YYY_j^k := 
\begin{cases}
\YYY_j^k\cup\{x^k_j\}, & \text{ if } j\in\DDD(x^k), \\ 
\YYY_j^k, & \text{ if } j\in\JJJ\setminus\DDD(x^k).
\end{cases}
\end{equation}
It is easy to verify that, for $j\in\DDD(x^k)$, $\na r_j(x_j^k)x_j^k=r_j(x_j^k)$ and hence
$
r_j(x_j^k)^T(x_j^k+d^k) \leq 0
$
from \eqref{eq:opt_pqp_3}.  As a consequence we obtain $x^k_j+d_j^k\in\CCC_j(\hat\YYY_j^k)$ for all $j\in\JJJ$.  Furthermore, $\hat\nu_j^k\in\CCC^\circ_j(\YYY_j^k)$ implies that 
\[
\hat\nu_j^k = - \sum_{l=1}^{m}\sigma^k_{j,l}\na r_j(y^k_{j,l}) + \eta^k_j e_{j0} 
\]
for suitable values of $\sigma^k_{j,l}\geq0$ and $\eta^k_j\geq0$.  
Then $\hat z_j^k:= -\hat \mu_j^k \nabla r_j(x^k) + \hat \nu_j^k\in\CCC^\circ_j(\hat\YYY_j^k)$ and
\begin{equation}\label{eq:hatz}
\hat z^k=c + H^kd^k + A^T\hat \lam^k
\end{equation}
from \eqref{eq:opt_pqp_1}.
In conclusion, if $(d,\hat\lam^k, \hat\mu^k, \hat \nu^k)$ is a primal-dual solution of the QP subproblem \eqref{eq:qp_Y}, then $(d,\hat\lam^k, \hat z^k)$ satisfies the conditions
\begin{subequations}\label{eq:opt_qp}
\begin{align}
c + H^kd^k + A^T\hat \lam^k - \hat z^k & = 0, \label{eq:opt_qp_1} \\
A(x^k+d^k)-b \leq 0  &\perp \hat\lam^k\geq 0  \label{eq:opt_qp_2}, \\
\CCC_j(\hat\YYY_j^k)\ni x^k_j+d_j^k & \perp \hat z_j^k\in \CCC^\circ_j(\hat\YYY_j^k), \quad j\in\JJJ, \label{eq:opt_qp_4} 
\end{align}
\end{subequations}
which more closely resembles the SOCP optimality conditions \eqref{eq:opt_socp}.  Our algorithm maintains primal-dual iterates $(x^{k+1},\hat\lam^k,\hat z^k)$ that are updated based on \eqref{eq:opt_qp}.



Suppose that strict-complementarity holds at a primal-dual solution $(x^*,\lam^*,z^*)$ of the SOCP \eqref{eq:socp} and that $(x^{k+1},\hat\lam^k,\hat z^k)\to(x^*,\lam^*,z^*)$.
If $j\not\in\EEE(x^*)$ then $x_j^*\in\KKK_j$ implies $x^*_{j0}>0$.  As $x^k_j$ converges to $x^*_j$, we have $x^k_{j0}>0$ and therefore $j\not\in\EEE(x^k)$ for sufficiently large $k$.  This yields $\EEE(x^k)\subseteq\EEE(x^*)$.
We now derive a modification of Algorithm~\ref{alg:basic_primal} that ensures that $\EEE(x^*)\subseteq\EEE(x^k)$ for all sufficiently large $k$ under Assumption~\ref{ass:socpnondegen}.

Consider any $j\in\EEE(x^*)$.  We would like to have
\begin{equation}\label{eq:Ckhat}
\hat z_j^k\in \interior(\CCC^\circ_j(\hat\YYY_j^k))
\end{equation}
for all large $k$, since then complementarity in \eqref{eq:opt_qp_4} implies that $x_j^{k+1}=x_j^k+d^k_j=0$ and hence $j\in\EEE(x^{k+1})$ for all large $k$.
We will later show that Assumption~\ref{ass:socpnondegen} implies that $\hat z_j^k \to z^*_j$ and that there exists a neighborhood $N_\epsilon(z^*_j)
= \{z_j\in\RR^{n_j}: \|z_j-z^*_j\|\leq\epsilon\}
$ 
of $z^*_j$ so that $z_j\in \interior(\CCC^\circ_j(\hat\YYY_j^0\cup\{-y_j\}))$ if $z_j,y_j\in N_\epsilon(z^*_j)$; see Remark~\ref{rem:zstartinterior}.
This suggests that some vector close to $-z_j^*$ should eventually be included in $\hat\YYY_j^k$ because then \eqref{eq:Ckhat} holds when $\hat z_j^k$ is close enough to $z^*_j$.
For this purpose, the algorithm computes 
\begin{equation*}
\check z^k = c + A^T\hat\lam^k, 
\end{equation*}
which also converges to $z^*_j$ (see \eqref{eq:opt_socp_1}), and 
sets $\YYY^{k+1}_j$ to $\YYY^+_{du,j}(\YYY^k_j,x^k_j,\check z^k_j)$, where
\begin{equation}\label{eq:YYYplus_du}
\YYY^+_{du,j}(\YYY_j,x_j,z_j) =
 \begin{cases}
  \YYY_j\cup \{-z_j\} & \text{ if } x_j\neq 0, \bar z_j\neq0 \text{ and } r_j(z_j)<0, \\
  \YYY_j & \text{ otherwise.}
 \end{cases}
\end{equation}
The update is skipped when $x^k_j=0$ (because then $j$ is already in $\EEE(x^k)$ and no additional hyperplane is needed), and  when $\bar{\check z}^k_j=0$ or $r_j(\check z^k_j)\geq0$, which might indicate that $z^*_j\not\in\interior(\KKK_j)$ and $j\not\in\EEE(x^*)$.
\Footnote{\red{Xinyi: as a reader, I am confused about why here use $\check z^k$ but lots of effort as spending for explaining $\hat z^k$, the sign should be $-H^k d^k$ }  Is this better?  I'm cheating here a bit because I'm not distinguishing whether $\bar z_j^*=0$ or not.  But since this is just motivation, I'm OK with that.}

\subsection{Fast NLP-SQP steps}\label{sec:fastSQPsteps}

Now that we have a mechanism in place that makes sure that the extremal-active cones are identified in a finite number of iterations%
, 
we present a strategy that emulates the basic SQP Algorithm~\ref{alg:SQP} and automatically takes quadratically convergent SQP steps, i.e., solutions of the SQP subproblem \eqref{eq:sqp_subproblem}, close to $x^*$.
For the discussion in this section, we again assume that $x^*$ is a unique solution at which Assumption~\ref{ass:socpnondegen} holds.

Suppose that $\EEE(x^k)=\EEE(x^*)$ for large $k$ due to the strategy discussed in Section~\ref{sec:identification}.  This means that the outer approximation \eqref{eq:qp_Y_5} of $\KKK_j$ for $j\in\EEE(x^*)$ is sufficient to fix $x_j^{k}$ to zero and is therefore equivalent to the constraint \eqref{eq:sqp_subproblem_3} in the basic SQP subproblem.  However, \eqref{eq:qp_Y} includes the outer approximations for all cones, including those for $j\not\in\EEE(x^*)$, which are not present in \eqref{eq:sqp_subproblem}.  Consequently, the desired SQP step from \eqref{eq:sqp_subproblem} might not be feasible for \eqref{eq:qp_Y}.

As a remedy, at the beginning of an iteration, the algorithm first computes an NLP-SQP step as an optimal solution $d^{S,k}$ of a relaxation of \eqref{eq:qp_Y}, 
\begin{subequations}\label{eq:newton_qp}
\begin{align}
\min_{d\in\RR^n} \ & c^Td + \tfrac12 d^TH^k d\\
\text{s.t.} \ & A(x^k+d) \leq b\\
& r_j(x_j^k) + \na  r_j(x_j^k)^Td_j  \leq 0,   \qquad j\in\DDD(x^k)\label{eq:newton_qp3}\\
& x_{j0}^k+d_{j0}  \geq 0,   \hspace{2.5cm} j\in\DDD(x^k)\setminus\hat\EEE^k\label{eq:newton_qp4}\\
& x_j^k + d_j \in \CCC_j(\YYY^k_j)\hspace{2.1cm}j\in\hat\EEE^k,
\end{align}
\end{subequations}
where $\hat\EEE^k=\EEE(x^k)$.
In this way, the outer approximations are imposed only for the currently extremal-active cones, while for all other cones only the linearization \eqref{eq:newton_qp3} is considered, just like in \eqref{eq:sqp_subproblem}, with the additional restriction \eqref{eq:newton_qp4} that ensure $x^{k+1}_{j0}\geq0$.
Let $\hat\lam^k$, $\hat\mu_j^k$, $\hat\eta^k_j$, and $\hat\nu^k_j$ be the optimal  corresponding to the constraints in \eqref{eq:newton_qp} (set to zero for non-existing constraints) and define $\hat z^k$ as in \eqref{eq:hatz}. Then the optimality conditions \eqref{eq:opt_qp} hold again, this time with $d^k=d^{S,k}$, but instead of \eqref{eq:hatYregular} we have
\begin{equation}\label{eq:hatYSQP}
\hat\YYY_j^k := 
\begin{cases}
\{x^k_j\} & \text{ if } j\in\DDD(x^k)\setminus\hat\EEE^k, \\ 
\YYY_j^k\cup\{x^k_j\}& \text{ if } j\in\hat\EEE^k\cap\DDD(x^k), \\ 
\YYY_j^k & \text{ if } j\in\hat\EEE^k\setminus\DDD(x^k).
\end{cases}
\end{equation}



When $x^k$ is not close to $x^*$ and $\EEE(x^*)\neq\EEE(x^k)$, QP \eqref{eq:newton_qp} might result in poor steps that go far outside of $\KKK_j$ for some $j\in\DDD(x^k)\setminus\hat\EEE^k$ and undermine convergence.
Therefore, we iteratively add more cones to $\hat\EEE^k$ until 
\begin{equation}\label{eq:hatEEEcondition}
x_{j0}^k+d_{j0}^{S,k}>0 \text{ only for } j\in\JJJ\setminus\hat\EEE^k, 
\end{equation}
i.e., when a cone is approximated only by its linearization \eqref{eq:newton_qp3}, the step does not appear to target its extreme point. This property is necessary to show that $\EEE(x^k)=\EEE(x^*)$ for all large $k$ also for the case that new iterates are computed from \eqref{eq:newton_qp} instead of \eqref{eq:qp_Y}. 
Note that in the extreme case $\hat\EEE^k=\JJJ$ and \eqref{eq:newton_qp} is identical to \eqref{eq:qp_Y}.
This loop can be found in Steps \ref{s:whileEEE}--\ref{e:whileEEE} in Algorithm~\ref{alg:full}.

Since there is no guarantee that  \eqref{eq:newton_qp} yields iterates that converge to $x^*$, the algorithm discards the NLP-SQP step in certain situations and falls back to the original method to recompute a new step from \eqref{eq:qp_Y}, as in the original method.  In Section~\ref{s:penalty} we describe how we use the exact penalty function \eqref{eq:merit} to determine when this is necessary.

\subsection{Hessian matrix}

Motivated by \eqref{eq:sqphessian}, we compute the Hessian matrix $H^k$ in \eqref{eq:qp_Y} and \eqref{eq:newton_qp} from
\begin{equation}\label{eq:Hessian}
    H^k = \sum_{j\in\DDD(x^k)}\mu^k_j\na^2_{xx}r_j(x^k),
\end{equation}
where $\mu_j^k\geq 0$ are multiplier estimates for the nonlinear constraint \eqref{eq:nlp_r_3}.
Because $\na^2r_j(x_j^k)$ is positive semi-definite and $\mu_j^k\geq0$, also $H^k$ is positive semi-definite.

In the final phase, when we intend to emulate the basic SQP Algorithm~\ref{alg:SQP}.  Therefore, we set $\mu_j^{k+1}=\hat\mu^k_j$ for $j\in\DDD(x^k)$, where $\hat\mu_j^k$ are the optimal multipliers for \eqref{eq:newton_qp3}, when the fast NLP-SQP step was accepted.
But we also need to define a value for $\mu_j^{k+1}$ when the step is computed from \eqref{eq:qp_Y} where, in addition to the linearization of $r_j$, hyperplanes \eqref{eq:qp_Y_5} are used to approximate all cones.
By comparing the optimality conditions of the QPs \eqref{eq:qp_Y} and \eqref{eq:sqp_subproblem}  we now derive an update for $\mu_j^{k+1}$.

Suppose that $j\in\DDD(x^{k+1})\cap\DDD(x^k)$. Then \eqref{eq:opt_pqp_1} yields
\begin{equation}\label{eq:hkmudis}
c_j + H_{jj}^kd^k_j + A_j^T\hat \lam^k + \hat\mu_j^k\na r_j(x_j^k) - \hat \nu^k_j= 0,
\end{equation}
where $H^k_{jj} = \mu_j^k\na^2r_j(x^k_j)$ because of \eqref{eq:Hessian}.
Here, the dual information for the nonlinear constraint is split into $\hat\mu_j^k$ and $\hat \nu^k_j$ and needs to be condensed into a single number, $\mu_j^{k+1}$, so that we can compute $H^k$ from \eqref{eq:Hessian} in the next iteration.

Recall that, in the basic SQP Algorithm~\ref{alg:SQP}, the new multipliers $\mu_j^{k+1}$ are set to the optimal multipliers of the QP \eqref{eq:sqp_subproblem}, which satisfy
\begin{equation}\label{eq:hkmudis2}
c_j + H_{jj}^kd_j^k + A_j^T\hat\lam^k + \mu_j^{k+1}\na r_j(x_j^k) = 0.
\end{equation}
A comparison with \eqref{eq:hkmudis} suggests to choose $\mu_j^{k+1}$ so that
$
 \mu_j^{k+1}\na r_j(x_j^k) \approx \hat\mu_j^k\na r_j(x_j^k) - \hat \nu^k_j.
$
Multiplying both sides with $\na r_j(x^k_j)^T$ and solving for $\mu^{k+1}$ yields
\begin{equation*}
\mu^{k+1}_j = \hat \mu_j^k - \frac{\na r_j(x_j^k)^T\hat \nu^k_j}{\|\na r_j(x_j^k)\|^2}.
\end{equation*}
Note that $ \mu_j^{k+1} = \hat \mu_j^k$ if the outer approximation constraint \eqref{eq:qp_Y_5} is not active and therefore $\hat\nu_j^k=0$ for $j$. In this case, we recover the basic SQP update, as desired.

Now suppose that $j\in\DDD(x^{k+1})\setminus\DDD(x^k)$. Again comparing \eqref{eq:hkmudis} with \eqref{eq:hkmudis2} suggests a choice so that
$
 \mu_j^{k+1}\na r_j(x_j^{k+1}) \approx - \hat \nu^k_j,
$
where we substituted $\na r_j(x^k_j)$ by $\na r_j(x^{k+1}_j)$ because the former is not defined for $j\not\in\DDD(x^k)$.
In this case, multiplying both sides with $\na r_j(x^{k+1}_j)^T$ and solving for $\mu^{k+1}$ yields
\begin{equation*}
\mu^{k+1}_j = - \frac{\na r_j(x_j^{k+1})^T\hat \nu^k_j}{\|\na r_j(x_j^{k+1})\|^2}.
\end{equation*}
In summary, in each iteration in which \eqref{eq:qp_Y} determines the new iterate, we update 
\begin{equation}\label{eq:mu_update}
\mu^{k+1}_j = \begin{cases}
 \hat \mu_j^k - \frac{\na r_j(x_j^k)^T\hat \nu^k_j}{\|\na r_j(x_j^k)\|^2} & j\in\DDD(x^{k+1})\cap\DDD(x^k)\\[3ex]
 - \frac{\na r_j(x_j^{k+1})^T\hat \nu^k_j}{\|\na r_j(x_j^{k+1})\|^2} & j\in\DDD(x^{k+1})\setminus\DDD(x^k) \\
 0 &\text{otherwise.}
\end{cases}
\end{equation}

The choice above leads to quadratic convergence for non-degenerate instances, but it is common for the global convergence analysis of SQP methods to permit any positive semi-definite Hessian matrix $H^k$, as long as it is bounded.  Since we were not able to exclude the case that $\mu_j^k$ or $1/x_{j0}^k$ are unbounded for some cone $j\in\JJJ$, in which case $H^k$ defined in \eqref{eq:Hessian} is unbounded, we fix a large threshold $c_H>0$ and rescale the Hessian matrix according to
\begin{equation}\label{eq:Hrescale}
H_k \gets H_k \cdot \min\{1, c_H/\|H_k\|\}
\end{equation}
so that $\|H_k\|\leq c_H$ in every iteration.
In this way, global convergence is guaranteed, but the fast local convergence rate might be impaired if $c_H$ is chosen too small so that $H^k$ defined in \eqref{eq:Hessian} must be rescaled.  Therefore, in practice, we set $c_H$ to a very large number and \eqref{eq:Hrescale} is never actually triggered in our numerical experiments.

\subsection{Penalty function}\label{s:penalty}

The steps computed from \eqref{eq:qp_Y} and \eqref{eq:newton_qp} do not necessarily yield a convergent algorithm and a safeguard is required to force the iterates into a neighborhood of an optimal solution.
Here, we utilized the exact penalty function \eqref{eq:merit} and accept a new iterate only if the sufficient decrease condition \eqref{eq:armijo} holds.

As discussed in Section~\ref{sec:fastSQPsteps}, at the beginning of an iteration, the algorithm first computes an NLP-SQP step $d^{S,k}$ from \eqref{eq:newton_qp}.  The penalty function can now help us to decide whether this step makes sufficient progress towards an optimal solution, and we only accept the trial point $\hat x^{k+1}=x^k+d^{S,k}$ as a new iterate if \eqref{eq:armijo} holds with $d=d^{S,k}$.

If the penalty function does not accept $d^{S,k}$, there is still a chance that $d^{S,k}$ is making rapid progress towards the solution, but, as discussed in Section~\ref{sec:localSQP}, the Maratos effect is preventing the acceptance of $d^{S,k}$. 
As a remedy, we compute, analogously to \eqref{eq:sqp_soc}, a second-order correction step $s^k$ for \eqref{eq:newton_qp} as a solution of
\begin{equation}\label{eq:newton_soc}
\begin{aligned}
\min_{s\in\RR^n} \ & c^T(d^{S,k}+s) + \tfrac12 (d^{S,k}+s)^TH^k (d^{S,k}+s)\\
\text{s.t.} \ & A(x^k+d^{S,k}+s) \leq b,\\
& r_j(x_j^k+d_j^{S,k}) + \na  r_j(x_j^k+d_j^{S,k})^Ts_j  \leq 0,   \qquad j\in\DDD(x^k),\\
& x_{j0}^k + d_{j0}^{S,k} + s_{j0} \geq 0,  \hspace{3.55cm} j\in\DDD(x^k)\setminus\hat\EEE^k,\\
& x_j^k + d^{S,k}_j + s_j \in \CCC_j(\YYY^k_j),\hspace{3.0cm} j\in\hat\EEE^k,
\end{aligned}
\end{equation}
and accept the trial point $\hat x^{k+1} = x^k + d^{S,k}+s^k$ if it satisfies \eqref{eq:armijo} with $d=d^{S,k}$.
Let again $\hat\lam^k$, $\hat\mu_j^k$, $\hat\eta^k_j$, and $\hat\nu^k_j$ denote the optimal multipliers in \eqref{eq:newton_soc} and define $\hat z^k$ as in \eqref{eq:hatz}.
The optimality conditions \eqref{eq:opt_qp} still hold, this time with $d^k=d^{S,k}+s^k$ and
\begin{equation}\label{eq:hatYSOC}
\hat\YYY_j^k := 
\begin{cases}
\{x^k_j+d_j^{S,k}\}, & \text{ if } j\in\DDD(x^k)\setminus\hat\EEE^k, \\ 
\YYY_j^k\cup \{x^k_j+d_j^{S,k}\}, & \text{ if } j\in\DDD(x^k)\cap \hat\EEE^k, \\ 
\YYY_j^k, & \text{ if } j\in\hat\EEE^k.
\end{cases}
\end{equation}
%

If neither $d^{S,k}$ nor $d^{S,k}+s^k$ has been accepted, we give up on fast NLP-SQP steps and instead revert to QP \eqref{eq:qp_Y} which safely approximates every cone with an outer approximation.
However, the trial point $\hat x^{k+1}=x^k+d^k$ with the step $d^k$ obtained from \eqref{eq:qp_Y} does not necessarily satisfy \eqref{eq:armijo}.  In that case, the algorithm adds $x^k+d^k$ to $\YYY_j^k$ to cut off $x^k+d^k$ and resolves \eqref{eq:qp_Y} to get a new trial step $d^k$.  In an inner loop (Steps~\ref{s:begininner}--\ref{s:endinner}), this procedure is repeated until, eventually, a trial step is obtained that satisfies \eqref{eq:armijo}.
We will show that \eqref{eq:armijo} holds after a finite number of iterations of the inner loop.

It remains to discuss the update of the penalty parameter estimate $\rho^k$. 
One can show (see Lemma~\ref{lem:modeldiff}) that an optimal solution of $x^*$ of the SOCP with multipliers $z^*$ is a minimizer of $\phi(\;\cdot,\rho)$ over the set $X$ defined in \eqref{eq:setX} if $\rho>\|z^*_{\JJJ,0}\|_{\infty}$, where $z^*_{\JJJ,0}=(z^*_{1,0},\ldots,z^*_{p,0})^T$.
Since $z^*$ is not known \textit{a priori}, the algorithm uses the update rule $\rho^k = \rho_{\text{new}}(\rho^{k-1},z^k)$ where
\begin{equation}\label{eq:rhoupdate}
	\rho_{\text{new}}(\rho_{\text{old}},z):=\begin{cases}
	\rho_{\text{old}} & \text{ if } \rho_{\text{old}} > \|z_{\JJJ,0}\|_\infty \\
	c_{\text{inc}}\cdot\|z_{\JJJ,0}\|_\infty & \text{ otherwise,}
	\end{cases}
\end{equation}
with $c_{\text{inc}}>1$. We will prove in Lemma~\ref{lem:zbounded} that the sequence $\{z^k\}_{k=1}^\infty$ is bounded under Slater's constraint qualification.  Therefore, this rule will eventually settle at a final penalty parameter $\rho^\infty$ that is not changed after a finite number of iterations.

During an iteration of the algorithm, several trial steps may be considered and a preliminary parameter value is computed from \eqref{eq:rhoupdate} for each one.  At the end of the iteration, the parameter value corresponding to the accepted trial step is stored.
Note that the acceptance test for the second-order correction step from \eqref{eq:newton_soc} needs to be done with the penalty parameter computed for the regular NLP-SQP step from \eqref{eq:newton_qp}.

\subsection{Complete algorithm}
\label{s:complete}

\begin{algorithm}[!h]
    \caption{SQP Algorithm for SOCP.}\label{alg:full}
    \begin{algorithmic}[1]
        \Require Initial iterate $x^0\in X$ with $x_{j,0}\geq0$, multipliers $\mu_j^0\in\RR_+$, penalty parameter $\rho^{-1}>0$;  constants $c_{\text{dec}}\in (0,1)$, $c_{\text{inc}}>1$, and $c_H>0$.
        \State Initialize $\YYY_j^0$ so that \eqref{eq:Y0} is satisfied.
        \For{$k=0,1,2,\ldots$}
        \State Compute $H^k$ using \eqref{eq:Hessian}. Rescale according to \eqref{eq:Hrescale} if $\|H^k\|>c_H$.\label{s:calcH}
        \State Set $\hat\EEE^k\gets\EEE(x^k)$.
        \State Compute $d^{S,k}$, $\hat\lam^k$, $\hat\mu^{k}$, $\hat z^{k}$ from \eqref{eq:newton_qp} and \eqref{eq:hatz} and set $\hat x^{k+1} = x^k + d^{S,k}$. \label{s:newton}
        \While{$\{j\in \JJJ: x_{j0}^k+d^{S, k}_{j0} = 0\}\not\subseteq\hat \EEE^k $}\label{s:whileEEE}
        \State Set $\hat \EEE^k \gets \hat \EEE^k \cup \{j\in \JJJ: x_{j0}^k+d^{S, k}_{j0} = 0\}$. \label{s:whileEEE1}
        \State Recompute $d^{S,k}$, $\hat\lam^k$, $\hat\mu^{k}$, $\hat z^{k}$ from \eqref{eq:newton_qp} and \eqref{eq:hatz} and set $\hat x^{k+1} = x^k + d^{S,k}$. 
        \EndWhile \label{e:whileEEE}
        \State Compute candidate penalty parameter $\rho^{k}=\rho_\text{new}(\rho^{k-1},\hat z^k)$, see \eqref{eq:rhoupdate}.
        \If{\eqref{eq:armijo} holds for $d=d^{S,k}$}\label{s:testNewton}
        \State Set $\YYY^{k+1}_j\gets \YYY^+_{pr,j}(\YYY^{k}_j,x_j^{k})$ using \eqref{eq:YYYplus_pr} and set $d^k=d^{S,k}$.
        \State Set $\mu^{k+1}=\hat \mu^k$ and go to Step~\ref{s:updateYdual}.
        \EndIf
        \State Compute $s^k$, $\hat\lam^k$, $\hat\mu^{k}$, $\hat z^{k}$ from \eqref{eq:newton_soc} and \eqref{eq:hatz} and set $\hat x^{k+1} = x^k + d^{S,k} +s^k$. \label{s:soc} 
        \If{\eqref{eq:armijo} holds for $d=d^{S,k}$ and $\{j\in \JJJ: x_{j0}^k+d^{S, k}_{j0} + s^k= 0\}\subseteq\hat \EEE^k $}  \label{s:testSOC}
        \State Set $\YYY^{k+1}_j\gets\YYY^+_{pr,j}(\YYY^{k}_j,x_j^{k})$  and $d^k=d^{S,k}$.
        \State Set $\mu^{k+1}=\hat \mu^k$ and go to Step~\ref{s:updateYdual}.
        \EndIf \label{s:exit_soc}
        \State Set $\YYY_j^{k,0}\gets\YYY^k_j$.
        \For{$l=0,1,2,\ldots$} \label{s:begininner}
        \State Compute $d^{k,l}$, $\hat\lam^k$, $\hat\mu^k$, $\hat z^k$ from \eqref{eq:qp_Y} and \eqref{eq:hatz} and set $\hat x^{k+1} = x^k + d^{k,l}$.\label{s:regularstep}
        \State Compute candidate penalty parameter $\rho^{k}=\rho_\text{new}(\rho^{k-1},\hat z^k)$.
        \If{\eqref{eq:armijo} holds for $d=d^{k,l}$}\label{s:testinner}
        \State Set $\YYY_j^{k+1}\gets \YYY^+_{pr,j}(\YYY^{k,l}_j,x_j^{k})$ and $d^{k} = d^{k,l}$.\label{s:defdR}
        \State Go to Step~\ref{s:updatemu}.
        \EndIf
        \State Set $\YYY_j^{k+1}\gets \YYY^+_{pr,j}(\YYY^{k,l}_j,\hat x_j^{k+1})$, see \eqref{eq:YYYplus_pr}.\label{s:updateYpr}
        \State Compute $\check z^{k} = c + A^T\hat \lam^k$.
        \State Update $\YYY_j^{k,l+1}\gets \YYY^+_{du,j}(\YYY^{k,l+1}_j,\hat x_j^{k+1},\check z^k_j)$, see \eqref{eq:YYYplus_du}.\label{s:updateYinner}
        \EndFor\label{s:endinner}
        \State Compute $\mu^{k+1}$ from \eqref{eq:mu_update}.\label{s:updatemu}
        \State Compute $\check z^{k} = c + A^T\hat \lam^k$ and update $\YYY_j^{k+1}\gets \YYY^+_{du,j}(\YYY^{k+1}_j,x_j^k,\check z^k_j)$.\label{s:updateYdual}
        \State Set $x^{k+1}\gets \hat x^{k+1}$.\label{s:updatex}
        \State If $(x^{k+1},\hat\lam^{k},\hat z^{k})$ satisfy \eqref{eq:opt_socp}, \textbf{stop}.\label{s:stop}
        \EndFor
    \end{algorithmic}
\end{algorithm}

The complete method is stated in Algorithm~\ref{alg:full}.  To keep the notation concise, we omit ``for all $j\in\JJJ$'' whenever the index $j$ is used. We assume that all QPs in the algorithm are solved exactly.

Each iteration begins with the computation of the fast NLP-SQP step where an inner loop repeatedly adds cones to $\hat\EEE^k$ until \eqref{eq:hatEEEcondition} holds.
If the step achieves a sufficient decrease in the penalty function, the trial point is accepted.
Otherwise, the second-order correction for the NLP-SQP step is computed and accepted if it yields a sufficient decrease for the NLP-SQP step.
Note that the second-order correction step is discarded if it does not satisfy \eqref{eq:hatEEEcondition} since otherwise finite identification of $\EEE(x^*)$ cannot be guaranteed.
If none of the NLP-SQP steps was acceptable, the algorithm proceeds with an inner loop in which hyperplanes cutting off the current trial point are repeatedly added until the penalty function is sufficiently decreased.
No matter which step is taken, both $x_j^k$ and $\check z_j^k$ are added to $\YYY_j^k$ according to the update rules \eqref{eq:YYYplus_pr} and \eqref{eq:YYYplus_du} and the multiplier $\mu^k$ for the nonlinear constraints is updated.
%


In most cases, a new QP is obtained by adding only a few constraints to the most recently solved QP, and a hot-started QP solver will typically compute the new solution quickly.
For example, in each inner iteration in Steps~\ref{s:whileEEE}--\ref{e:whileEEE}, hyperplanes for the polyhedral outer approximation for cones augmenting $\hat\EEE^k$ are added to QP \eqref{eq:newton_qp}.
Similarly, each inner iteration in Steps~\ref{s:begininner}--\ref{s:endinner} adds one cutting plane for a violated cone constraint.
In Steps~\ref{s:newton} and \ref{s:soc}, some constraints are removed compared to the most recently solved QP, but also this structure could be utilized.

The algorithm might terminate because one of QPs solved for the step computation is infeasible. Since the feasible regions of the QP are outer approximations of the SOCP~\eqref{eq:socp}, this proves that the SOCP instance is infeasible; see also Remark~\ref{rem:infeasible}.

\section{Convergence analysis}\label{sec:theory}

\subsection{Global convergence}

In this section, we prove that, under a standard regularity assumption, all limit points of the sequence of iterates are optimal solutions of the SOCP, if the algorithm does not terminate with an optimal solution in Step~\ref{s:stop}. We also explore what happens when the SOCP is infeasible.

We make the following assumption throughout this section.
\begin{assumption}
\label{as:socp_bounded} \label{ass:boundedness}
The set $X$ defined in \eqref{eq:setX} is bounded.
\end{assumption}

Since $x^0\in X$ by the initialization of Algorithm~\ref{alg:full} and any step satisfies \eqref{eq:opt_pqp_2}, we have $x^k\in X$ for all $k$.
Similarly, \eqref{eq:opt_qp_4} and \eqref{eq:CCC_j} imply that
\begin{equation}\label{eq:x0feasible}
x^{k}_{j0} \geq 0 \text{ for all } k\geq0 \text{ and } j\in\JJJ.
\end{equation}
%

We start the analysis with some technical results that quantify the decrease in the penalty function model.
\begin{lemma}\label{lem:modeldiff}
Consider an iteration $k$ and let $d^k$ be computed in Step~\ref{s:newton} or Step~\ref{s:regularstep} in Algorithm~\ref{alg:full}.
Further let $\rho^k>\rho_{\min}^k$, where $\rho_{\min}^k=\|\hat z_{\JJJ,0}^k\|_\infty$ with $\hat z^k$ defined in \eqref{eq:hatz}.  Then the following statements are true.
\begin{enumerate}[(i)]
\item We have
\begin{equation*}
m^k(x^{k}+d^k;\rho^k) - m^k(x^k;\rho^k)\leq -(d^k)^TH^kd^k -(\rho^k-\rho_{\min}^k)\sum_{j\in\JJJ}[r_j(x^k_j)]^+\leq 0.
\end{equation*}\label{en:model_decrease}
\item If $x^k$ is not an optimal solution of the SOCP, then
\begin{equation}\label{eq:modeldiffnegative}
m^k(x^{k}+d^k;\rho^k) - m^k(x^k;\rho^k) < 0.
\end{equation}\label{en:modeldiffnegative}
\end{enumerate}
\end{lemma}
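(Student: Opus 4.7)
The plan is to compute the model difference explicitly and then manipulate it using the optimality conditions \eqref{eq:opt_qp} that hold for both step types (the NLP-SQP step from \eqref{eq:newton_qp} and the regular step from \eqref{eq:qp_Y}).

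For part (i), I would first observe that both subproblems impose $r_j(x_j^k)+\na r_j(x_j^k)^T d_j^k\le 0$ on $j\in\DDD(x^k)$, so $[r_j(x_j^k)+\na r_j(x_j^k)^T d_j^k]^+=0$ for such $j$. Also, for $j\notin\DDD(x^k)$, $\bar x_j^k=0$ combined with \eqref{eq:x0feasible} gives $r_j(x_j^k)=-x_{j0}^k\le 0$, so these $j$ contribute zero to $\sum_{j\in\JJJ}[r_j(x_j^k)]^+$. Hence
\begin{equation*}
m^k(x^k+d^k;\rho^k)-m^k(x^k;\rho^k) = c^Td^k - \rho^k\sum_{j\in\JJJ}[r_j(x_j^k)]^+.
\end{equation*}
Next I would eliminate $c^Td^k$ via \eqref{eq:opt_qp_1}, writing $c^Td^k=-(d^k)^TH^kd^k-(\hat\lam^k)^T Ad^k+(\hat z^k)^Td^k$. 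The linear-constraint complementarity \eqref{eq:opt_qp_2} together with $Ax^k\le b$ gives $(\hat\lam^k)^T Ad^k=(\hat\lam^k)^T(b-Ax^k)\ge 0$, and the conic complementarity in \eqref{eq:opt_qp_4} yields $(\hat z^k)^Td^k=-(\hat z^k)^Tx^k$.

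The key inequality is the bound on $-(\hat z_j^k)^Tx_j^k$. Since $\hat z_j^k\in\CCC_j^\circ(\hat\YYY_j^k)\subseteq\KKK_j$, we have $\hat z_{j,0}^k\ge\|\bar{\hat z}_j^k\|\ge 0$, and from \eqref{eq:x0feasible} also $x_{j,0}^k\ge 0$. Cauchy--Schwarz then yields
\begin{equation*}
-(\hat z_j^k)^Tx_j^k = -\hat z_{j,0}^k x_{j,0}^k - (\bar{\hat z}_j^k)^T\bar x_j^k \le \hat z_{j,0}^k(\|\bar x_j^k\|-x_{j,0}^k) = \hat z_{j,0}^k\, r_j(x_j^k) \le \rho_{\min}^k\,[r_j(x_j^k)]^+,
\end{equation*}
where the last step uses $\hat z_{j,0}^k\ge 0$, $\hat z_{j,0}^k\le\rho_{\min}^k$, and the trivial bound $r_j\le[r_j]^+$. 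Summing over $j$ and combining yields $c^Td^k\le -(d^k)^TH^kd^k+\rho_{\min}^k\sum_{j\in\JJJ}[r_j(x_j^k)]^+$, which plugged into the model difference gives the claim in (i); positive semi-definiteness of $H^k$ and $\rho^k>\rho_{\min}^k$ make the final expression non-positive.

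For part (ii), the case $\sum_{j\in\JJJ}[r_j(x_j^k)]^+>0$ is immediate since $\rho^k-\rho_{\min}^k>0$ makes the second term strictly negative. The remaining (and more delicate) case is when $x^k$ is feasible but not optimal for the SOCP. Here I would argue that $d=0$ is feasible for both QP \eqref{eq:qp_Y} and QP \eqref{eq:newton_qp}: the linear constraints hold by $x^k\in X$, $r_j(x_j^k)\le 0$ takes care of \eqref{eq:qp_Y_3}/\eqref{eq:newton_qp3}, $x_{j,0}^k\ge 0$ handles \eqref{eq:newton_qp4}, and $x_j^k\in\KKK_j\subseteq\CCC_j(\YYY_j^k)$ handles the outer-approximation constraint. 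If $d=0$ were optimal, the KKT conditions \eqref{eq:opt_qp} at $d=0$ with $\hat z_j^k\in\CCC_j^\circ(\hat\YYY_j^k)\subseteq\KKK_j$ would exactly be the SOCP optimality conditions \eqref{eq:opt_socp}, contradicting non-optimality of $x^k$. Hence the QP objective at $d^k$ is strictly less than at $d=0$, i.e., $c^Td^k+\tfrac12(d^k)^TH^kd^k<0$, so $c^Td^k<0$, which makes the model difference (equal to $c^Td^k$ in this feasible case) strictly negative. The main obstacle will be this last KKT-equivalence step, where one must carefully check that $d=0$ is indeed feasible and that the multipliers carried over from the QP satisfy the SOCP conditions.
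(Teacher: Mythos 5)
Your proposal is correct and follows essentially the same route as the paper's proof: compute the model difference using the linearized-constraint feasibility, eliminate $c^Td^k$ via the stationarity condition \eqref{eq:opt_qp_1}, bound the two complementarity terms (the paper expands $\hat z_j^k$ in terms of the hyperplane generators of $\CCC_j^\circ(\hat\YYY_j^k)$, whereas you invoke the already-established inclusion $\CCC_j^\circ(\hat\YYY_j^k)\subseteq\KKK_j$ plus Cauchy--Schwarz, which gives the identical estimate $-(\hat z_j^k)^Tx_j^k\le \hat z_{j0}^k[r_j(x_j^k)]^+$), and then handle part (ii) by the same case split, with the feasible case resolved by observing that $d=0$ satisfies the QP constraints and that optimality of $d=0$ would transfer the QP multipliers to the SOCP KKT conditions. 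Your phrasing of the feasible case as ``$d=0$ is not optimal, hence the optimal value is strictly negative'' is the contrapositive of the paper's contradiction argument (which assumes zero model decrease and deduces $H^kd^k=0$ and $c^Td^k=0$), but the content is the same and both are valid.
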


\begin{proof}
Proof of (\ref{en:model_decrease}): 
Consider any $j\in\DDD(x^k)$. 
Because $d^k$ is a solution of \eqref{eq:qp_Y} or \eqref{eq:newton_qp}, there exist $\hat\lam^k$ and $\hat z^k$ so that the optimality conditions \eqref{eq:opt_qp} hold.
Let $j\in\JJJ$.  Since $\hat z_j^k\in\CCC^\circ(\hat\YYY_j^k)$, the definition \eqref{eq:Cdual} implies that $$\hat z^k_j=- \sum_{l=1}^{m_j^k}\hat\sigma^k_{l,j}\na r_j(y_{j,l}^{k}) + \hat\eta^k_j e_{j0},$$
where $
\hat\YYY^k_j=\left\{y^{k}_{j,1},\ldots,y^{k}_{j,m_j^k}\right\}
$
and $\hat\sigma^k_{l,j}, \hat\eta^k_j\in\RR_+$.

Using \eqref{eq:na_r} we have
$
\hat z_{j0}^k = \sum_{l=1}^{m_j^k}\hat\sigma^k_{l,j} + \hat\eta^k_j\geq  \sum_{l=1}^{m_j^k}\hat\sigma^k_{l,j}.
$
Together with $(x_j^k+d^k_j)^T\hat z_j^k=0$ from \eqref{eq:opt_qp_4} and $(\bar x^k_j)^T\bar y^k_{l,j}\leq\|\bar x^k_j\|\cdot\|\bar y^k_{l,j}\|$ this overall yields
\begin{align*}
-(d_j^k)^T\hat z_j^k &= (x_j^k)^T\hat z_j^k  = x^k_{j0} \hat z_{j0}^k
-\sum_{l=1}^{m_j^k}\hat\sigma^k_{l,j}(\bar x^k_j)^T\frac{\bar y^k_{l,j}}{\|\bar y^k_{l,j}\|}
\geq x^k_{j0} \hat z_{j0}^k - \sum_{l=1}^{m_j^k}\hat\sigma^k_{l,j}\|\bar x_j^k\|\nonumber\\
&\geq x^k_{j0} \hat z_{j0}^k - \hat z^k_{j0}\|\bar x_j^k\| = -z^k_{j0}r_j(x_j^k) \geq -z^k_{j0}[r_j(x_j^k)]^+.\nonumber
\end{align*}
Further, we have from \eqref{eq:opt_qp_2} that
$
0 = (Ax^k+Ad^k-b)^T\hat\lam^k
$ 
and therefore $(d^k)^TA^T\hat\lam^k = -(Ax^k-b)^T\hat\lam^k\geq0$ since $\hat\lam^k\geq0$ and $x^k\in X$.

Using these inequalities and \eqref{eq:opt_qp_1}, the choice of $\rho^k_{\min}$ yields
\begin{align*}
0&=(d^k)^T\left(c + H^kd^k + A^T\hat \lam^k - \hat z^k\right) \\
&\geq c^Td^k + (d^k)^TH^kd^k - \sum_{j\in\JJJ}\hat z^k_{j0}[r_j(x_j^k)]^+\\
&\geq c^Td^k + (d^k)^TH^kd^k - \rho^k_{\min}\sum_{j\in\JJJ}[r_j(x_j^k)]^+.
\end{align*}
Finally, combining this with \eqref{eq:merit_model} and \eqref{eq:qp_Y_3} or \eqref{eq:newton_qp3}, respectively, we obtain
\begin{align*}
m^k(x^{k}+d^k;\rho^k) - m^k(x^k;\rho^k) & = c^Td^k - \rho^k\sum_{j\in\DDD(x^k)}[r_j(x_j^k)]^+ \\
 & = c^Td^k - \rho^k\sum_{j\in\JJJ}[r_j(x_j^k)]^+\\
& \leq -(d^k)^TH^kd^k - (\rho^k-\rho^k_{\min})\sum_{j\in\JJJ}[r_j(x_j^k)]^+.
\end{align*}
For the second equality, we used that $r_j(x_j^k) = 0 - x^k_{j0} \leq 0$  for $j\not\in\DDD(x^k)$ by \eqref{eq:x0feasible} and the definition of $\DDD(x^k)$.
Since $H^k$ is positive semi-definite, $\rho^k>\rho^k_{\min}$, and $[r_j(x^k_j)]^+\geq0$, the right-hand side must be non-positive.

%

Proof of (\ref{en:modeldiffnegative}):
Suppose $x^k\in X$ is not an optimal solution for the SOCP.
If $x^k$ is not feasible for the SOCP, $x^k$ must violate a conic constraint and we have $[r_j(x_j^k)]^+>0$ for some $j\in\JJJ$.  Since $H^k$ is positive semidefinite and $\rho^k-\rho^k_{\min}>0$,
part (\ref{en:model_decrease}) yields \eqref{eq:modeldiffnegative}.

It remains to consider the case when $x^k$ is feasible for the SOCP, i.e., $[r_j(x^k_j)]^+=0$ for all $j$.  To derive a contradiction, suppose that \eqref{eq:modeldiffnegative} does not hold.  Then part (\ref{en:model_decrease}) yields
\begin{align*}
0 &= m^k(x^k+d^k;\rho^k) - m^k(x^k;\rho^k)\\
&=-(d^k)^TH^kd^k - (\rho^k-\rho^k_{\min})\sum_{j\in\JJJ}[r_j(x_j^k)]^+ =-(d^k)^TH^kd^k\leq0.
\end{align*}
Because $H^k$ is positive semi-definite, this implies $H^kd^k=0$.
Further, since also 
\begin{equation*}
0 = m^k(x^k+d^k;\rho^k) - m^k(x^k;\rho^k) \stackrel{\eqref{eq:merit_model}}{=} c^Td^k - \rho^k\sum_{j\in\DDD(x^k)}[r_j(x^k_j)]^+ = c^Td^k,
\end{equation*}
the optimal objective value of \eqref{eq:qp_Y} or \eqref{eq:newton_qp}, respectively, is zero.  At the same time, choosing $d^k=0$ is also feasible for \eqref{eq:qp_Y} or \eqref{eq:newton_qp} and yields the same objective value.  Therefore, also $d^k=0$ is an optimal solution of \eqref{eq:qp_Y} or \eqref{eq:newton_qp} and the optimality conditions \eqref{eq:opt_qp} hold for some multipliers.  Because $\CCC^\circ_j(\hat\YYY_k^k)\subseteq\KKK_j$, the same multipliers and $d^k=0$ show that the optimality conditions of the SOCP \eqref{eq:opt_socp} also hold.  So, $x^k$ is an optimal solution for the SOCP, contradicting the assumption.
\end{proof}

The following lemma shows that the algorithm is well-defined and will not stay in an infinite loop in Steps~\ref{s:begininner}--\ref{s:endinner}.
\begin{lemma}\label{lem:modeldecrease}
Consider an iteration $k$ and let $d^k$ be computed in Step~\ref{s:newton} or Step~\ref{s:regularstep} in Algorithm~\ref{alg:full}. Suppose that $x^k$ is not an optimal solution of the SOCP.  Then
\begin{equation}\label{eq:suffdecdlt}
\varphi(x^k+d^{k,l}; \rho^k) -  \varphi(x^k; \rho^k)\leq c_{\text{dec}}\Big(m^k(x^k+d^{k,l};\rho^k) - m^k(x^k;\rho^k)\Big)
\end{equation}
after a finite number of iterations in the inner loop in Steps~\ref{s:begininner}--\ref{s:endinner}.
\end{lemma}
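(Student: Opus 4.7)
The plan is to argue by contradiction. Suppose the inner loop in Steps~\ref{s:begininner}--\ref{s:endinner} never terminates, so an infinite sequence $\{d^{k,l}\}_{l\geq 0}$ is generated and \eqref{eq:suffdecdlt} fails at every $l$. Since $x^k+d^{k,l}\in X$ and $X$ is bounded by Assumption~\ref{ass:boundedness}, I may extract a subsequence $d^{k,l_i}\to d^*$, refining if necessary so that $\hat z^{k,l_i}$ and hence the iteration-dependent values of $\rho^k$ and $\rho^k_{\min}$ produced by~\eqref{eq:rhoupdate} also converge. The remainder of the proof has two parts: (i) $x^k+d^*$ is feasible for every cone; (ii) the common limit of the actual and model decreases along the subsequence is strictly negative. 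Together these force~\eqref{eq:suffdecdlt} to hold at some large $l_i$, contradicting the standing assumption.

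For part~(i), I treat each cone $j\in\JJJ$ separately. If primal cuts for cone $j$ are added at infinitely many inner iterations, then every subsequent step satisfies $\na r_j(x^k_j+d^{k,l_i}_j)^T(x^k_j+d^{k,l'}_j)\leq 0$ for $l'>l_i$; passing to the limit and using the homogeneity identity $\na r_j(y)^Ty=r_j(y)$ (valid when $\bar y\neq 0$; the case $\bar y=0$ is covered by $x^k_{j0}+d^*_{j0}\geq 0$, which follows from $\CCC_j$-feasibility) yields $r_j(x^k_j+d^*_j)\leq 0$. If only finitely many cuts are added for cone $j$, the update rule~\eqref{eq:YYYplus_pr} forces $r_j(x^k_j+d^{k,l}_j)\leq 0$ for all sufficiently large $l$, and continuity delivers the same bound at $d^*$.

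For part~(ii) the key identity
\begin{equation*}
\Delta^k_\varphi(d)-c_{\text{dec}}\,\Delta^k_m(d) = (1-c_{\text{dec}})\,\Delta^k_m(d) + \rho^k\sum_{j\in\JJJ}[r_j(x^k_j+d_j)]^+
\end{equation*}
follows by direct computation once one notes that the linearization terms in $m^k(x^k+d;\rho^k)$ vanish under $[\,\cdot\,]^+$ thanks to~\eqref{eq:qp_Y_3}, so that $\Delta^k_m(d)=c^Td-\rho^k\sum_j[r_j(x^k_j)]^+$. From part~(i) the rightmost sum converges to zero along the subsequence, and it remains to show that $\Delta^*:=\lim_i\Delta^k_m(d^{k,l_i})<0$. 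When $x^k$ is infeasible for the SOCP, Lemma~\ref{lem:modeldiff}(i) gives $\Delta^k_m(d^{k,l})\leq -(\rho^k-\rho^k_{\min})\sum_j[r_j(x^k_j)]^+$, which stays strictly negative in the limit because~\eqref{eq:rhoupdate} keeps $\rho^k-\rho^k_{\min}$ bounded away from zero. When $x^k$ is SOCP-feasible but not optimal, I introduce the ``limit QP'' obtained from~\eqref{eq:qp_Y} by replacing each outer approximation $\CCC_j(\YYY^{k,l}_j)$ with the true cone~$\KKK_j$. Since its feasible set $F^*$ is contained in every inner-loop feasible set $F_l$, the QP values $v_l$ are non-decreasing and bounded above by the optimal value $v^{**}$ of the limit QP; combined with $d^*\in F^*$ this forces $v_l\to v^{**}$ and makes $d^*$ optimal for the limit QP. The argument from the proof of Lemma~\ref{lem:modeldiff}(ii) then applies: $\Delta^*=0$ would make $d=0$ optimal as well, and the resulting multipliers---after absorbing the linearization multipliers into the cone multipliers via $-\na r_j(x^k_j)\in\KKK_j$---would satisfy the SOCP KKT system~\eqref{eq:opt_socp} at $x^k$, contradicting non-optimality.

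Combining both parts, $\Delta^k_\varphi(d^{k,l_i})-c_{\text{dec}}\,\Delta^k_m(d^{k,l_i})\to(1-c_{\text{dec}})\Delta^*<0$, so~\eqref{eq:suffdecdlt} must hold for all sufficiently large $l_i$, a contradiction. The main technical obstacle will be the limit-QP step in the second case: justifying that the relaxations ``converge from outside'' to the true feasible set so that $d^*$ inherits optimality, and then reworking the multiplier manipulation of Lemma~\ref{lem:modeldiff}(ii) inside the genuine cone $\KKK_j$ rather than its polyhedral outer approximation.
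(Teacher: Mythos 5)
Your proposal is correct in its overall architecture, and its first half is essentially the paper's proof: the paper likewise argues by contradiction, extracts a limit point $d^{k,\infty}$ of the inner-loop trial steps, and shows $[r_j(x_j^k+d_j^{k,\infty})]^+=0$ for every cone by exactly your mechanism --- the cut added at inner iteration $l_t$ forces $\na r_j(x_j^k+d_j^{k,l_t})^T(x_j^k+d_j^{k,l_{t+1}})\leq 0$, and passing to the limit with $\na r_j(v)^Tv=r_j(v)$ gives cone feasibility (the paper handles $\bar x_j^k+\bar d_j^{k,\infty}=0$ separately, as you do).

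Where you genuinely diverge is the second half. The paper finishes in two lines: it writes the ratio of actual to predicted decrease, observes that numerator and denominator differ only by $\rho^k\sum_j[r_j(x_j^k+d_j^{k,l})]^+\to 0$, invokes Lemma~\ref{lem:modeldiff}(\ref{en:modeldiffnegative}) only to say the denominator is nonzero, and concludes the ratio tends to $1>c_{\text{dec}}$. You instead decompose $\Delta_\varphi^k-c_{\text{dec}}\Delta_m^k=(1-c_{\text{dec}})\Delta_m^k+\rho^k\sum_j[\cdot]^+$ and prove that $\lim_i\Delta_m^k(d^{k,l_i})$ is \emph{strictly} negative, via the infeasible/feasible case split and a limit QP over the true cones. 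This buys something real: the paper's ``ratio converges to $1$'' implicitly requires the denominator to be bounded away from zero, which is immediate only when $x^k$ is SOCP-infeasible; in the feasible case $\Delta_m^k(d^{k,l})=c^Td^{k,l}$ could a priori vanish in the limit, and your limit-QP argument is precisely what rules that out. The price is the obstacle you yourself flag: deducing SOCP optimality of $x^k$ from $d=0$ being optimal for the \emph{conic} limit QP requires KKT conditions to hold there, which (unlike for the polyhedral subproblems the paper works with) is not automatic without a constraint qualification. A cleaner way to close that step is to stay with the polyhedral KKT systems \eqref{eq:opt_qp} satisfied by each $d^{k,l_i}$, note $\CCC_j^\circ(\hat\YYY_j^{k,l_i})\subseteq\KKK_j$, and pass to the limit in those (which needs boundedness of the multipliers, as in Lemma~\ref{lem:zbounded}); this mirrors the multiplier absorption already carried out in the paper's proof of Lemma~\ref{lem:modeldiff}(\ref{en:modeldiffnegative}). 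With that repair your argument is a strictly more careful version of the paper's.
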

\begin{proof}

Suppose the claim is not true and let $\{d^{k,l}\}_{l=0}^{\infty}$ be the infinite sequence of trial steps generated in the loop in Steps \ref{s:begininner}--\ref{s:endinner} for which the stopping condition in Step~\ref{s:testinner} is never satisfied, and let $d^{k,\infty}$ be a limit point of  $\{d^{k,l}\}_{l=0}^{\infty}$. 
We will first show that
\begin{equation}\label{eq:rpluszero}
[r_j(x_j^k+ d_j^{k,\infty})]^+=0  \text{ for all } j\in\JJJ.
\end{equation}

Let us first consider the case when $\bar x_j^k+\bar d_j^{k,\infty}=0$ for some $j\in\JJJ$. Then $r_j(x_j^k+d_j^{k,\infty}) = \|\bar x_j^k+\bar d_j^{k,\infty}\| - (x_{j0}^k+d_{j0}^{k,\infty}) = - (x_{j0}^k+d_{j0}^{k,\infty})  \leq 0$ and \eqref{eq:rpluszero} holds.

Now consider the case that $\bar x_j^k+\bar d_j^{k,\infty}\neq 0$ for $j\in\JJJ$. Since $d^{k,\infty}$ is a limit point of  $\{d^{k,l}\}_{l=0}^{\infty}$, there exists a subsequence $\{d^{k,l_t}\}_{t=0}^\infty$ that converges to $d^{k,\infty}$.
We may assume without loss of generality that $\bar x_j^k+\bar d_j^{k,l_t}\neq 0$ for all $t$.  Then, for any $t$, by Step~\ref{s:updateYinner}, $x_j^k+ d_j^{k,l_t}\in\YYY_j^{k,l_{t+1}}$.  In the inner iteration $l_{t+1}$, the trial step $d_j^{k,l_{t+1}}$ is computed from \eqref{eq:qp_Y} and satisfies $x_j^k+ d_j^{k,l_{t+1}} \in \CCC_j(\YYY_j^{k,l_t})$, which by definition \eqref{eq:CCC_j} implies
\[
\na  r_j(x_j^k+ d_j^{k,l_t})^T(x_j^k+ d_j^{k,l_{t+1}}) \leq 0.
\]
Taking the limit $t\to\infty$ and using the fact that $\na r_j(v_j)^Tv_j = r_j(v_j)$ for any $v_j\in\KKK_j$ yields
$$
r_j(x_j^k+ d_j^{k,\infty}) = \na  r_j(x_j^k+ d_j^{k,\infty})^T(x_j^k+ d_j^{k,\infty}) \leq 0,
$$
proving \eqref{eq:rpluszero}.  In turn \eqref{eq:rpluszero} implies that the ratio
\begin{align*}
\frac{\varphi(x^k+d^{k,l}; \rho^k) -  \varphi(x^k; \rho^k)}{m^k(x^k+d^{k,l};\rho^k) - m^k(x^k;\rho^k)}
=
\frac{c^Td^{k,l} + \rho^k([r_j(x_j^k+ d_j^{k,l})]^+-[r_j(x_j^k)]^+)}{c^Td^{k,l} - \rho^k[r_j(x_j^k)]^+}
\end{align*}
converges to 1.  Note that the ratio is well-defined since $m^k(x^k+d^{k,l};\rho^k) - m^k(x^k;\rho^k)<0$ due to Lemma~\ref{lem:modeldecrease}(\ref{en:modeldiffnegative}). It then follows that \eqref{eq:suffdecdlt} is true for sufficiently large $l$.
\end{proof}

\begin{lemma}\label{lem:xlimit}
Suppose that there exists $\rho^{\infty}>0$ so that $\rho^k= \rho^{\infty}>0$ for all large $k$.
Then any limit point of $\{x^k\}_{k=0}^\infty$ is an optimal solution of the SOCP \eqref{eq:socp}.
\end{lemma}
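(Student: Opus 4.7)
My plan is to follow the standard pattern for penalty-based SQP global convergence in three stages: monotone descent of the penalty function $\varphi(\,\cdot\,;\rho^\infty)$ along the accepted iterates, vanishing of the model decrease, and passage to the limit in the QP optimality system~\eqref{eq:opt_qp} to recover the SOCP KKT conditions~\eqref{eq:opt_socp}.

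Let $K$ be such that $\rho^k=\rho^\infty$ for all $k\geq K$. Combining the acceptance rule~\eqref{eq:armijo} with Lemma~\ref{lem:modeldiff}(i) gives $\varphi(x^{k+1};\rho^\infty)\leq\varphi(x^k;\rho^\infty)$ for $k\geq K$. Since $x^k\in X$ is bounded (Assumption~\ref{ass:boundedness}) and each $[r_j(x^k_j)]^+\geq 0$, this monotone sequence is bounded below, hence converges. A telescoping argument with~\eqref{eq:armijo} then gives $\sum_{k\geq K}|m^k(x^k+d^k;\rho^\infty)-m^k(x^k;\rho^\infty)|<\infty$, so the model decrease tends to zero. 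Plugging this into Lemma~\ref{lem:modeldiff}(i) forces both non-negative terms $(d^k)^TH^kd^k$ and $(\rho^\infty-\rho^k_{\min})\sum_j[r_j(x^k_j)]^+$ to vanish; the safeguard~\eqref{eq:Hrescale} additionally yields $H^kd^k\to 0$.

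For a limit point $x^*$ with $x^{k_\ell}\to x^*$, I would use boundedness of $\{\hat z^k\}$ (from $\|\hat z^k_{\JJJ,0}\|_\infty<\rho^\infty$ enforced by~\eqref{eq:rhoupdate} together with $\hat z^k_j\in\CCC^\circ_j(\hat\YYY^k_j)\subseteq\KKK_j$), of $\{d^k\}$ (since $x^k+d^k\in X$), and of $\{A^T\hat\lambda^k\}$ (via~\eqref{eq:opt_qp_1}) to extract a further subsequence on which $d^{k_\ell}\to d^*$, $\hat z^{k_\ell}\to\hat z^*$, and a suitable selection $\hat\lambda^{k_\ell}\to\hat\lambda^*\geq 0$. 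Passage to the limit in~\eqref{eq:opt_qp} using $H^{k_\ell}d^{k_\ell}\to 0$ yields $c+A^T\hat\lambda^*-\hat z^*=0$, the linear complementarity $Ax^*-b\leq 0\perp\hat\lambda^*\geq 0$, and $(x^*_j+d^*_j)^T\hat z^*_j=0$ with $\hat z^*_j\in\KKK_j$. Once primal feasibility $x^*_j\in\KKK_j$ is in hand, the conic complementarity forces $d^*=0$, and these relations collapse to the SOCP KKT conditions~\eqref{eq:opt_socp}.

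The principal obstacle is establishing primal feasibility $x^*_j\in\KKK_j$, which cannot be read off directly from $(\rho^\infty-\rho^k_{\min})\sum_j[r_j(x^k_j)]^+\to 0$ because~\eqref{eq:rhoupdate} only guarantees the strict inequality $\rho^\infty>\rho^k_{\min}$, not a uniform gap. I plan to circumvent this by exploiting the cutting-plane mechanism: whenever $r_j(x^k_j)>0$ and $\bar x^k_j\neq 0$, the update~\eqref{eq:YYYplus_pr} adds $x^k_j$ to $\YYY^{k+1}_j$, and by the monotonic growth of $\{\YYY^k_j\}$, the supporting-hyperplane inequality $\na r_j(x^k_j)^T x^{k'}_j\leq 0$ is enforced on every subsequent iterate $x^{k'}_j$ with $k'>k$. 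Because $r_j(x^*_j)>0$ implies $\bar x^*_j\neq 0$ (since $r_j(x^*_j)=\|\bar x^*_j\|-x^*_{j0}$ and $x^*_{j0}\geq 0$ by~\eqref{eq:x0feasible}), these updates are triggered for all large $k_\ell$ on the subsequence converging to an assumed-infeasible $x^*$; taking the limit of the accumulated inequalities yields $\na r_j(x^*_j)^T x^*_j\leq 0$, equivalently $r_j(x^*_j)\leq 0$, contradicting the infeasibility assumption and closing the argument.
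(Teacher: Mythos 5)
Your first two stages (telescoping the sufficient-decrease condition, vanishing of the model decrease, and $H^kd^k\to0$) coincide with the paper's argument. The two places where you deviate, however, both contain genuine gaps.

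First, your route to primal feasibility via accumulated cutting planes fails for the full algorithm. You claim that once $x^{k}_j$ is added to $\YYY_j^{k+1}$ by \eqref{eq:YYYplus_pr}, the inequality $\na r_j(x^k_j)^Tx^{k'}_j\le0$ is ``enforced on every subsequent iterate.'' That is true only for iterates produced by the full subproblem \eqref{eq:qp_Y}. When a fast NLP-SQP step from \eqref{eq:newton_qp} is accepted, the outer-approximation constraint $x_j^k+d_j\in\CCC_j(\YYY_j^k)$ is imposed \emph{only} for $j\in\hat\EEE^k$; for all other cones the subproblem contains just the single linearization \eqref{eq:newton_qp3} at the current point and the bound $x_{j0}^k+d_{j0}\ge0$. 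Hence a cut generated at $x^{k_\ell}_j$ places no restriction on a later iterate $x^{k_{\ell'}}_j$ reached by such a step, and the limiting inequality $\na r_j(x^*_j)^Tx^*_j\le0$ cannot be extracted along an arbitrary convergent subsequence. (Even restricted to full steps, one must pair a cut generated at one subsequence element with a \emph{later} element of the same subsequence, as in the paper's Lemma~\ref{lem:modeldecrease}.) The paper instead obtains feasibility from the uniform penalty gap $\rho^\infty-\rho^k_{\min}\ge\rho^\infty-\rho^\infty_{\min}>0$, which is exactly the ingredient you chose to avoid; your concern that the gap is not immediate from the lemma's hypothesis is fair, but it is supplied by Lemmas~\ref{lem:zbounded} and~\ref{lem:rhoconstant}, not by the cuts.

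Second, the assertion that ``conic complementarity forces $d^*=0$'' is unjustified and false in general: $d^k$ is the step to an optimizer of a polyhedral relaxation and need not vanish even when $x^k\to x^*$ (and complementarity $(x^*+d^*)\perp\hat z^*$ constrains $d^*$ not at all on cones with $\hat z^*_j=0$). Without $d^*=0$ your limit relations give complementarity at $x^*+d^*$, not at $x^*$, and optimality of $x^*$ does not follow. The paper sidesteps this entirely: it observes that $d^{k_t}$ solves the linear program \eqref{eq:cplusgLP} with objective vector $c+H^{k_t}d^{k_t}\to c$ over a feasible set containing the SOCP feasible set, so if the (already feasible) limit $x^*$ were not optimal, a feasible descent direction $\tilde d^*$ with $c^T\tilde d^*<0$ would force $\liminf c^Td^{k}<0$, contradicting $c^Td^k\to0$. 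Your KKT-based finish could in principle be repaired by showing directly that $(x^k)^T\hat z^k\to0$ and $(Ax^k-b)^T\hat\lambda^k\to0$ (using the identities from the proof of Lemma~\ref{lem:modeldiff}), but as written the step does not go through.
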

\begin{proof}
From \eqref{eq:armijo} and the updates in the algorithm, we have that
\begin{align*}
\varphi(x^{k+1};\rho^\infty) - \varphi(x^{K_\rho};\rho^\infty) &  =\sum_{t=K_\rho}^k\Big(\varphi(x^{t+1};\rho^\infty) - \varphi(x^t;\rho^\infty)\Big) \\
 &  \leq c_{\text{dec}}\sum_{t=K_\rho}^k\Big(m^t(x^{t}+d^t;\rho^\infty) - m^t(x^t;\rho^\infty)\Big)
\end{align*}
for $k\geq K_\rho$.
Since the SOCP cannot be unbounded below by Assumption~\ref{ass:boundedness}, the left-hand side is bounded below as $k\to\infty$.   Lemma~\ref{lem:modeldiff}(\ref{en:model_decrease}) shows that all summands are non-positive and we obtain
\begin{equation}\label{eq:diffm_zero}
\lim_{k\to\infty}\Big(m^k(x^{k}+d^k;\rho^\infty) - m^k(x^k;\rho^\infty)\Big) = 0.
\end{equation}
Using Lemma~\ref{lem:modeldiff}(\ref{en:model_decrease}), we also have
\[
\lim_{k\to\infty} \biggl((d^k)^TH^kd^k + (\rho^\infty-\rho_{\min}^k)\sum_{j\in\JJJ}[r_j(x^k_j)]^+\biggr) = 0.
\]
Since $H^k$ is positive semi-definite and $\rho^\infty-\rho^k_{\min}\geq\rho^\infty-\rho^\infty_{\min}>0 $, this implies that $[r_j(x^k_j)]^+\to0$ for all $j\in\JJJ$, i.e., all limit points of $\{x^k\}_{k=0}^\infty$ are feasible.
This also yields $\lim_{k\to\infty}(d^k)^TH^k d^k=0$, and since $H^k$ is positive semi-definite and uniformly bounded due to \eqref{eq:Hrescale}, we have
\begin{equation}\label{eq:Hdlimit}
\lim_{k\to\infty}H^k d^k=0.
\end{equation}

Using \eqref{eq:diffm_zero} together with \eqref{eq:merit_model} and $[r_j(x^k_j)]^+\to0$, we obtain 
\begin{equation}\label{eq:ctdlim}
0 = \lim_{k\to\infty}\biggl( c^Td^k - \rho^\infty\sum_{j\in\DDD(x^k)}[r_j(x^k_j)]^+\biggr) = \lim_{k\to\infty} c^Td^k.
\end{equation}

Now let $x^*$ be a limit point of $\{x^k\}_{k=0}^\infty$.  Since $X$ is bounded, $d^k$ is bounded, and consequently there exists a subsequence $\{k_t\}_{t=0}^\infty$ of iterates so that $x^{k_t}$ and $d^{k_t}$ converge to $x^*$ and $d^\infty$, respectively, for some limit point $d^\infty$ of $d^k$. 
Define $g^{k_t}= H^{k_t}d^{k_t}$ for all $t$.  Then, looking at the QP optimality conditions \eqref{eq:opt_qp}, we see that $d^{k_t}$ is also an optimal solution of the linear optimization problem
\begin{equation}\label{eq:cplusgLP}
\begin{aligned}
\min_{d\in\RR^n} \ & (c+g^{k_t})^Td \\
\text{s.t.} \ & A(x^{k_t}+d) \leq b, \\
& x_j^{k_t} + d_j \in \CCC_j(\hat\YYY^{k_t}_j), \qquad j\in\JJJ.
\end{aligned}
\end{equation}

Now suppose, for the purpose of deriving a contradiction, that $x^*$ is not an optimal solution of the SOCP.  Since we showed above that $x^*$ is feasible, there then exists a step $\tilde d^*\in\RR^n$ so that $\tilde x = x^* + \tilde d^*$ is feasible for \eqref{eq:socp} and $c^T\tilde d^*<0$.
Then, because $\KKK_j\subseteq\CCC_j(\hat\YYY^{k_t}_j)$, for each $t$, $\tilde d^{k_t}= x^*-x^{k_t}+\tilde d^*$ is feasible for \eqref{eq:cplusgLP}, and because $d^{k_t}$ is an optimal solution of \eqref{eq:cplusgLP}, we have $(c+g^{k_t})^Td^{k_t} \leq (c+g^{k_t})^T\tilde d^{k_t}$.  Taking the limit $t\to\infty$, we obtain $c^Td^{\infty} \leq c^T\tilde d^{*}<0$, where we used $\lim_{t\to\infty}g^{k_t} = \lim_{t\to\infty} H^{k_t}d^{k_t} = 0$, due to the definition of $g^{k_t}$ and \eqref{eq:Hdlimit}.  However, this contradicts \eqref{eq:ctdlim}.  Therefore, $x^*$ must be a solution of the SOCP.
\end{proof}

For later reference, we highlight the limit \eqref{eq:Hdlimit} established in the above proof.
\begin{lemma}\label{cor:Hdlimit}
Suppose that there exists $\rho^{\infty}>0$ so that $\rho^k= \rho^{\infty}>0$ for all large $k$.  Then $\lim_{k\to\infty}H^k d^k=0$.
\end{lemma}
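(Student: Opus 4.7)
The statement of this corollary is exactly the limit \eqref{eq:Hdlimit} that was already derived in the proof of Lemma~\ref{lem:xlimit}, so my plan is to simply extract and re-present the relevant portion of that argument rather than proving anything new. The corollary is essentially a bookkeeping device: the author wants to invoke $\lim_{k\to\infty}H^kd^k=0$ on its own in later sections without forcing the reader to dig into the middle of the proof of Lemma~\ref{lem:xlimit}.

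Concretely, the plan is to recall the three ingredients that were assembled inside the proof of Lemma~\ref{lem:xlimit}. First, by telescoping the sufficient decrease condition \eqref{eq:armijo} across iterations $k \geq K_\rho$ (where $\rho^k=\rho^\infty$ stabilizes), and using that $\varphi(\,\cdot\,;\rho^\infty)$ is bounded below on $X$ by Assumption~\ref{ass:boundedness}, one obtains
\begin{equation*}
\lim_{k\to\infty}\bigl(m^k(x^k+d^k;\rho^\infty) - m^k(x^k;\rho^\infty)\bigr) = 0.
\end{equation*}
Second, Lemma~\ref{lem:modeldiff}(\ref{en:model_decrease}) bounds each summand above by the non-positive quantity $-(d^k)^TH^kd^k - (\rho^\infty-\rho_{\min}^k)\sum_{j\in\JJJ}[r_j(x^k_j)]^+$, so both of the (non-negative) terms $(d^k)^TH^kd^k$ and $(\rho^\infty-\rho_{\min}^k)\sum_{j\in\JJJ}[r_j(x_j^k)]^+$ must tend to zero individually. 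In particular $(d^k)^TH^kd^k \to 0$.

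The final step, which is the only one requiring a small additional remark, is to pass from $(d^k)^TH^kd^k\to 0$ to $H^kd^k\to 0$. This uses that $H^k$ is positive semi-definite and uniformly bounded thanks to the safeguard rescaling \eqref{eq:Hrescale}, which ensures $\|H^k\|\leq c_H$ for all $k$. Writing $H^k = (H^k)^{1/2}(H^k)^{1/2}$, we have $\|(H^k)^{1/2}d^k\|^2=(d^k)^TH^kd^k\to0$, and then $\|H^kd^k\|\leq \|(H^k)^{1/2}\|\cdot\|(H^k)^{1/2}d^k\|\leq \sqrt{c_H}\,\|(H^k)^{1/2}d^k\|\to 0$. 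I do not expect any obstacle: the entire argument is contained verbatim in the proof of Lemma~\ref{lem:xlimit}, and the corollary can be proved in a single sentence of the form ``This was shown as \eqref{eq:Hdlimit} in the proof of Lemma~\ref{lem:xlimit}.''
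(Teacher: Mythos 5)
Your proposal is correct and matches the paper exactly: the paper states this lemma without a separate proof, noting only that it is the limit \eqref{eq:Hdlimit} already established inside the proof of Lemma~\ref{lem:xlimit}, which is precisely the argument you reconstruct. Your added justification of the final step via $\|(H^k)^{1/2}d^k\|^2=(d^k)^TH^kd^k\to0$ and $\|H^k\|\leq c_H$ is a correct filling-in of a detail the paper leaves implicit.
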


We are now ready to prove that the algorithm is globally convergent under the following standard regularity assumption.
\begin{assumption}
\label{ass:slater}
The SOCP 
is feasible and Slater's constraint qualification holds, i.e., there exists a feasible point $\tilde x\in\RR^n$ and $\epsilon>0$ so that $\tilde x + v$ is feasible for any $v\in\RR^n$ with $\|v\|\leq\epsilon$.
\end{assumption}

This assumption implies that the multiplier estimates are bounded.
\begin{lemma}\label{lem:zbounded}
Suppose that Assumption~\ref{ass:slater} holds. Then $\{\hat z^k\}$ is bounded.
\end{lemma}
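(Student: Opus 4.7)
The starting point is the QP optimality condition \eqref{eq:opt_qp_1}, which at every iteration gives
\[
\hat z^k = c + H^k d^k + A^T \hat\lam^k,
\]
together with the complementarity relations \eqref{eq:opt_qp_2} and \eqref{eq:opt_qp_4}. Let $\tilde x$ and $\epsilon>0$ be the Slater point and margin from Assumption \ref{ass:slater}. The plan is to test \eqref{eq:opt_qp_1} against the fixed displacement $\tilde x - x^k - d^k$, namely
\[
0 = (\tilde x - x^k - d^k)^T \bigl(c + H^k d^k + A^T \hat\lam^k - \hat z^k\bigr),
\]
and then exploit Slater on the two complementarity terms to produce a coercive lower bound on $\|\hat\lam^k\|_1$ and $\|\hat z^k\|$, with a uniformly bounded right-hand side.

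Using \eqref{eq:opt_qp_2}, complementarity gives $(A(x^k+d^k)-b)^T\hat\lam^k=0$, so
\[
(\tilde x - x^k - d^k)^T A^T \hat\lam^k = (A\tilde x - b)^T \hat\lam^k.
\]
From Slater, $A(\tilde x + v) \leq b$ for every $\|v\|\leq \epsilon$, which yields $b_i - a_i^T\tilde x \geq \epsilon\|a_i\|$ for each row $a_i$ of $A$ (rows with $a_i=0$ give a redundant constraint and we may assume $a_i\neq 0$ without loss of generality). Hence
\[
(A\tilde x - b)^T \hat\lam^k \leq -\epsilon \gamma \|\hat\lam^k\|_1, \qquad \gamma := \min_i \|a_i\| > 0.
\]
Similarly, using \eqref{eq:opt_qp_4} we have $(x^k+d^k)^T\hat z^k=0$, so $(\tilde x - x^k - d^k)^T\hat z^k = \tilde x^T\hat z^k$. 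Since $\hat z_j^k \in \CCC_j^\circ(\hat\YYY_j^k)\subseteq\KKK_j$ as shown in Section~\ref{sec:support}, and since $\tilde x_j + v_j \in \KKK_j$ for all $\|v_j\|\leq \epsilon$, the self-duality of $\KKK_j$ (apply it to $v_j = -\epsilon\,\hat z_j^k/\|\hat z_j^k\|$) gives $\tilde x_j^T \hat z_j^k \geq \epsilon \|\hat z_j^k\|$ for every $j\in\JJJ$.

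Combining the two bounds and rearranging yields
\[
\epsilon\gamma \|\hat\lam^k\|_1 + \epsilon \sum_{j\in\JJJ}\|\hat z_j^k\| \leq (\tilde x - x^k - d^k)^T\bigl(c + H^k d^k\bigr).
\]
By Assumption \ref{ass:boundedness}, $X$ is bounded; since $x^k\in X$ and $x^k+d^k\in X$ (the QPs enforce $A(x^k+d) \leq b$), both $\|x^k\|$ and $\|d^k\|$ are uniformly bounded. Together with $\|H^k\|\leq c_H$ from the rescaling \eqref{eq:Hrescale}, the right-hand side is uniformly bounded in $k$. Therefore $\sum_{j\in\JJJ}\|\hat z_j^k\|$, and hence $\|\hat z^k\|$, is bounded.

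The only delicate point is the inequality $\tilde x_j^T \hat z_j^k \geq \epsilon\|\hat z_j^k\|$, which relies crucially on two facts already established in the paper: (a) Slater provides an $\epsilon$-ball of feasibility around $\tilde x$, giving that each $\tilde x_j$ lies strictly inside $\KKK_j$; and (b) $\CCC_j^\circ(\hat\YYY_j^k)\subseteq\KKK_j$, so the dual iterates live in the self-dual cone $\KKK_j$. Everything else is a standard Slater-based multiplier-boundedness argument combined with the uniform boundedness of $H^k d^k$ supplied by the Hessian rescaling.
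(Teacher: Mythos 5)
Your proof is correct. The central coercivity estimate is the same one the paper uses: perturb the Slater point in the direction $-\epsilon\,\hat z_j^k/\|\hat z_j^k\|$ and use that $\hat z_j^k$ lies in a cone dual to one containing the perturbed point, which yields $\tilde x^T\hat z^k\geq\epsilon\|\hat z^k\|$. Where you differ is in how the stationarity condition is exploited to bound $\tilde x^T\hat z^k$. The paper rewrites the subproblem as a QP in the variable $x=x^k+d$, forms its Lagrangian dual, invokes strong duality ($O_{\rm primal}=O_{\rm dual}$), and uses $(\tilde x-x^{k+1})^TH^k(\tilde x-x^{k+1})\geq0$ to control the quadratic terms. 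You instead test \eqref{eq:opt_qp_1} directly against $\tilde x-x^k-d^k$ and eliminate the two complementarity products via \eqref{eq:opt_qp_2} and \eqref{eq:opt_qp_4}; the remaining term $(\tilde x-x^k-d^k)^T(c+H^kd^k)$ is bounded because $x^k,x^k+d^k\in X$ and $\|H^k\|\leq c_H$. Your route is more elementary (no duality machinery, no strong-duality appeal), it treats all three step types uniformly since it only uses \eqref{eq:opt_qp}, and it delivers a bound on $\|\hat\lam^k\|_1$ as a byproduct (modulo the harmless $a_i=0$ rows, whose multipliers do not enter $A^T\hat\lam^k$ anyway and which you correctly set aside; note also that for the lemma as stated you only need $(A\tilde x-b)^T\hat\lam^k\leq0$, so the constant $\gamma$ is optional). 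The paper's duality formulation buys nothing extra for this particular lemma, so your argument is a legitimate, arguably cleaner, alternative.
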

\begin{proof}
Let $\tilde x$ and $\epsilon$ be the quantities from Assumption~\ref{ass:slater}.
Note that the QP corresponding to the optimality conditions \eqref{eq:opt_qp} is
\begin{align*}
\min_{d\in\RR^n} \ & c^Td + \tfrac12 d^TH^k d\\
\text{s.t.} \ & A(x^k+d) \leq b,\; x_j^k + d_j \in \CCC_j(\hat\YYY^k_j),\; j\in\JJJ.
\end{align*}
Since $x^{k+1}=x^k+d^k$ when $d^k$ is the step accepted by the algorithm, it follows that $x^{k+1}$ is an optimal solution of the QP
\begin{align*}
O_{\rm primal} = \min_{x\in\RR^n} \ & (c^T-H^kx^k)x + \tfrac12 x^TH^k x\\
\text{s.t.} \ & Ax^{k+1} \leq b,\; x_j^{k+1} \in \CCC_j(\hat\YYY^k_j),\; j\in\JJJ,
\end{align*}
the Lagrangian dual of which is
\begin{subequations}
\begin{align}
O_{\rm dual} = \max_{x,z\in\RR^n,\lam\in\RR^m} \ & -b^T\lam  - \tfrac12 x^TH^k x\\
\text{s.t.} \ & c - H^kx^k + H^kx +A^T\lam -z  =0, \label{eq:dualQP2}\\
& z \in \CCC^\circ_j(\hat\YYY^k_j),\; j\in\JJJ, \quad\lam\geq 0.
\end{align}
\end{subequations}
By \eqref{eq:opt_qp}, $(x^{k+1},\hat\lam^{k}$,$\hat z^k$) is a primal-dual optimal solution of these QPs.

Define $v=-\epsilon\frac{\hat z^k}{\|\hat  z^k\|}$.  Then $\|v\|\leq\epsilon$, and Assumption~\ref{ass:slater} implies that $\tilde x + v\in\KKK_j\subseteq\CCC_j(\hat\YYY_j^k)$. Since $\hat z^k\in\CCC^\circ_j(\hat\YYY_j^k)$, we have with \eqref{eq:dualQP2} that
\begin{equation}\label{eq:vbarx}
0\leq (\tilde x+v)^T\hat z^k = v^T\hat z^k + \tilde x^T( c - H^k\tilde x + H^kx^{k+1} +A^T\hat\lam^k).
\end{equation}
Since $H^k$ is positive definite, it is
\begin{equation}\label{eq:H2sides}
0\leq(\tilde x - x^{k+1})^TH^k(\tilde x-x^{k+1}) = \tilde x^TH^k\tilde x - 2\tilde x^TH^kx^{k+1} + (x^{k+1})^TH^kx^{k+1}.
\end{equation}
Furthermore, Slater's condition implies strong duality, that is
\begin{align}\nonumber
b^T\hat\lam^k + \frac12(x^{k+1})^TH^kx^{k+1} &= -O_{\rm dual} = -O_{\rm primal} \\&=  -(c-H^kx^{k})^Tx^{k+1} - \tfrac12 (x^{k+1})^TH^k x^{k+1}.\label{eq:optsame}
\end{align}
Finally, since $\tilde x$ is feasible for the SOCP, \eqref{eq:socp_2} and $\hat\lam^k\geq0$ imply $\tilde x^T A^T\hat\lam^k\leq b^T\hat\lam^k$.
Subtracting $v^T\hat z^k$ on both sides of \eqref{eq:vbarx}, this, together with \eqref{eq:H2sides} and \eqref{eq:optsame}, yields
\begin{align*}
    \epsilon\|\hat z^k\| &\leq \tilde x^Tc - \frac12 \tilde x^TH^k\tilde x + \frac12 (x^{k+1})^TH^kx^{k+1} + b^T\hat\lam^k \\
    & = \tilde x^Tc - \frac12 \tilde x^TH^k\tilde x - c^Tx^{k+1} - \frac 12(x^{k+1})^TH^kx^{k+1}.
\end{align*}
The first two terms are independent of $k$, and since $X$ is bounded by Assumption~\ref{ass:slater} and $H^k$ is uniformly bounded by \eqref{eq:Hrescale}, we can conclude that $\hat z^k$ is uniformly bounded.
\end{proof}

It is easy to see that the penalty parameter update rule \eqref{eq:rhoupdate} and Lemma~\ref{lem:zbounded} imply the following result.
\begin{lemma}\label{lem:rhoconstant}
Suppose Assumption~\ref{ass:slater} holds. Then there exists $\rho^{\infty}$ and $K_\rho$ so that $\rho^k=\rho^{\infty}> \rho^{\infty}_{\min}$, where 
$\rho^{\infty}_{\min}\geq \rho^{k}_{\min}=\|z^k_{\JJJ,0}\|_\infty$ for all $k\geq K_\rho$.
\end{lemma}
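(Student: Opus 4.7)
The strategy is to combine the two ingredients flagged in the statement: boundedness of $\{\hat z^k\}$ from Lemma~\ref{lem:zbounded} and the geometric-jump structure of the penalty update rule \eqref{eq:rhoupdate}. The argument breaks naturally into three steps: monotonicity of $\{\rho^k\}$, finiteness of the number of updates (hence stabilization at some $\rho^\infty$), and strict separation from the thresholds $\rho^k_{\min}$.

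First I would show that $\{\rho^k\}$ is non-decreasing with multiplicative jumps. On the ``keep'' branch of \eqref{eq:rhoupdate} we have $\rho^k=\rho^{k-1}$. On the ``increase'' branch, the defining condition $\rho^{k-1}\le\|\hat z^k_{\JJJ,0}\|_\infty$ combined with $c_{\text{inc}}>1$ yields $\rho^k = c_{\text{inc}}\|\hat z^k_{\JJJ,0}\|_\infty \ge c_{\text{inc}}\rho^{k-1} > \rho^{k-1}$, so every strict increase scales $\rho$ by at least the factor $c_{\text{inc}}$.

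Second, Lemma~\ref{lem:zbounded} gives a finite bound $B:=\sup_k\|\hat z^k_{\JJJ,0}\|_\infty<\infty$, so whenever the ``increase'' branch executes we have $\rho^k\le c_{\text{inc}}B$, and consequently $\rho^k\le\max\{\rho^{-1},c_{\text{inc}}B\}$ for every $k$. Together with the factor-$c_{\text{inc}}$ lower bound on each jump, only finitely many strict increases can occur. Taking $K_\rho$ to be one more than the index of the last such increase (or $0$ if no update ever happens) gives $\rho^k=\rho^\infty$ for all $k\ge K_\rho$. Moreover, for every such $k$ the update rule must have selected the ``keep'' branch, which by \eqref{eq:rhoupdate} means $\rho^{k-1}>\|\hat z^k_{\JJJ,0}\|_\infty=\rho^k_{\min}$, i.e.\ $\rho^\infty>\rho^k_{\min}$. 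Choosing any uniform upper bound $\rho^\infty_{\min}\ge\rho^k_{\min}$ for $k\ge K_\rho$ strictly below $\rho^\infty$ completes the proof.

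The main obstacle is the last clause: upgrading the pointwise strict inequalities $\rho^\infty>\rho^k_{\min}$ to a single uniform separation $\rho^\infty>\rho^\infty_{\min}$. The natural candidate is $\rho^\infty_{\min}:=\sup_{k\ge K_\rho}\rho^k_{\min}$, which is finite by Lemma~\ref{lem:zbounded} and is automatically $\leq\rho^\infty$; what is required is the strict inequality. One clean way to secure this is to observe that, at the last update index, $\rho^\infty=c_{\text{inc}}\|\hat z^{K_\rho-1}_{\JJJ,0}\|_\infty$ (when an update occurred), so the ``reserve'' factor $c_{\text{inc}}>1$ built into the update rule provides a strictly positive gap $\rho^\infty-\rho^\infty_{\min}$. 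This geometric-gap reasoning, together with boundedness, is what makes the ``easy to see'' claim in the lemma's lead-in go through, and is exactly the uniform separation needed later in the proof of Lemma~\ref{lem:xlimit}.
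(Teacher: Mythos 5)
Your first two steps are exactly the argument the paper has in mind when it declares the result "easy to see" from \eqref{eq:rhoupdate} and Lemma~\ref{lem:zbounded}: each strict update multiplies $\rho$ by at least $c_{\text{inc}}>1$, every updated value is capped by $c_{\text{inc}}\sup_k\|\hat z^k_{\JJJ,0}\|_\infty<\infty$, so only finitely many updates occur, $\rho^k$ stabilizes at some $\rho^\infty$, and thereafter the ``keep'' branch gives the pointwise inequality $\rho^\infty>\rho^k_{\min}$ for every $k\ge K_\rho$. That part is correct and follows the same route as the paper.

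The problem is your resolution of the last clause. You rightly flag that one must upgrade the pointwise inequalities to a uniform one, $\rho^\infty>\rho^\infty_{\min}\ge\sup_{k\ge K_\rho}\rho^k_{\min}$, but the proposed fix does not deliver it. The identity $\rho^\infty=c_{\text{inc}}\|\hat z^{K_\rho-1}_{\JJJ,0}\|_\infty$ controls only the multiplier norm at the single iteration where the last update fired; it says nothing about $\|\hat z^{k}_{\JJJ,0}\|_\infty$ for $k\ge K_\rho$, and the rule \eqref{eq:rhoupdate} is perfectly consistent with these norms creeping up toward $\rho^\infty$ from below --- each such $k$ still satisfies $\rho^\infty>\|\hat z^k_{\JJJ,0}\|_\infty$, so the ``keep'' branch is taken and no further update is triggered. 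In that scenario $\sup_{k\ge K_\rho}\rho^k_{\min}=\rho^\infty$ and no admissible $\rho^\infty_{\min}$ exists. (Moreover, if no update ever occurs, $\rho^\infty=\rho^{-1}$ and there is no $c_{\text{inc}}$ factor to invoke at all.) The uniform separation is genuinely needed downstream, in the proof of Lemma~\ref{lem:xlimit}, where $\rho^\infty-\rho^k_{\min}\ge\rho^\infty-\rho^\infty_{\min}>0$ is used to force $[r_j(x^k_j)]^+\to0$; securing it requires something beyond what you (or, to be fair, the paper, which gives no proof of this clause) have written --- e.g.\ a ``keep'' test of the form $\rho_{\text{old}}\ge c_{\text{inc}}\|z_{\JJJ,0}\|_\infty$, or a separate argument that $\limsup_k\|\hat z^k_{\JJJ,0}\|_\infty<\rho^\infty$.
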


We can now state the main convergence theorem of this section.

\begin{theorem}\label{thm:pdlimit}
Suppose that Assumptions~\ref{ass:boundedness} and \ref{ass:slater} hold. Then Algorithm~\ref{alg:full} either terminates in Step~\ref{s:stop} with an optimal solution, or it generates an infinite sequence of iterates $\{(x^{k+1},\hat\lam^k,\hat z^k)\}_{k=0}^\infty$, each limit point of which is a primal-dual solution of the SOCP~\eqref{eq:socp}.
\end{theorem}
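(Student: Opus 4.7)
The plan is to split into the two cases exhausted by the statement. If Algorithm~\ref{alg:full} terminates in Step~\ref{s:stop}, then by construction $(x^{k+1},\hat\lam^k,\hat z^k)$ satisfies \eqref{eq:opt_socp} and there is nothing to prove. Otherwise the algorithm produces an infinite sequence. By Lemma~\ref{lem:rhoconstant}, Assumption~\ref{ass:slater} gives a constant final penalty parameter $\rho^\infty > \rho^\infty_{\min}$ for all $k\geq K_\rho$, so the hypothesis of Lemma~\ref{lem:xlimit} is met and every limit point $x^*$ of $\{x^k\}$ (equivalently of $\{x^{k+1}\}$) is a primal-optimal solution of \eqref{eq:socp}. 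This takes care of primal optimality; what remains is to lift this to a statement about primal-dual limit points.

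Next I would fix a limit point $(x^*,\lam^*,z^*)$ of $\{(x^{k+1},\hat\lam^k,\hat z^k)\}$ and a subsequence $\{k_t\}$ along which convergence holds. I would then pass to the limit in the QP optimality system \eqref{eq:opt_qp}, which is valid at each iteration regardless of which branch (NLP-SQP step, second-order correction, or inner cutting-plane loop) produced $d^k$. The stationarity equation \eqref{eq:opt_qp_1} gives
\begin{equation*}
c + H^{k_t}d^{k_t} + A^T\hat\lam^{k_t} - \hat z^{k_t} = 0,
\end{equation*}
and Lemma~\ref{cor:Hdlimit} yields $H^{k_t}d^{k_t}\to 0$, so in the limit we get $c + A^T\lam^* - z^* = 0$, which is \eqref{eq:opt_socp_1}. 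For \eqref{eq:opt_socp_2}, I would use that $x^{k+1}\in X$ for all $k$ (observed just after Assumption~\ref{ass:boundedness}), together with $\hat\lam^{k_t}\geq 0$ and the complementarity $(Ax^{k_t+1}-b)^T\hat\lam^{k_t} = 0$ coming from \eqref{eq:opt_qp_2}; passing to the limit gives the required primal feasibility, dual sign, and complementarity.

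The cone condition \eqref{eq:opt_socp_3} is the part requiring slightly more care and is the main thing to verify carefully. Primal feasibility $x^*_j\in\KKK_j$ is immediate from Lemma~\ref{lem:xlimit}. For the dual, we recall from Section~\ref{sec:support} that $\CCC^\circ_j(\hat\YYY_j^{k_t})\subseteq\KKK_j$, so $\hat z_j^{k_t}\in\KKK_j$ for every $t$, and hence $z_j^*\in\KKK_j$ since $\KKK_j$ is closed. Finally, complementarity $(x_j^{k_t}+d_j^{k_t})^T\hat z_j^{k_t} = 0$ from \eqref{eq:opt_qp_4}, together with $x^{k_t+1}=x^{k_t}+d^{k_t}\to x^*$ and $\hat z^{k_t}\to z^*$, yields $(x_j^*)^T z_j^*=0$, completing \eqref{eq:opt_socp_3}.

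The main obstacle I anticipate is ensuring that the optimality system \eqref{eq:opt_qp} is the correct one to invoke regardless of which of the three possible step computations produced the accepted step; I would handle this uniformly by noting that in each of the three cases Sections~\ref{sec:fastSQPsteps}--\ref{s:penalty} explicitly derive \eqref{eq:opt_qp} (with the appropriate $\hat\YYY_j^k$ from \eqref{eq:hatYregular}, \eqref{eq:hatYSQP}, or \eqref{eq:hatYSOC}) for the accepted $(d^k,\hat\lam^k,\hat z^k)$, and that in all cases $\CCC^\circ_j(\hat\YYY_j^k)\subseteq\KKK_j$, so the limiting argument above goes through without modification. A second minor point is that we do not a priori know $\{\hat\lam^k\}$ to be bounded, but the theorem only asserts that \emph{limit points} are primal-dual solutions, so we work along a subsequence where convergence of $(\hat\lam^k,\hat z^k)$ is assumed and no boundedness of $\hat\lam^k$ needs to be established.
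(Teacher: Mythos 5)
Your proposal is correct and follows essentially the same route as the paper's proof: fix a convergent subsequence, note that the QP optimality system \eqref{eq:opt_qp} holds for every accepted step regardless of which subproblem produced it, use Lemma~\ref{lem:rhoconstant} and Lemma~\ref{lem:xlimit} for primal feasibility/optimality, $\CCC^\circ_j(\hat\YYY_j^k)\subseteq\KKK_j$ plus closedness and complementarity for \eqref{eq:opt_socp_3}, and Lemma~\ref{cor:Hdlimit} to pass to the limit in \eqref{eq:opt_qp_1} and \eqref{eq:opt_qp_2}. The only difference is that you spell out the trivial termination case explicitly, which the paper leaves implicit.
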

\begin{proof}
\begin{sloppypar}
Let $\{(x^{k_t+1},\hat\lam^{k_t},\hat z^{k_t})\}$ be a subsequence converging to a limit point $(x^*,\lam^*,z^*)$.
No matter whether an iterate is computed from the optimal solution of \eqref{eq:qp_Y}, \eqref{eq:newton_qp}, or \eqref{eq:newton_soc}, the iterates satisfy the optimality conditions \eqref{eq:opt_qp}.
In particular, from \eqref{eq:opt_qp_4} we have for any $j\in\JJJ$ that $\hat z_j^{k_t}\in\CCC_j^\circ(\hat\YYY_j^{k_t})\subseteq\KKK_j$ and $(x_j^{k_t+1})^T\hat z_j^{k_t}=0$.  In the limit, we obtain $z_j^*\in\KKK_j$ (since $\KKK_j$ is closed) and $(x_j^*)^Tz_j^*=0$.
Lemma~\ref{lem:rhoconstant} yields that $\rho^k=\rho^{\infty}$ for all large $k$, and so Lemma~\ref{lem:xlimit} implies that $x^*$ is feasible, i.e., $x_j^*\in\KKK_j$.  Therefore, \eqref{eq:opt_socp_3} holds.
Using Lemma~\ref{cor:Hdlimit} we can take the limit in \eqref{eq:opt_qp_1} and \eqref{eq:opt_qp_2} and deduce also the remaining SOCP optimality conditions \eqref{eq:opt_socp_1} and \eqref{eq:opt_socp_2} hold at the limit point.
\end{sloppypar}
\end{proof}

\begin{remark}\label{rem:infeasible}
In case the SOCP is infeasible, we have two possible outcomes.  Either, Algorithm~\ref{alg:full} terminates in some iterations because one of the QPs is infeasible, or $\lim_{k\to\infty}\rho^k=\infty$ (reverse conclusion of Lemma~\ref{lem:xlimit}).
\end{remark}

\subsection{Identification of extremal-active cones}

We can only expect fast local convergence under some non-degeneracy assumptions.  Throughout this section, we assume that Assumption~\ref{ass:socpnondegen}  holds.
Under this assumption, $(x^*,\lam^*,z^*)$ is the unique optimal solution \cite[Theorem 22]{alizadeh2003second}, and Theorem~\ref{thm:pdlimit} then implies that
\begin{equation*}
\lim_{k\to\infty} (x^{k+1},\hat\lam^k,\hat z^k) = (x^*,\lam^*,z^*).
\end{equation*}

First, we prove a technical result that describes elements in $\CCC^\circ_j(\YYY_j)$ in a compact manner.  For this characterization to hold, condition \eqref{eq:Y0} for the initialization $\YYY_j^0$ of the set of hyperplane-generating points is crucial.
\begin{lemma}\label{lem:zinC}
Let $y_j\in\RR^{n_j}$ with $\bar y_j\neq0$ and $y_{j0}\geq0$. Further, let $\Phi_j(z_j,y_j):=z_{j0}-\|\bar z_j+\bar y_j\|_1-\|\bar y_j\|$. Then the following statements hold for $z_j,y_j\in\RR^{n_j}$:
\begin{enumerate}[(i)]
\item $z_j\in\CCC_j^\circ(\YYY^0_j\cup\{y_j\})$ if $\Phi_j(z_j,y_j)\geq0$.
\label{en:zinC}
\item $z_j\in\interior(\CCC_j^\circ(\YYY^0_j\cup\{y_j\}))$ if $\Phi_j(z_j,y_j)>0$.\label{en:zinintC}
\end{enumerate}

\end{lemma}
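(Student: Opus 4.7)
The plan is to construct, for any $z_j$ with $\Phi_j(z_j,y_j)\geq0$, an explicit conic combination realizing $z_j$ in the form given by \eqref{eq:Cdual}, using only the points in $\hat\YYY_j^0\cup\{y_j\}$. Since $\YYY_j^0\supseteq\hat\YYY_j^0$ by \eqref{eq:Y0}, any such representation extends to $\YYY_j^0\cup\{y_j\}$ by padding the remaining coefficients with zeros, so it suffices to work with $\hat\YYY_j^0\cup\{y_j\}$.

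First I would compute the generators. From \eqref{eq:na_r} and the structure of the canonical basis vectors,
\[
-\na r_j(e_{ji}) = \bigl(1,\, -e_i^T\bigr)^T,\quad -\na r_j(-e_{ji}) = \bigl(1,\, e_i^T\bigr)^T \qquad (i=1,\ldots,n_j-1),
\]
and $-\na r_j(y_j) = \bigl(1,\,-\bar y_j^T/\|\bar y_j\|\bigr)^T$. I attach non-negative multipliers $\sigma_i^+,\sigma_i^-$ to the first two families, a multiplier $\sigma_y$ to $-\na r_j(y_j)$, and $\eta_j$ to $e_{j0}$. The key choice is $\sigma_y := \|\bar y_j\|$, which turns the contribution of $-\na r_j(y_j)$ to the bar block into exactly $-\bar y_j$; matching the $i$-th bar component then gives the scalar equation
\[
\sigma_i^- - \sigma_i^+ \;=\; z_{ji} + \bar y_{ji}.
\]
Decomposing the right-hand side into positive and negative parts yields non-negative $\sigma_i^\pm$ with $\sigma_i^+ + \sigma_i^- = |z_{ji}+\bar y_{ji}|$, so summing the zeroth-component equation gives
\[
\eta_j \;=\; z_{j0} - \sum_{i=1}^{n_j-1}|z_{ji}+\bar y_{ji}| - \|\bar y_j\| \;=\; \Phi_j(z_j,y_j).
\]
By hypothesis $\eta_j\geq0$, so all the coefficients are non-negative and the representation is valid, proving~\eqref{en:zinC}.

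For~\eqref{en:zinintC}, I would just appeal to continuity: the map $z\mapsto \Phi_j(z,y_j)$ is continuous, so $U:=\{z\in\RR^{n_j}:\Phi_j(z,y_j)>0\}$ is open. Part~\eqref{en:zinC} shows $U\subseteq\CCC_j^\circ(\YYY_j^0\cup\{y_j\})$, and $z_j\in U$ by assumption, placing $z_j$ in the interior of this cone.

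There is no substantive obstacle: the only insight needed is the choice $\sigma_y=\|\bar y_j\|$, which neutralizes the $y_j$-direction in the bar block so that the residual bar equations can be solved componentwise and the leftover $\ell_1$-mass $\|\bar z_j+\bar y_j\|_1$, together with $\|\bar y_j\|$ from $\sigma_y$, is absorbed into the slack $\eta_j$ in the zeroth coordinate. The role of the initialization~\eqref{eq:Y0} is precisely to guarantee that both $\pm e_{ji}$ are available to split sign-ambiguous bar components; without this, the construction fails.
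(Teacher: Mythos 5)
Your proposal is correct and follows essentially the same route as the paper's proof: both construct the explicit conic combination by assigning the weight $\|\bar y_j\|$ to the generator $-\na r_j(y_j)$, splitting $\bar z_j+\bar y_j$ into positive and negative parts over the $\pm e_{ji}$ generators, and absorbing the slack $\Phi_j(z_j,y_j)\geq 0$ into the coefficient of $e_{j0}$. Part (ii) via continuity of $\Phi_j$ and openness of the sublevel set is likewise identical to the paper's argument.
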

\begin{proof}
For (\ref{en:zinC}):  Suppose $\Phi_j(z_j,y_j)\geq0$, then
\begin{equation*}\label{eq:zj0}
z_{j0}\geq \|\bar z_j+\bar y_j\|_1+\|\bar y_j\|.
\end{equation*}
Define $\bar s_j=\bar z_j+\bar y_j$ and choose $\sigma_j^+\in\RR_+^{n_j-1}$ and $\sigma_j^-\in\RR_+^{n_j-1}$ so that $\bar s_{j}=\sigma_{j}^+-\sigma_{j}^-$ and $|s_{ji}|=\sigma_{ji}^++\sigma_{ji}^-$ for all $i=1,\ldots,n_j-1$.
Then we have  
\begin{align*}
z_{j0} & = \sum_{i=1}^{n_j-1}\sigma_{ji}^+ + \sum_{i=1}^{n_j-1}\sigma_{ji}^- + \sigma_j + \eta_j \\
\bar z_{j} & = \bar s_j-\bar y_j =\sigma_{j}^+ - \sigma_{j}^- - \sigma_j\frac{\bar y_{j}}{\|\bar y_j\|}
\end{align*}
with $\sigma_j=\|\bar y_j\|$ and some $\eta_j\in\RR_+$. Using \eqref{eq:na_r}, this can be rewritten as
\[
z_j = -\sum_{i=1}^{n_j-1}\sigma_{ji}^+\na r_j(-e_{ji}) -\sum_{i=1}^{n_j-1}\sigma_{ji}^-\na r_j(e_{ji}) -  \sigma_{j}\na r_j(y_{j}) + \eta_j e_{j0}.
\]
By the definition of $\CCC_j^\circ$ in \eqref{eq:Cdual}, this implies that $z_j\in\CCC_j^\circ(\hat\YYY^0_j\cup\{y_j\})$ where $\hat\YYY_{j}^0$ is defined in \eqref{eq:Y0}.  Since $\hat\YYY_{j}^0\subseteq\YYY_{j}^0$ from \eqref{eq:Y0}, we have $\CCC_j^\circ(\hat\YYY^0_j\cup\{y_j\})\subseteq\CCC_j^\circ(\YYY^0_j\cup\{y_j\})$, and the claim follows.

For (\ref{en:zinintC}): Suppose $\Phi_j(z_j,y_j)>0$.  Because $\Phi_j$ is a continuous function, there exists a neighborhood $N_{\epsilon}(z_j)$ around $z_j$ so that $\Phi_j(\hat z_j,y_j)>0$ for all $\hat z_j\in N_{\epsilon}(z_j)$.  From part (\ref{en:zinC}) we then have $N_{\epsilon}(z_j)\subseteq \CCC_j^\circ(\YYY^0_j\cup\{y_j\})$, and consequently $z_j\in\interior(\CCC_j^\circ(\YYY^0_j\cup\{y_j\}))$.
\end{proof}

\begin{theorem}\label{lem:EEE}
For all $k$ sufficiently large, we have $\EEE(x^k)=\EEE(x^*)$.
\end{theorem}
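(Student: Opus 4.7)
The plan is to prove the two inclusions $\EEE(x^k) \subseteq \EEE(x^*)$ and $\EEE(x^*) \subseteq \EEE(x^k)$ for large $k$ separately. First, I would dispose of the easy inclusion by a continuity argument: Assumption~\ref{ass:socpnondegen} together with Theorem~\ref{thm:pdlimit} guarantees $x^k \to x^*$, so whenever $j \notin \EEE(x^*)$ we have $x^*_{j0} > 0$ and therefore $x^k_{j0} > 0$ for all large $k$, i.e., $j \notin \EEE(x^k)$.

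For the reverse inclusion, fix $j \in \EEE(x^*)$. Strict complementarity gives $z^*_j \in \interior(\KKK_j)$, so $z^*_{j0} > \|\bar z^*_j\|$ and $r_j(z^*_j) < 0$; moreover, $\hat\lam^k \to \lam^*$ together with \eqref{eq:opt_socp_1} shows $\check z^k = c + A^T\hat\lam^k \to z^*$. My central goal is to prove that for all sufficiently large $k$,
\[
\hat z^k_j \in \interior(\CCC^\circ_j(\hat\YYY^k_j)),
\]
since then the complementarity condition in \eqref{eq:opt_qp_4} forces $x^k_j + d^k_j = 0$, hence $x^{k+1}_j = 0$, giving $j \in \EEE(x^{k+1})$.

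To establish this interior membership I would split on whether $\bar z^*_j$ vanishes. If $\bar z^*_j = 0$, a direct computation from \eqref{eq:Cdual} using the structure of $\hat\YYY^0_j$ in \eqref{eq:Y0} shows $\CCC^\circ_j(\hat\YYY^0_j) = \{z_j : z_{j0} \geq \|\bar z_j\|_1\}$, which contains $z^*_j = z^*_{j0} e_{j0}$ in its interior; combined with $\hat\YYY^0_j \subseteq \YYY^k_j$ and $\hat z^k_j \to z^*_j$, the inclusion follows. If $\bar z^*_j \neq 0$, I would invoke the dual update rule \eqref{eq:YYYplus_du}: as long as $x^k_j \neq 0$, the remaining triggering conditions $\bar{\check z}^k_j \neq 0$ and $r_j(\check z^k_j) < 0$ hold for large $k$ since $\check z^k_j \to z^*_j$, so some iteration $k_0$ appends $-\check z^{k_0}_j$ (with $\check z^{k_0}_j$ arbitrarily close to $z^*_j$) to $\YYY^{k_0+1}_j$, and this element is preserved thereafter. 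Then Lemma~\ref{lem:zinC}(ii) applies, since $\Phi_j(\hat z^k_j, -\check z^{k_0}_j) \to z^*_{j0} - \|\bar z^*_j\| > 0$, yielding the required interior membership.

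The principal obstacle I foresee is that the formula for $\hat\YYY^k_j$ depends on which step the algorithm actually accepts: \eqref{eq:hatYregular} for a regular step, \eqref{eq:hatYSQP} for an NLP-SQP step, and \eqref{eq:hatYSOC} for a second-order correction. For the regular step, or for an NLP-SQP or SOC step with $j \in \hat\EEE^k$, we have $\hat\YYY^k_j \supseteq \YYY^k_j$ and the argument above transfers directly. In the NLP-SQP cases with $j \notin \hat\EEE^k$, however, $\hat\YYY^k_j$ collapses to a singleton that need not contain $\hat\YYY^0_j$ or any previously accumulated cut. The while loop in Steps~\ref{s:whileEEE}--\ref{e:whileEEE} guarantees that the accepted step produces $x^{k+1}_{j0} > 0$ whenever $j \notin \hat\EEE^k$, so the remaining work is an inductive bootstrap showing that $j$ cannot permanently remain outside $\hat\EEE^k$. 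I expect this last step to require using that, as $x^k \to x^*$, the NLP-SQP step approaches the exact SQP step for \eqref{eq:equiv_nlp} and therefore drives $x^{k+1}_{j0}$ toward $0$, which in turn triggers the while loop to add $j$ to $\hat\EEE^k$; once $j$ enters $\hat\EEE^k$ and $x^k_j = 0$ is attained, Step~4 of Algorithm~\ref{alg:full} keeps $j \in \hat\EEE^k$ in all subsequent iterations and the interior-membership argument closes the induction.
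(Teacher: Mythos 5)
Your overall architecture matches the paper's: the easy inclusion $\EEE(x^k)\subseteq\EEE(x^*)$ by continuity, the hard inclusion via showing $\hat z^k_j\in\interior(\CCC^\circ_j(\hat\YYY^k_j))$ so that complementarity in \eqref{eq:opt_qp_4} forces $x^{k+1}_j=0$, the case split on $\bar z^*_j$, and the use of the dual update rule \eqref{eq:YYYplus_du} together with $\Phi_j(z^*_j,-z^*_j)=-r_j(z^*_j)>0$ and Lemma~\ref{lem:zinC}. You also correctly identify the one genuinely delicate point: when a fast NLP-SQP (or second-order correction) step is accepted with $j\notin\hat\EEE^k$, the set $\hat\YYY^k_j$ in \eqref{eq:hatYSQP} or \eqref{eq:hatYSOC} collapses to a singleton, so none of the accumulated cuts (including $\hat\YYY^0_j$ and the appended $-\check z^{k_0}_j$) are available.

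Your proposed resolution of that point is where the gap lies, for two reasons. First, it is circular: the claim that ``the NLP-SQP step approaches the exact SQP step for \eqref{eq:equiv_nlp}'' is exactly what the paper proves \emph{after} and \emph{using} Theorem~\ref{lem:EEE} (the local convergence theorem begins ``Using Theorem~\ref{lem:EEE} we know that \ldots\ $d^{S,k}$ \ldots\ is identical with the SQP step''), so it cannot be invoked to establish the identification itself. Second, even granting it, the mechanism fails: the while loop in Steps~\ref{s:whileEEE}--\ref{e:whileEEE} adds $j$ to $\hat\EEE^k$ only when $x^k_{j0}+d^{S,k}_{j0}=0$ holds \emph{exactly}; a step that merely drives $x^{k+1}_{j0}$ toward zero never triggers it, and after the loop one has $x^{k+1}_{j0}>0$ strictly for every $j\notin\hat\EEE^k$, i.e.\ $j\notin\EEE(x^{k+1})$. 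The paper closes this case with a dual argument you do not have: if $j\notin\hat\EEE^{k_t}$ along an infinite subsequence of accepted fast steps, then $\hat\YYY^{k_t}_j=\{\check x^{k_t}_j\}$ and $x^{k_t+1}_{j0}>0$, so the complementarity $(\hat z^{k_t}_j)^Tx^{k_t+1}_j=0$ together with $\hat z^{k_t}_j=-\sigma_j\na r_j(\check x^{k_t}_j)+\eta_je_{j0}$ forces $\eta_j=0$; hence $\hat z^{k_t}_j=-\sigma_j\na r_j(\check x^{k_t}_j)$ satisfies $r_j(\hat z^{k_t}_j)=0$, and passing to the limit gives $r_j(z^*_j)=0$, contradicting $z^*_j\in\interior(\KKK_j)$ from strict complementarity. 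This shows $j\in\hat\EEE^k$ for all large accepted fast steps, whence $\YYY^0_j\subseteq\YYY^k_j\subseteq\hat\YYY^k_j$ eventually, and the rest of your argument then goes through. Without this (or an equivalent) step, your induction does not close.

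(As a minor remark, your direct computation $\CCC^\circ_j(\hat\YYY^0_j)=\{z_j:z_{j0}\geq\|\bar z_j\|_1\}$ in the case $\bar z^*_j=0$ is correct and arguably cleaner than the paper's invocation of Lemma~\ref{lem:zinC} with $y_j=e_{ji}$.)
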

\begin{proof}
Choose $j\not\in\EEE(x^*)$, then $x^*_j\neq 0$. Because $x_j^k\to x^*_j$, it is $x_j^k\neq0$ or, equivalently, $j\not\in\EEE(x^k)$ for $k$ sufficiently large.
For the remainder of this proof we consider $j\in\EEE(x^*)$ and show that $j\in\EEE(x^k)$ for large $k$.  
Note that strict complementarity in Assumption~\ref{ass:socpnondegen} implies that $z_j^*\in\interior(\KKK_j)$, i.e., $r_j(z_j^*)<0$, and consequently $z_{j0}^*>0$.  

\begin{sloppypar}
First consider the iterations in which fast NLP-SQP steps are accepted in Steps \ref{s:testNewton} or \ref{s:testSOC}.  For the purpose of deriving a contradiction, suppose there exists an infinite subsequence so that $x^{k_t+1}=x^{k_t}+d^{S,k_t}$ or $x^{k_t+1}=x^{k_t}+d^{S,k_t}+s^{k_t}$ and $j\not\in\hat\EEE^{k_t}$.  
Then $j\not\in\hat\EEE^{k_t}$ implies $x^{k_t+1}_{j0}>0$ (according to the termination condition in the while loop in Step~\ref{s:whileEEE}).  We also have $\hat{\YYY}_j^{k_t}=\{\check x_j^{k_t}\}$ where $\check x_j^{k_t}=x_j^{k_t}$ from \eqref{eq:hatYSQP} or $\check x_j^{k_t}=x_j^{k_t}+d_j^{S,k_t}$ from \eqref{eq:hatYSOC}.
Condition \eqref{eq:opt_qp_4} yields $z_j^{k_t}\in\CCC^\circ_j(\{\check x_j^{k_t}\})$, so by \eqref{eq:Cdual} it is $z_j^{k_t}=-\sigma_j\na r_j(\check x_j^{k_t}) + \eta_j e_{j0}$ for some $\sigma_j,\eta_j\geq0$, as well as $x_j^{k_t+1}\in\CCC_j(\{\check x_j^{k_t}\})$, which by \eqref{eq:CCC_j} implies $\na r_j(\check x_j^{k_t})^Tx_j^{k_t+1}\leq0$.  Then complementarity yields
\[
0=(z_j^{k_t})^Tx_j^{k_t+1} = -\sigma_j\na r_j(\check x_j^{k_t})^Tx_j^{k_t+1} + \eta_j x^{k_t+1}_{j0}\geq \eta_j x^{k_t+1}_{j0}.
\]  
Since $x^{k_t+1}_{j0}>0$ and $\eta_j\geq0$, we must have $\eta_j=0$, and consequently $z_j^{k_t}=-\sigma_j\na r_j(\check x_j^{k_t})$.  It is easy to see that $r_j(-\sigma_j\na r_j(\check x_j^{k_t}))=0$. Since $z_j^{k_t}\to z_j^*$, continuity of $r_j$ yields $r_j(z_j^*)=0$, in contradiction to $z_j^*\in\interior(\KKK_j)$.
We thus showed that $j\in\hat\EEE^{k}$ for all large iterations $k$ in which the NLP-SQP step was accepted, and consequently \eqref{eq:hatYSQP} and \eqref{eq:hatYSOC} yield $\hat\YYY_j^{k}=\YYY_j^{k}$ for such $k$.
\end{sloppypar}

In all other iterations \eqref{eq:hatYregular} holds, and overall we obtain
\begin{equation}\label{eq:Ystack}
\YYY_j^0\subseteq\YYY_j^{k}\subseteq\hat\YYY_j^{k} \text{ for all sufficiently large } k.
\end{equation}
Let us first consider the case when $\bar z^*_j=0$.  Then $\|\bar z_j^*\|-z^*_{j0}=r_j(z_j^*)<0$ yields $z_{j0}^*>0$.  
To apply Lemma~\ref{lem:zinC} choose any $i\in\{1,\ldots,n_j-1\}$ and let $y_j=e_{ji}$.  Then $\|y_j\|_1=\|y_j\|=1$ and $\Phi_j(z_j^*,y_j)=z^*_{j0}>0$.  Since $\hat z_j^k\to z_j^*$ and $\Phi_j$ is continuous, $\Phi_j(\hat z_j^k,y_j)>0$ for sufficiently large $k$, and by Lemma~\ref{lem:zinC}, $\hat z^k_j\in\interior(\CCC_j^\circ(\YYY^0_j\cup\{y_j\}))$.  Since $y_j\in\YYY^0_j$ and \eqref{eq:Ystack} holds, we also have $\hat z^k_j\in\interior(\CCC_j^\circ(\hat\YYY^k_j))$.
General conic complementarity in \eqref{eq:opt_qp_4} then implies that $x_j^{k+1}=x_j^k+d_j^k=0$ for all large $k$, or equivalently, $j\in\EEE(x^k)$ for $k$ sufficiently large, as desired.

Now consider the case $\bar z^*_j\neq0$. 
For the purpose of deriving a contradiction, suppose there exists a subsequence $\{x^{k_t}\}_{t=0}^\infty$ so that $j\not\in\EEE(x^{k_t})$, i.e., $x^{k_t}\neq0$,
for all $t$.  
Because $\check z^k_j\to z^*_j$, $\bar z^*_j\neq0$, and $r_j(z_j^*)<0$, we may assume without loss of generality that $r_j(\check z_j^{k_t})<0$ and $\bar {\check z}_j^{k_t}\neq 0$ for all $t$.
Using this and $x_j^{k_t}\neq 0$, we see that the update rule \eqref{eq:YYYplus_du} in Step~\ref{s:updateYdual} adds $-\check z^{k_t}_j$ to $\YYY_j^{k_t+1}$.  With \eqref{eq:Ystack}, we have
\begin{equation}\label{eq:checkzinY}
-\check z^{k_t}_j\in\YYY_j^{k_t+1}\subseteq\YYY_j^{k_{t+1}}\subseteq\hat\YYY_j^{k_{t+1}} \text{ for all } t.
\end{equation}

Recall the mapping $\Phi_j$ defined in Lemma~\ref{lem:zinC}
and note that $\Phi_j(z^*_j,-z^*_j)=z^*_{j0}-\|\bar z^*_j\|=-r_j(z^*_j)>0$.
Since both $\hat z^k_j$ and $\check z^k_j$ converge to $z_j^*$ and $\Phi_j$ is continuous, it is $\Phi_j(\hat z_j^{k_{t+1}-1}, -\check z^{k_t}_j)>0$ for all large $t$, and therefore, by Lemma~\ref{lem:zinC}, $\hat z_j^{k_{t+1}-1}\in\interior(\CCC_j^\circ(\YYY^0_j\cup\{-\check z^{k_t}_j)\}))\stackrel{\eqref{eq:checkzinY}}{\subseteq}\interior(\CCC_j^\circ(\hat\YYY^{k_{t+1}-1}_j))$ for all large $t$.  Conic complementarity in \eqref{eq:opt_qp_4} then implies that $x_j^{k_{t+1}} =x_j^{k_{t+1}-1}+d_j^{k_{t+1}-1}=0$.  This is a contradiction of the definition of the subsequence 
$\{x^{k_t}\}_{t=0}^\infty$.  
\end{proof}


\begin{remark}\label{rem:zstartinterior}
In the proof of Theorem~\ref{lem:EEE}, we saw that $\Phi_j(z^*_j,-z^*_j)>0$ if $j\in\EEE(x^*)$ and $\bar z_j^*\neq0$.  Since $\Phi_j$ is continuous, this implies that there exists a neighborhood $N_\epsilon(z^*_j)$ so that 
$\Phi_j(z_j,-y_j)>0$, and consequently $z_j\in\interior(\CCC_j^\circ(\YYY^0_j\cup\{-y_j\}))$, for all $z_j,y_j\in N_\epsilon(z^*_j)$.
\end{remark}


\subsection{Quadratic local convergence}

As discussed in Section~\ref{s:reformulation}, since $x^*$ is a solution of the SOCP \eqref{eq:socp}, it is also a solution of the nonlinear problem \eqref{eq:equiv_nlp}.  We now show that Algorithm~\ref{alg:full} eventually generates steps that are identical to SQP steps for \eqref{eq:equiv_nlp}.  Then Theorem~\ref{thm:sqp_soc} implies that the iterates converge locally at a quadratic rate.

We first need to establish that the assumptions for Theorem~\ref{thm:sqp_soc} hold.
\begin{lemma}\label{lem:nondegen}
Suppose that Assumption~\ref{ass:socpnondegen} holds for the SOCP \eqref{eq:socp}.  Then Assumption~\ref{ass:sqp} holds for the NLP~\eqref{eq:equiv_nlp}.
\end{lemma}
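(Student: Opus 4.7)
The plan is to verify the three parts of Assumption~\ref{ass:sqp} one by one, after first identifying how the NLP multipliers relate to the SOCP multipliers.

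As a preliminary step, I would translate the SOCP multipliers $(\lam^*, z^*)$ into KKT multipliers for \eqref{eq:equiv_nlp}. For $j \in \DDD(x^*)$ with $r_j(x_j^*)=0$, strict complementarity in the SOCP together with $x_j^* \in \bd(\KKK_j)\setminus\{0\}$ forces $z_j^*\in\bd(\KKK_j)\setminus\{0\}$ with $z_j^* = z_{j0}^*(1,-\bar x_j^*/\|\bar x_j^*\|)^T = -z_{j0}^*\nabla r_j(x_j^*)$ by \eqref{eq:na_r}. Therefore, setting $\mu_j^* := z_{j0}^*$ for $j\in\DDD(x^*)$ and $\eta_j^* := z_j^*$ for $j\in\EEE(x^*)$, and keeping $\lam^*$ unchanged, makes the stationarity equation \eqref{eq:opt_socp_1} coincide with the NLP stationarity condition. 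For $j\in\NNN(x^*)$, strict complementarity gives $z_j^*=0$, confirming that omitting these cones from \eqref{eq:equiv_nlp} is consistent.

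With multipliers in hand, the three conditions are verified as follows. For (\ref{ass:strictcompl}), the inequality $r_j(x_j)\le 0$ is active iff $r_j(x_j^*)=0$, which happens iff $z_{j0}^*>0$ by the dichotomy above, so $\mu_j^*>0$ exactly on the active set; strict complementarity for the linear constraints \eqref{eq:equiv_nlp_1} is inherited directly from \eqref{eq:opt_socp_2}, and equality constraints \eqref{eq:equiv_nlp_4} require nothing. For (\ref{ass:licq}), the active constraints at $x^*$ consist of $a_i^Tx=b_i$ for $i\in\setB$, the equations $r_j(x_j)=0$ for $j\in\AAA:=\{j\in\DDD(x^*):r_j(x_j^*)=0\}$, and $x_j=0$ for $j\in\EEE(x^*)$. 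I would show that linear dependence of these gradients would produce a nonzero vector orthogonal to the primal tangent space $\TTT_{x^*}+\text{Ker}(A_B)$, contradicting primal nondegeneracy in the Alizadeh--Goldfarb sense \cite[Theorem~21]{alizadeh2003second}; here I use that for $j\in\AAA$ the normal direction $\nabla r_j(x_j^*)$ spans the orthogonal complement of the cone's tangent hyperplane at $x_j^*$, and for $j\in\EEE(x^*)$ the block $\{x_j=0\}$ annihilates the entire $j$-th coordinate subspace, matching the tangent space $\{0\}$ at the apex.

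For (\ref{ass:sosc}), take $d\neq 0$ in the null space of the active constraint gradients and suppose $d^TH^*d=0$. From \eqref{eq:rHess}, the block $\mu_j^*\nabla^2 r_j(x_j^*)$ is positive semidefinite with kernel (restricted to $\bar d_j$) spanned by $\bar x_j^*$ in its lower block, so for every $j\in\AAA$ I would deduce $\bar d_j = \alpha_j\bar x_j^*$; combined with $\nabla r_j(x_j^*)^Td_j=0$ this gives $d_{j0}=\alpha_j\|\bar x_j^*\|$, hence $d_j=\alpha_j x_j^*$, i.e., $d_j$ lies along the radial direction. For $j\in\EEE(x^*)$ we have $d_j=0$. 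I would then show that the remaining freedom (indices in $\NNN(x^*)\cup(\DDD(x^*)\setminus\AAA)$ and the radial components for $j\in\AAA$) forces $d$ to lie in a subspace whose intersection with $\{d: A_B d=0\}$ is trivial precisely because of dual nondegeneracy: the subspace in question is the orthogonal complement of $\TTT_{z^*}+\text{Span}(A_B^T)$, which equals $\RR^n$ by dual nondegeneracy, so the complement is $\{0\}$.

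The main obstacle, and the step requiring most care, will be the algebraic bookkeeping that identifies the tangent spaces $\TTT_{x^*}$ and $\TTT_{z^*}$ of the product cone in terms of the three-way partition $\DDD(x^*),\EEE(x^*),\NNN(x^*)$ and then matches them, through strict complementarity, to the active-constraint Jacobian and the Hessian kernel of the NLP. Once that correspondence is made explicit, (\ref{ass:licq}) and (\ref{ass:sosc}) reduce to restatements of primal and dual nondegeneracy, respectively.
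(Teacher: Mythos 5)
Your multiplier translation ($\mu_j^*=z_{j0}^*$, $\nu_j^*=z_j^*$, $z_j^*=-\mu_j^*\na r_j(x_j^*)$ via complementarity) and your arguments for strict complementarity and for the LICQ coincide with the paper's: the paper also derives \eqref{eq:zismu}, uses the boundary/interior dichotomy of SOCP strict complementarity for part (i), and invokes primal nondegeneracy for part (ii) (it cites the matrix characterization \cite[Eq.~(50)]{alizadeh2003second} rather than the tangent-space form $\mathcal{T}_{x^*}+\operatorname{Ker}(A_B)=\RR^n$, but these are equivalent, and your observation that the cone-constraint gradients have block-disjoint supports and span exactly $\mathcal{T}_{x^*}^{\perp}$ is the same structural fact the paper exploits).

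Where you genuinely diverge is the second-order condition. Both proofs start identically: $d^TH^*d=0$ plus $\na r_j(x_j^*)^Td_j=0$ forces, via the rank-one kernel of \eqref{eq:rHess}, $d_j=\alpha_j x_j^*$ for every boundary cone (the paper records this as $d_{j0}^2=\|\bar d_j\|^2$ and $\bar d_j^T\bar x_j^*=d_{j0}x_{j0}^*$). The paper then shows $c^Td=0$ from stationarity and verifies that $x^*+\beta d$ remains feasible for small $\beta$ (the radial direction keeps $r_j(x_j^*+\beta d_j)=0$), so a nonzero $d$ would produce a second optimal solution, contradicting the uniqueness guaranteed by \cite[Theorem~22]{alizadeh2003second}. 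You instead conclude $d=0$ directly: the radial directions span $\mathcal{T}_{z_j^*}^{\perp}$ for boundary cones (since $\na r_j(z_j^*)$ is a multiple of $x_j^*$), $d_j=0$ for $j\in\EEE(x^*)$ handles the blocks where $z_j^*\in\interior(\KKK_j)$, and the apex blocks contribute no condition, so $d\in\mathcal{T}_{z^*}^{\perp}\cap\operatorname{Ker}(A_B)=\{0\}$ by \emph{dual} nondegeneracy. Your route is more symmetric with the LICQ argument (primal nondegeneracy gives LICQ, dual nondegeneracy gives SOSC) and avoids the feasible-direction construction and $c^Td=0$; what it costs is exactly the bookkeeping you flag yourself, namely verifying that the tangent-space characterization of dual nondegeneracy from \cite{alizadeh2003second} transfers to this paper's formulation with inequality constraints and with variables lying in no cone (for which one must argue $\mathcal{T}_{z^*}$ contributes $\{0\}$ so that no condition is imposed on those components of $d$). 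The paper sidesteps that bookkeeping by outsourcing it to the uniqueness theorem. Both arguments are sound; neither is strictly more elementary, but you should be aware that the published proof reaches the contradiction through optimality rather than through a direct kernel computation.
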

\begin{proof}
Let $\lam^*$ and $z^*$ be the optimal multipliers for the SOCP corresponding to $x^*$, satisfying \eqref{eq:opt_socp}.  Assumption~\ref{ass:socpnondegen} implies that $\lam^*$ and $z^*$ are unique \cite[Theorem 22]{alizadeh2003second}.

Let $j\in\DDD(x^*)$ and define $\mu^*_j = z^*_{j0}\geq0$.  
If $0=r_j(x^*_j)=x^*_{j0}-\|\bar x^*_{j}\|$, complementarity \eqref{eq:opt_socp_3} implies, for all $i\in\{1,\ldots n_j\}$, that
$
0=x^*_{j0}z^*_{ji} + x^*_{ji}z^*_{j0} = \|\bar x^*_{j}\|z^*_{ji} + x^*_{ji}z^*_{j0},
$, or equivalently, $z^*_{ji} = -z^*_{j0} \frac{x^*_{ji}}{\|\bar x^*_{j}\|}$; see \cite[Lemma 15]{alizadeh2003second}. Using \eqref{eq:na_r}, this can be written as
\begin{equation}\label{eq:zismu}
z^*_j = -z^*_{j0}\na r_j(x^*_j) =  -\mu^*_{j}\na r_j(x^*_j).
\end{equation}
On the other hand, if $r_j(x^*_j)<0$, i.e., the constraint \eqref{eq:equiv_nlp_3} is inactive, then $x^*_j\in\interior(\KKK_j)$ and complementarity \eqref{eq:opt_socp_3} yields $z_j^*=0$ (see \cite[Definition 23]{alizadeh2003second}) and therefore $\mu_j^*=0$. Consequently, \eqref{eq:zismu} is also valid in that case.
Finally, we define $\nu^*_j=z^*_j$ for all $i\in\EEE(x^*)$.
With these definitions, \eqref{eq:opt_socp_1} can be restated as
\begin{equation}\label{eq:nlp_stationary}
c + A^T \lambda^* + \sum_{j\in \DDD(x^*)} \mu^*_j \na r_j(x^*) - \nu^* = 0,
\end{equation}
where $\nu^*\in\RR^n$ is the vector with the values of $\nu_j^*$ at the components corresponding to $j\in\EEE(x^*)$ and zero otherwise.
We now prove parts (\ref{ass:strictcompl}), (\ref{ass:licq}), and (\ref{ass:sosc}) of Assumption~\ref{ass:sqp}.

Proof of (\ref{ass:strictcompl}): Let $j\in\DDD(x^*_j)$. We already established that $r_j(x_j^*)<0$ yields $\mu^*_j=0$.  Now suppose that $r_j(x^*_j)=0$.  Then $x_j^*\in\bd(\KKK_j)\setminus\{0\}$.  Since strict complementarity is assumed, we have $z_j^*\in\bd(\KKK_j)\setminus\{0\}$ (see the comment after Assumption~\ref{ass:socpnondegen}), which in turn yields $z_j^*\neq 0$ and hence $\mu_j^*\neq0$.

Proof of (\ref{ass:licq}): 
Since we need to prove linear independence only of those constraints that are active at $x^*$, we consider only those rows $A_\AAA$ of $A$ for which \eqref{eq:equiv_nlp_1} is binding.

Without loss of generality suppose $x^*$ is partitioned into four parts, $(x^*)^T=((x_\BBB^*)^T\ (x_\III^*)^T\ (x_\EEE^*)^T\  (x_\FFF^*)^T)$, where $x_\BBB^*$, $x_\III^*$, and $x_\EEE^*$ correspond to the variables in the cones $\BBB=\{j\in\JJJ:r_j(x^*_j)=0, x_j^*\neq0\}$, $\III=\{j\in\JJJ:r_j(x^*_j)<0\}$, and $\EEE=\EEE(x^*)$, respectively, and $x^*_\FFF$ includes all components of $x^*$ that are not in any of the cones.
Further suppose that $(x_\BBB^*)^T=((x^*_1)^T \ \ldots (x^*_{p_\BBB})^T)$, where $\BBB=\{1,\ldots,p_{\BBB}\}$, and that $A_\AAA$ is partitioned in the same way.

Primal non-degeneracy of the SOCP implies all that matrices of the form 
\[
\begin{pmatrix}
[A_\AAA]_1 & \cdots & [A_\AAA]_{p_\BBB} &  [A_\AAA]_\III & [A_\AAA]_\EEE & [A_\AAA]_\FFF\\
\alpha_1 \na r_1(x_1^*)^T& \cdots &  \alpha_{p_\BBB} \na r_{p_\BBB}(x_{p_\BBB}^*)^T & 0^T & v^T & 0^T
\end{pmatrix}
\]
have linear independent rows for all scalars $\alpha_i$ and vectors $v$, not all zero \cite[Eq.~(50)]{alizadeh2003second}. 
This implies that the rows of $A_\AAA$, together with the gradient of any one of the binding constraints in \eqref{eq:equiv_nlp_3} and \eqref{eq:equiv_nlp_4} are linearly independent.  Because the constraint gradients, which are of the form $\na r_j(x^*_j)$ and $e_{ij}$, share no nonzero components when extended to the full space, we conclude that the gradients of all active constraints are linearly independent at $x^*$, i.e., the LICQ holds.

Proof of (\ref{ass:sosc}):
For the purpose of deriving a contradiction, suppose that there exists a direction $d\in\RR^n\setminus\{0\}$ that lies in the null space of the constraints of \eqref{eq:equiv_nlp} that are binding at $x^*$ and for which $d^TH^*d\leq0$. 

\begin{sloppypar}
Since $d$ is in the null space of the binding constraints, we have $A_{\AAA}d=0$, $\na r_j(x^*)^Td=0$ for $j\in\BBB$, and $d_j=0$ for all $j\in\EEE$.  Premultiplying \eqref{eq:nlp_stationary} by $d^T $ gives \end{sloppypar}

\begin{equation}\label{eq:ctd0}
0=c^T d + (\lam^*)^T\underbrace{  A_{\AAA}d}_0  + \sum_{j\in \BBB}\mu^*\underbrace{ \na r_j(x^*)^T d}_0 + \sum_{j\in \III}\underbrace{\mu^*}_0 \na r_j(x^*)^T d + \underbrace{(\nu^*)^T d}_0 = c^Td. 
\end{equation}
What remains to show is that $d$ is a feasible direction for the SOCP, i.e., there exists $\beta>0$ so that $x^*+\beta d$ is feasible for the SOCP.  Because of \eqref{eq:ctd0}, this point has the same objective value as $x^*$ and is therefore also an optimal solution of the SOCP.  This contradicts the fact that Assumption~\ref{ass:socpnondegen} implies that the optimal solution is unique \cite[Theorem 22]{alizadeh2003second}.

By the definition of $H^*$ in Assumption~\ref{ass:sqp} and the choice of $d$, we have
\[
0\geq d^TH^*d = \sum_{j\in\DDD(x^*)}\mu^*_j d_j^T\na^2 r_j(x^*_j)d_j = \sum_{j\in\BBB}\mu^*_j d_j^T\na^2 r_j(x^*_j)d_j .
\]
Since for all $j\in\BBB$, the Hessian $\na^2 r_j(x^*_j)$ is positive semi-definite and $\mu^*_j>0$ from Part (i), this yields $d_j^T\na^2 r_j(x^*_j)d_j = 0$ for all $j\in\BBB$.

Let $j\in\BBB$.  Then from \eqref{eq:rHess}
\begin{equation}\label{eq:dhrd}
0=d_j^T \na^2 r_j(x^*) d_j = \frac{\norm{\bar d_j}^2\norm{\bar x^*_j}^2-(\bar d_j^T \bar x^*_j)^2}{\norm{\bar x^*_j}^3}.
\end{equation}
The definition of $\BBB$ implies $r_j(x_j^*)=0$ and so $x^*_{j0}=\|\bar x^*_j\|$. Since $d_j$ is in the null space of $\na r_j(x_j^*)$, we have $0=\na r_j(x^*_j)^T d_j = -d_{j0} + \frac{\bar d_j^T \bar x_j}{\|\bar x^*_j\|}$, which in turn yields $d_{j0}x^*_{j0} = \bar d_j^T \bar x^*_j$.  Finally, using these relationships together with \eqref{eq:dhrd} gives
\[
0 = \norm{\bar d_j}^2\norm{\bar x^*_j}^2-(\bar d_j^T \bar x^*_j)^2 = \|\bar d_j\|^2\, (x^*_{j0})^2 - (d_{j0}x^*_{j0})^2
\]
and so $d_{j0}^2 = \|\bar d_j\|^2$.  All of these facts imply that for any $\beta\in\RR$,
\begin{align*}
& \|\bar x^*_j + \beta \bar d_j\|^2 - (x^*_{j0} + \beta d_{j0})^2 \\ =& \|\bar x^*_j\|^2 + 2 \beta \bar d_j^T \bar x^*_j + \beta^2 \|\bar d_j\|^2 - \left((x^*_{j0})^2 + 2\beta d_{j0}x^*_{j0} +\beta^2 d_{j0}^2\right)=0,
\end{align*}
which implies $r_j(x^*_j + \beta d_j)=0$ and therefore $x^*_j + \beta d_j\in\KKK_j$.

Further, because $d$ lies in the null space of the active constraints, we have, for any $\beta\in\RR$, that $x^*_j+\beta d_j=0\in\KKK_j$ for all $j\in\EEE(x^*)$ and $A_\AAA(x^*+\beta d)=b_\AAA$.  Finally, since $r_j(x^*_j)<0$ and hence $x^*_j\in\interior(\KKK_j)$ for all $j\in\JJJ\setminus(\EEE(x^*)\cup\BBB)$, and since $x^*_j$ is strictly feasible for all non-binding constraints in \eqref{eq:socp_2}, there exists $\beta>0$ so that $x^*+\beta d$ satisfies all constraints in \eqref{eq:socp}.
\end{proof}

\begin{theorem}
\begin{sloppypar}
Suppose that $c_H>\|H^*\|$. Then the primal-dual iterates $(x^{k+1},\hat\lam^k,\hat z^k)$ converge locally to $(x^*,\lam^*,z^*)$ at a quadratic rate.
\end{sloppypar}
\end{theorem}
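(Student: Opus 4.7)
The plan is to reduce the analysis to Theorem~\ref{thm:sqp_soc} applied to the NLP reformulation \eqref{eq:equiv_nlp}, whose hypotheses (Assumption~\ref{ass:sqp}) are guaranteed by Lemma~\ref{lem:nondegen}. The main task is to show that for all sufficiently large $k$, the steps produced by Algorithm~\ref{alg:full} coincide with the basic SQP steps of Algorithm~\ref{alg:SQP} (and their second-order corrections) for \eqref{eq:equiv_nlp}, so that the quadratic rate from Theorem~\ref{thm:sqp_soc} transfers to the primal-dual iterates $(x^{k+1},\hat\lam^k,\hat z^k)$.

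First I would collect the consequences of the earlier results. Assumption~\ref{ass:socpnondegen} ensures that the limit point $(x^*,\lam^*,z^*)$ obtained from Theorem~\ref{thm:pdlimit} is unique, so the entire sequence converges to it. Lemma~\ref{lem:rhoconstant} gives $\rho^k=\rho^\infty>\|z^*_{\JJJ,0}\|_\infty$ for all large $k$, and the identity $\mu^*_j=z^*_{j0}$ established inside the proof of Lemma~\ref{lem:nondegen} yields the hypothesis $\rho^\infty>\mu^*_j$ required by Theorem~\ref{thm:sqp_soc}. The assumption $c_H>\|H^*\|$, combined with $\mu^k\to\mu^*$ and the continuity of the Hessian formula \eqref{eq:Hessian}, implies that the rescaling \eqref{eq:Hrescale} is inactive for large $k$, so $H^k$ matches the Lagrangian Hessian used by the basic SQP method.

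Next I would argue that the subproblem \eqref{eq:newton_qp} eventually delivers the same primal-dual solution as \eqref{eq:sqp_subproblem}. Theorem~\ref{lem:EEE} gives $\EEE(x^k)=\EEE(x^*)$ for large $k$, so after the initial assignment we have $\hat\EEE^k=\EEE(x^*)$. Because $x^*_{j0}>0$ for $j\notin\EEE(x^*)$ and $d^{S,k}\to 0$, the inequality $x^k_{j0}+d^{S,k}_{j0}>0$ holds for such $j$ at large $k$, so the while loop in Steps~\ref{s:whileEEE}--\ref{e:whileEEE} terminates without enlarging $\hat\EEE^k$. The reasoning used in the proof of Theorem~\ref{lem:EEE} shows that $\hat z^k_j\in\interior(\CCC^\circ_j(\hat\YYY^k_j))$ for each $j\in\EEE(x^*)$, so conic complementarity in \eqref{eq:opt_qp_4} forces $x^k_j+d^{S,k}_j=0$ for those cones, which is exactly the equality constraint \eqref{eq:sqp_subproblem_3}. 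Condensing the outer-approximation multipliers $\hat\nu^k_j$ into an equality multiplier for $j\in\EEE(x^*)$ and identifying $\hat\mu^k_j$ with the SQP multiplier for $j\in\DDD(x^*)$ shows that $d^{S,k}$, together with these aggregated multipliers, is a KKT point of \eqref{eq:sqp_subproblem}. Uniqueness of the SQP step, guaranteed by Assumption~\ref{ass:sqp} via Lemma~\ref{lem:nondegen}, combined with $d^{S,k}\to 0$, then yields equality of the two primal-dual solutions. A parallel argument handles the second-order correction subproblem \eqref{eq:newton_soc} versus \eqref{eq:sqp_soc}, and the additional test in Step~\ref{s:testSOC} is passed automatically because $x^k_{j0}+d^{S,k}_{j0}+s^k_{j0}$ remains close to $x^*_{j0}>0$ for $j\notin\EEE(x^*)$.

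Finally, Theorem~\ref{thm:sqp_soc}(2) guarantees that either $d^{S,k}$ or $d^{S,k}+s^k$ satisfies the Armijo-type condition \eqref{eq:armijo} evaluated with $d=d^{S,k}$, so Algorithm~\ref{alg:full} accepts one of them in Step~\ref{s:testNewton} or \ref{s:testSOC} and sets $\mu^{k+1}=\hat\mu^k$. Consequently the primal-dual sequence produced by Algorithm~\ref{alg:full} coincides for all large $k$ with the one analyzed in Theorem~\ref{thm:sqp_soc}(1), which converges at a quadratic rate; the relation \eqref{eq:zismu} then translates this rate into quadratic convergence of $(\hat\lam^k,\hat z^k)$ to $(\lam^*,z^*)$. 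The hardest step will be the careful KKT-matching in the previous paragraph: showing that the multipliers of the outer-approximation constraints in \eqref{eq:newton_qp} can be consistently collapsed into the equality-constraint multipliers of \eqref{eq:sqp_subproblem}, and then invoking local uniqueness of the SQP solution to conclude that the primal steps (and not merely the optimal values) actually agree.
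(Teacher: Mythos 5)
Your proposal is correct and follows essentially the same route as the paper's own proof: establish that the Hessian rescaling is inactive, use Theorem~\ref{lem:EEE} to conclude $\hat\EEE^k=\EEE(x^*)$ so that the steps from \eqref{eq:newton_qp} and \eqref{eq:newton_soc} coincide with the basic SQP and second-order-correction steps for \eqref{eq:equiv_nlp}, and then invoke Lemma~\ref{lem:nondegen} and Theorem~\ref{thm:sqp_soc}. The only difference is one of detail: you spell out the KKT-matching and multiplier-condensation argument for why the two subproblems share the same solution, whereas the paper asserts this identification directly (a shorter justification is available: by interior dual complementarity the solution of \eqref{eq:newton_qp} satisfies $x_j^k+d_j^{S,k}=0$ for $j\in\EEE(x^*)$, hence is feasible for the more tightly constrained problem \eqref{eq:sqp_subproblem}, whose feasible set it contains).
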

\begin{proof}
We already established in Theorem~\ref{thm:pdlimit} that the iterates converge to the optimal solution, and since $H^k\to H^*$ and $c_H>\|H^*\|$, the Hessian is not rescaled according to \eqref{eq:Hrescale} in Step~\ref{s:calcH}.
Using Theorem~\ref{lem:EEE} we know that, once $k$ is sufficiently large, the step  $d^{S,k}$ computed in Step~\ref{s:newton} of Algorithm~\ref{alg:full} is identical with the SQP step from \eqref{eq:sqp_subproblem} for \eqref{eq:equiv_nlp}; we can ignore \eqref{eq:newton_qp4} here because $x^*_{j0}>0$ and $d^{S,k}_{j0}\to 0$ and therefore this constraints is not active for large $k$.  This also implies that the condition in Step~\ref{s:whileEEE} is never true and thus $\hat\EEE_k=\EEE(x^*)$.  If the decrease condition in Step~\ref{s:testNewton} is not satisfied, by a similar argument we have that $s^k$ computed in Step~\ref{s:soc} is the second-order correction step from \eqref{eq:sqp_soc} for \eqref{eq:equiv_nlp}.  
Due to Lemma~\ref{lem:nondegen} we can now apply Theorem~\ref{thm:sqp_soc} to conclude that either $d^{S,k}$ or $d^{S,k}+s^k$ is accepted to define the next iterate for large $k$ and that the iterates converge at a quadratic rate.
\end{proof}

\section{Numerical Experiments}\label{sec:numerical}
In this section, we examine the performance of Algorithm~\ref{alg:full}.
First, using randomly generated instances, we consider three types of starting points: (i) uninformative default starting point (cold start), (ii) solution of a perturbed instance, (iii) solution computed by an interior-point SOCP solver whose accuracy we wish to improve.
Then we briefly report results using the test library CBLIB.
The numerical experiments were performed on an Ubuntu 22.04 Linux server with a 2.1GHz Xeon Gold 5128 R CPU and 256GB of RAM.

%

\subsection{Implementation}

We implemented Algorithm~\ref{alg:full} in MATLAB R2021b, with parameters $c_{\text{dec}} = 10^{-6}$, $c_{\text{inc}} = 2$, $c_H=10^{12}$,  and $\rho^{-1} = 50$.  
In each iteration, we identify $\EEE(x^k)=\{j\in\JJJ:\norm{x^k_j}_{\infty}<10^{-6}\}$ and $\DDD(x^k)=\{j\in\JJJ \setminus\EEE(x^k): \norm{\bar x^k_j}>10^{-8}\}$.
The set $\YYY_j^0$ is initialized to $\hat\YYY_j^0$ (see \eqref{eq:Y0}),
and $\lam^0$ is a given starting value for $\lam$, if provided, and zero otherwise.
In addition, since the identification of the optimal extremal-active set $\EEE(x^*)$ requires $z^*_j\in\CCC_j^\circ(\YYY_j)$, we add $-\check z^0_j$ to $\YYY^0_j$, where  $\check z^0= c+A^T \lambda^0$. 

The algorithm terminates when the violation of the SOCP optimality conditions \eqref{eq:opt_socp} for the current iterate satisfies
\begin{equation}\label{eq:SOCPopterror}
E(x^k,\lam^k,\check z^k) = \max\left \{  \begin{array}{l}
     \|[Ax^k-b]^+\|_\infty,  \|(Ax^k-b)\circ \lam^k\|_\infty , \|[-\lambda^k]^+\|_\infty \\[1ex]
     \max_{j\in \setJ} \left\{[r_j(x^k)]^+, [r_j(\check z^k)]^+ , \abs{(x_j^k)^T \check z^k_j} \right\}
\end{array}\right\}\leq \epsilon_{\text{tol}}
\end{equation}
with $\check z^k = c+A^T\lam^k$,
for some $\epsilon_{\text{tol}}>0$. 
%
 
%
As in \cite{ipopt}, the sufficient descent condition \eqref{eq:armijo} is slightly relaxed by
\[
\varphi(\hat x^{k+1};\rho^k) - \varphi(x^k;\rho^k)- 10\epsilon_{\text{mach}}\abs{\varphi(x^k; \rho^k)}
\leq c_{\text{dec}}\left(m^k(x^k+d;\rho^k) - m^k(x^k;\rho^k)\right)
\]
to account for cancellation error, where $\epsilon_{\text{mach}}$ is the machine precision.
Finally, to avoid accumulating very similar hyperplanes that would lead to degenerate QPs, we do not add a new generating point $v_j$ to $\YYY_j^k$ if there already exists $y_j\in \YYY_j^k$ such that $\left\|\frac{\bar v_j}{\|\bar v_{j}\|}-\frac{\bar y_j}{\|\bar y_j\|}\right\|_\infty \leq 10^{-10}.$

In these experiments, we disabled the second-order correction step (Steps \ref{s:soc}--\ref{s:exit_soc}) because we noticed that it was never accepted in practice. 
In a more sophisticated implementation, one would include a heuristic that attempts to detect the Maratos effect and then triggers the second-order correction step in specific situations. 

The QPs were solved using ILOG CPLEX V12.10, with optimality and feasibility tolerances set to $10^{-9}$ and ``dependency checker'' and ``numerical precision emphasis'' enabled, using the primal simplex method.  When CPLEX did not report a solution status ``optimal'' and the QP KKT error was above  $10^{-9}$, a small perturbation was added to the Hessian matrix, i.e., we replaced $H^k$ by $H^k+10^{-7}\cdot I$.  This helped in some cases in which CPLEX (incorrectly) reported that $H^k$ was not positive semi-definite.
%
If CPLEX still did not find a QP solution with KKT error less than $10^{-9}$, we attempted to resolve the QP with the barrier method, the dual simplex method, and the primal simplex method again, until one was able to compute a solution. 
 If all solvers failed for QP \eqref{eq:newton_qp}, the algorithm continued in Step~\ref{s:begininner}.  If no solver was able to solve \eqref{eq:qp_Y}, we terminated the main algorithm and declared a failure.

We emphasize that the purpose of our implementation is to assess whether the proposed algorithm exhibits behavior that validates the stated goals: Convergence from any starting point and rapid local convergence to highly accurate solutions.
In its current implementation, it requires more computation time than highly sophisticated commercial solvers such as MOSEK or CPLEX, which were developed over decades and have highly specialized linear algebra routines that are tightly integrated into the algorithms.
As we observed at the end of Section~\ref{s:complete}, many of the QPs in Algorithm~\ref{alg:full} that are solved in succession are similar to each other, and savings in computation times should therefore be achievable. 
However, our prototype implementation based on the Matlab CPLEX interface does not allow us to utilize callback functions for adding or removing hyperplanes.  Achieving these savings in computation time thus requires a more sophisticated implementation, a task that is outside of the scope of this paper.  Consequently, we do not report solution times here.

\subsection{Randomly generated QCQPs}
The experiments were performed on randomly generated SOCP instances of varying sizes, specified by $(n,m,K)$. Here, $n,m\geq1$ are the number of variables and linear constraints, respectively.  $K\geq1$ specifies the number of cones of each ``activity type'':
$|\EEE(x^*)|=K$, $|\{j\in\JJJ:r_j(x^*_j)=0, x_j^*\neq0\}|=K$, and $|\{j\in\JJJ:r_j(x^*_j)<0\}|=K$, i.e., there are $K$ cones that are extremal-active, $K$ that are active at the boundary, and $K$ that are inactive at the optimal solution $x^*$.  The dimensions of the cones are randomly chosen.
In addition, there are variables that are not part of any cone, with bounds chosen in a way so that the non-degeneracy assumption,  Assumption~\ref{ass:socpnondegen}, holds.
A detailed description of the problem generation is stated in Appendix A in
\cite{techreport}.

\begin{table}
\centering
\begin{tabular}{|r|r|r|c|c|c|c|c|}
\hline
\multicolumn{1}{|c|}{$n$}    & \multicolumn{1}{c|}{$m$}        & \multicolumn{1}{c|}{$K$}      &  solved &  \multicolumn{1}{c|}{total}      & SQP &  total & total \\
& & & & iter & iter  & QP \eqref{eq:newton_qp} & QP \eqref{eq:qp_Y}\\ \hline
200 	&60 	&10	&30	&6.67	&6.67	&9.77    &0.00\\
400 	&120	&20	&30	&7.20	&7.20	&11.57   &0.00\\
1000	&300	&50	&30	&7.23	&7.23	&12.17   &0.00\\\hline
200 	&60 	&4	&30	&7.53	&7.07	&11.83   &0.90\\
400 	&120	&8	&30	&8.27	&7.77	&14.20   &1.00\\
1000	&300	&20	&30	&8.67	&7.80	&15.93   &1.83\\\hline
200 	&60 	&2	&30	&8.47	&7.87	&13.90   &1.20\\
400 	&120	&4	&30	&8.87	&8.07	&15.30   &1.60\\
1000	&300	&10	&30	&9.47	&8.43	&17.27   &1.97\\\hline
\end{tabular}
\caption{Results with $x^0 = 0$, $\epsilon_{\text{tol}} = 10^{-7}$, average per-size statistics taken over 30 random instances. ``solved'': number of instances solved (out of 30); ``total iter'': total number of iterations in Algorithm~\ref{alg:full}; ``SQP iter'': number of iteration in which NLP-SQP step was accepted in Steps~\ref{s:testNewton} or \ref{s:testSOC}; ``total QP \eqref{eq:newton_qp}'' / ``total QP \eqref{eq:qp_Y}'': Total number of QPs of that type solved. }
\label{table:0_initial}
\end{table}
Table \ref{table:0_initial} summarizes the performance of the algorithm with an uninformative starting point $x^0=0$.
Each row lists average statistics for a given problem size ($n,m,K$), taken over 30 random instances.
We see that the proposed algorithm is very reliable and solved every instance to the tolerance $\epsilon=10^{-7}$.
The average number of iterations is mostly between 7--9, during most of which the second-order NLP-SQP step was accepted.

To give an idea of the computational effort, we report the number of times QPs~\eqref{eq:newton_qp} and \eqref{eq:qp_Y} were solved.
And we can draw further conclusions from this data:
Consider, for example, the last row.  At the beginning of each iteration, QP~\eqref{eq:newton_qp} is solved to obtain the NLP-SQP step. The difference with the total number of iterations, i.e., 17.27-9.47=7.80, gives us the total number of times in which the guess $\hat\EEE^k$ of the extremal-active cones needed to be corrected in Steps~\ref{s:whileEEE}--\ref{e:whileEEE}.  In other words, on average, the loop Steps~\ref{s:whileEEE}--\ref{e:whileEEE} is executed 7.80/9.47=0.82 times per iteration.
Similarly, the last column tells us the total number of iterations of the loop in Steps~\ref{s:begininner}--\ref{s:endinner}.  The loop was only executed when the NLP-SQP step was not accepted, so in 9.47-8.43=1.04 iterations, taking 1.97/1.04=1.89 loop iterations on average.

The experiments are presented in three groups where the ratio between $n$ and $K$ is kept constant.  As the number of cones, $K$, decreases from one group to the next, the average size of the individual cones increases by a factor of $2.5$ and $2$, respectively.
This increase seems to result in slightly more iterations in which the SQP step was rejected, indicating that the simple linearization \eqref{eq:newton_qp3} of the non-extremal-active cones becomes sometimes insufficiently accurate.

 \begin{table}
 \centering
\begin{tabular}{|r|r|r|c|c|c|c|c|c|c|}
\hline
\multicolumn{1}{|c|}{$n$}    & \multicolumn{1}{c|}{$m$}        & \multicolumn{1}{c|}{$K$}      &  \multicolumn{1}{c|}{total}      & SQP &  total & total  & Mosek & final\\
& & & iter & iter  & QP \eqref{eq:newton_qp} & QP \eqref{eq:qp_Y}& error & error\\ \hline
              200 &              60 &              10 &            1.10 &                    1.07 &                    1.10 &                    0.07&        2.33e-06 &        1.63e-10 \\
              400 &             120 &              20 &            1.03 &                    1.00 &                    1.03 &                    0.03&        2.67e-06 &        1.70e-10 \\
             1000 &             300 &              50 &            1.07 &                    1.03 &                    1.07 &                    0.03&        3.49e-06 &        1.76e-10 \\ \hline
              200 &              60 &               4 &            1.03 &                    1.03 &                    1.03 &                    0.00&        5.97e-06 &        1.69e-10 \\
              400 &             120 &               8 &            1.00 &                    1.00 &                    1.00 &                    0.00&        2.28e-06 &        1.87e-10 \\
             1000 &             300 &              20 &            1.03 &                    0.83 &                    1.03 &                    0.27&        5.20e-06 &        1.72e-10 \\ \hline
              200 &              60 &               2 &            1.00 &                    1.00 &                    1.00 &                    0.00&        2.02e-06 &        1.53e-10 \\
              400 &             120 &               4 &            1.13 &                    1.10 &                    1.13 &                    0.03&        4.85e-06 &        2.03e-10 \\
             1000 &             300 &              10 &            1.20 &                    1.10 &                    1.20 &                    0.13&        1.22e-05 &        2.41e-10 \\ \hline

\end{tabular}
\caption{Result with MOSEK solution as $x^0$, $\epsilon_{\text{tol}} = 10^{-9}$. All instances were solved. ``Mosek error'': Optimality error $E$ \eqref{eq:SOCPopterror} at Mosek solution; ``final error'': Optimality error $E$ at final iterate of Algorithm~\ref{alg:full}.}
\label{table:mosek}
\end{table}

The remaining experiments in this section investigate to which degree the algorithm is able to achieve our primary goal of taking advantage of a good starting point.
We begin with an extreme situation, in which we first solve an instance with the interior-point SOCP solver MOSEK V9.1.9 (called via CVX), using the setting \texttt{cvx\_precision=high}  corresponding to the MOSEK tolerance $\epsilon=\epsilon_{\rm mach}^{2/3}$, and give the resulting primal-dual solution as starting point to Algorithm~\ref{alg:full}.  Choosing any tighter MOSEK tolerances leads to failures in several problems.
Table~\ref{table:mosek} summarizes the results.
In all cases, the algorithm converges rapidly to an improved solution, reducing the error by 4 orders of magnitude, most of the time with only a single second-order iteration.   The Mosek error was dominated by the violation of complementarity.
This demonstrates the ability of the proposed method to improve the accuracy of a solution computed by an interior-point method.
%

\begin{table}
\centering
\begin{tabular}{|r|r|r|c|r|c|c|c|}
\hline
\multicolumn{1}{|c|}{$n$}    & \multicolumn{1}{c|}{$m$}        & \multicolumn{1}{c|}{$K$}      &  solved &  \multicolumn{1}{c|}{total}      & SQP &  total & total \\
& & & & iter & iter  & QP \eqref{eq:newton_qp} & QP \eqref{eq:qp_Y}\\ \hline
200	&60	&10	&30	&1.00	&0.97	&1.00	&0.07\\
400	&120	&20	&30	&1.00	&0.97	&1.00	&0.03\\
1000	&300	&50	&30	&1.00	&0.97	&1.00	&0.07\\\hline
200	&60	&4	&30	&1.00	&1.00	&1.00	&0.00\\
400	&120	&8	&30	&1.00	&0.93	&1.00	&0.07\\
1000	&300	&20	&30	&1.00	&0.87	&1.00	&0.20\\\hline
200	&60	&2	&30	&1.00	&1.00	&1.00	&0.00\\
400	&120	&4	&30	&1.00	&0.97	&1.00	&0.03\\
1000	&300	&10	&30	&1.07	&1.00	&1.03	&0.07\\\hline
\end{tabular}
\caption{Result with $10^{-3}$ perturbation, $\epsilon_{\text{tol}} = 10^{-7}$. }
\label{table:perturb3}
\end{table}
\begin{table}
\centering

\begin{tabular}{|r|r|r|c|r|c|c|c|}
\hline
\multicolumn{1}{|c|}{$n$}    & \multicolumn{1}{c|}{$m$}        & \multicolumn{1}{c|}{$K$}      &  solved &  \multicolumn{1}{c|}{total}      & SQP &  total & total \\
& & & & iter & iter  & QP \eqref{eq:newton_qp} & QP \eqref{eq:qp_Y}\\ \hline
200	&60	&10	&30	&1.20	&1.07	&1.00	&0.19\\
400	&120	&20	&30	&1.33	&1.17	&1.00	&0.73\\
1000	&300	&50	&30	&1.60	&1.23	&1.02	&1.29\\\hline
200	&60	&4	&30	&1.27	&1.13	&1.02	&0.71\\
400	&120	&8	&30	&1.67	&1.27	&1.16	&0.48\\
1000	&300	&20	&30	&2.10	&1.40	&1.27	&0.57\\\hline
200	&60	&2	&30	&1.67	&1.33	&1.24	&0.42\\
400	&120	&4	&30	&2.30	&1.87	&1.30	&0.38\\
1000	&300	&10	&30	&3.67	&2.53	&1.53	&0.59\\\hline
    \end{tabular}
\caption{Result with $10^{-1}$ perturbation, $\epsilon_{\text{tol}} = 10^{-7}$.}
\label{table:perturb1}
\end{table}

For the final experiments, summarized in Tables \ref{table:perturb3} and \ref{table:perturb1}, the starting point is the MOSEK solution of a perturbed problem, in which 10\% of the objective coefficients $c$ were perturbed by uniformly distributed random noise of the order of $10^{-3}$ and $10^{-1}$, respectively.
For the small perturbation, similar to Table~\ref{table:mosek}, Algorithm~\ref{alg:full} terminated in one iteration most of the time.
More iterations were required for the larger perturbation, but still significantly fewer compared to the uninformative starting point, see Table~\ref{table:0_initial}.

\subsection{CBLIB instances}

\begin{table}
\centering
\begin{tabular}{|l|r|r|r|c|r|r|c|}
\hline
\multicolumn{1}{|c|}{Problem} & \multicolumn{1}{|c|}{\# var} & \multicolumn{1}{|c|}{\# con} & \multicolumn{1}{|c|}{\# soc} & solved/ & total & SQP  &  iter warm/ \\
\multicolumn{1}{|c|}{subset}  &  &  &  &  total & \multicolumn{1}{|c|}{iter} &  \multicolumn{1}{|c|}{iter} & iter cold \\\hline
\texttt{             10/20*} &   3024.0 &   6030.6 &    336.0 &    10/11 &      6.5 &      6.5 &     0.57\\
\texttt{     achtziger\_*}   &   2314.0 &   1856.7 &    257.0 &     6/7 &     21.3 &     10.2 &     0.53\\
\texttt{            as\_*}   &   4909.0 &   5695.7 &    669.0 &    20/20 &     10.4 &      8.4 &     0.37\\
\texttt{            ck\_*}   &   2376.0 &   2375.0 &     15.0 &    90/90 &      5.8 &      4.6 &     0.40\\
\texttt{     classical\_*}   &    159.3 &    200.4 &      1.0 &   409/409 &      7.3 &      7.3 &     0.49\\
\texttt{           clay*}   &    457.2 &    691.9 &     82.9 &    11/12 &     67.6 &     13.2 &     1.31\\
\texttt{         estein*}   &    103.1 &    165.2 &     14.0 &     9/9 &     18.3 &      4.4 &     0.30\\
\texttt{           flay*}   &    175.0 &    408.0 &      4.0 &    10/10 &      7.0 &      3.6 &     0.34\\
\texttt{             fo[7-9]*} &    208.6 &    540.2 &     15.9 &    19/19 &     12.3 &      3.0 &     0.53\\
\texttt{             m[3-9]*}  &    150.2 &    370.5 &     12.8 &     8/8 &     10.9 &      4.0 &     0.44\\
\texttt{             nb*}      &   3098.8 &    321.0 &    816.0 &     4/4 &      4.8 &      4.2 &     3.43\\
\texttt{         netmod*}     &    989.7 &   2593.0 &      4.7 &     3/3 &      4.0 &      4.0 &     0.33\\
\texttt{            no7\_*}   &    169.0 &    438.0 &     14.0 &     5/5 &     10.6 &      4.4 &     0.46\\
\texttt{          o[7-9]\_*} &    183.2 &    466.9 &     14.7 &     9/9 &      8.7 &      3.2 &     0.66\\
\texttt{            pp-*}     &   2960.0 &   2221.0 &    370.0 &     6/6 &     12.8 &     12.8 &     0.17\\
\texttt{        robust\_*}   &    253.9 &    298.3 &      2.0 &   420/420 &      6.8 &      6.8 &     0.67\\
\texttt{         sched\_*} &   7361.5 &   3685.0 &      1.5 &     4/4 &     12.5 &     10.0 &     0.35\\
\texttt{     shortfall\_*} &    249.9 &    294.3 &      2.0 &   420/420 &      7.8 &      7.8 &     0.37\\
\texttt{           slay*} &    393.0 &    936.0 &     14.0 &    14/14 &      8.6 &      7.0 &     0.27\\
\texttt{          sssd-*} &    286.0 &    314.5 &     18.0 &    16/16 &      7.6 &      5.6 &     0.42\\
\texttt{      stolpe07-*} &   1483.0 &   1202.0 &    164.7 &     3/3 &     29.0 &     18.3 &     0.71\\
\texttt{            tls*} &    521.4 &    989.8 &     51.6 &     5/6 &      7.6 &      7.0 &     0.49\\
\texttt{        turbine*} &    201.6 &    289.3 &     52.4 &     7/7 &      4.9 &      4.9 &     0.30\\
\texttt{       uflquad-*} &   8011.0 &   6811.0 &   1333.3 &     3/4 &     15.3 &     14.7 &     0.19\\
\texttt{        wiener\_*} &   2297.5 &   2508.5 &     71.3 &    41/45 &     12.4 &      9.9 &     0.43\\
\texttt{           misc} &   1534.1 &   1374.5 &    158.9 &    11/14 &     10.5 &      7.6 &     0.33\\\hline
\end{tabular}

\caption{Results for CBLIB instances, averaged per problem group, $\epsilon_{\text{tol}} = 10^{-5}$.  ``Problem subset'': name of problem group; ``\# var'': number of variables; ``\# con'': number of linear constraints; ``\# soc'': number of second-order cone constraints; ``solved/total'': number of solved vs.\ total instances; ``total iter'': number of iterations in Algorithm~\ref{alg:full}; ``SQP iter'': number of iterations in which NLP-SQP step was accepted;  ``iter warm/iter cold'': iterations for warm start divided by iterations for cold start (only for instances solved in both settings).}
\label{table:cblib}
\end{table}

To demonstrate the robustness of the algorithm we also solved instances from the Conic Benchmark Library CBLIB \cite{cblib}.  Some instances involve rotated second-order cone constraints, and we reformulated them so that they fit into our standard form \eqref{eq:socp}. We chose all 1,575 instances with at most 10,000 variables and 10,000 constraints.  Integer variables were relaxed to be continuous.

Table~\ref{table:cblib} shows the statistics for the starting point $x^0=0$, where instances were grouped into sets of problems with similar names, and the remaining ones are collected in \texttt{misc}.  The method was able to solve 99.2\% of the instances, where  10 problems could not be solved due to failures of the QP subproblem solver, and  Algorithm~\ref{alg:full} exceeded the maximum number of 200 iterations in 2 cases.  In comparison, MOSEK, with default settings, failed on 5 instances (those were solved correctly with Algorithm~\ref{alg:full}), incorrectly declared 3 instances to be infeasible, and labeled 6 instances to be unbounded (of which 3 were solved by Algorithm~\ref{alg:full}). 
We observed that some instances, especially those in the \texttt{clay*}, \texttt{fo[7-9]*}, \texttt{m[3-9]*}, \texttt{no7*}, \texttt{o[7-9]\_*} subsets, are degenerate, having an optimal objective function value of 0, and the assumption necessary to prove fast local convergence is violated.
This matches the observations in the table, where the SQP step was accepted only in a relatively small fraction of the iterations.
%

To showcase the warm-starting feature of the algorithm, we took the 1,563 previously successfully solved instances, perturbed 10\% of the entries of the final primal-dual iterate by a random perturbation, uniformly chosen in $[-0.1,0.1]$, and used this as the starting point for a warm-started run.  Here, QP subproblem failure occurred in 3 cases and 2 instances exceeded the iteration limit.  As we can see from the last column in Table~\ref{table:cblib}, on average, the number of iterations was reduced in most cases, often by more than 50\%.

\section{Concluding remarks}\label{sec:conclusion}
We presented an SQP algorithm for solving SOCPs and proved that it converges from any starting point and achieves local quadratic convergence for non-degenerate SOCPs.
Our numerical experiments indicate that the algorithm is reliable, converges quickly when a good starting point is available, and produces more accurate solutions than a state-of-the-art interior-point solver.
%
%

Future research would investigate whether the proposed algorithm is a valuable alternative for interior-point methods for small problems or for the solution of a sequence of related SOCPs.  An efficient implementation of the algorithm beyond our Matlab prototype would be tightly coupled with a tailored active-set QP solver that efficiently adds or removes cuts instead of solving each QP essentially from scratch. Parametric active-set solvers such as qpOASES \cite{qpoases} or QORE \cite{qore} might be suitable options since they do not require primal or dual feasible starting points. 

\section*{Acknowledgments}
We thank Javier Pe\~na for his suggestion for the proof of Lemma~\ref{lem:zbounded}, as well as three  referees whose comments helped us to improve the paper.

\bibliographystyle{abbrv}
\bibliography{reference}

\newpage
\appendix
\section{Generation of random instances}
\label{app:problems}




The test instances used for the numerical experiments have the form
\begin{align*}
\minimize{x\in \RR^n}& \sum_{j=1}^p c_j^T x_j + c_{p+1}^T x_{p+1}\\
\subject & \sum_{j = 1}^p A_j x_j + A_{p+1} x_{p+1} = b,\\
         & x_j\in \setK_j, \quad j\in [p],\\
         & x_{j0} \leq 1000, \quad 0\leq x_{p+1} \leq 1000, 
\end{align*}
where $(c_1, \cdots c_p, c_{p+1})\in \RR^{n_1}\times \cdots \times \RR^{n_p}\times \RR^{n_{p+1}}$ is the partition of the  objective vector $c$,  and $(A_1, \cdots, A_{p+1})$ is the partition of the column vectors of constraint matrix $A\in \RR^{m\times n}$. 
The subvector $x_{p+1}$ includes all optimization variables that are not in any of the cones.

In contrast to the original SOCP \eqref{eq:socp}, this formulation includes linear equality constraints, but it is straightforward to generalize Algorithm \ref{alg:full} and its convergence proofs for this setting.
The formulation above also includes large upper bounds on all variables so that the set $X$ defined in \eqref{eq:setX} is bounded.  However, the upper bounds were chosen so large that they are not active at the optimal solution.
\begin{algorithm}
    \caption{Random Instance Generation}\label{alg:data_gen}
    \begin{algorithmic}[1]
        \Require { $n$ (number of variables), $m$ (number of linear constraints), $p$ (total number of cones), $K_0$ (number of extremal-active cones), $K_I$ (number of inactive cones), $K_B$ (number of cones active at the boundary, excluding extremal-active cones), $d$ (density of nonzeros in constraint matrix)}.
        Conditions: $n,m,p,K_0,K_I,K_B\in\mathbb{Z}_+$, $d\in(0,1]$, $p=K_0+K_I+K_B$, $n>2p$, $n>m+K_B+\lfloor(m+K_B)/2\rfloor$.
        \State Randomly choose positive integers $n_{1},\ldots,n_{K_0}$ so that $n_j\geq2$ and $\sum_{j=1}^{K_0} n_j = \lfloor(m+K_B)/2\rfloor$. 
        \State Randomly choose positive integers $n_{K_0+1},\ldots,n_{p}$ so that $n_j\geq2$ and $\sum_{j=K_0+1}^{p} n_j = m+K_B$. \label{s:exp_start}
        \For{$j = 1, \ldots, K_0$} \Comment{Generate $x_j^*=0$, $z_j^*\in \interior (\setK_j)$}
        \State Set $x_j^*\gets 0$. 
        \State Sample $\bar z_j\sim (-10, 10)^{n_j-1}$, $z_{j0}^*\sim (1, 5)$ and $\epsilon_j \sim (0,1)$. Set $\bar z_j^*\gets (2+\epsilon)\frac{z^*_{j0}}{\|\bar z_j\|} \bar z_j$.
        \EndFor
        \For{$j = K_0+1,\ldots, K_0+K_I$} \Comment{Generate $z_j^*=0$, $x_j^*\in \interior (\setK_j)$}
        \State Set $z_j^*\gets 0$.
        \State Sample $\bar x_j\sim (-10, 10)^{n_j-1}$, $x_{j0}^*\sim (1, 5)$ and $\epsilon_j \sim (0,1)$. Set $\bar x_j^*\gets (2+\epsilon)\frac{x^*_{j0}}{\|\bar x_j\|} \bar x_j$.
        \EndFor
        \For{$j = K_0+K_I+1,\ldots, p$}\Comment{Generate $z_j^*, x_j^*\in \bd(\setK_j)$}
        \State Sample $\bar x_j\sim (-10, 10)^{n_j-1}$ and  $x_{j0}^*\sim (1, 5)$. Set $\bar x_j^*\gets \frac{x^*_{j0}}{\|\bar x_j\|} \bar x_j$.
        \State Sample $\beta\sim(1, 5)$. Set $\bar z^*_j\gets -\beta \bar x_j^*$ and $\bar z^*_{j0} \gets \beta x_{j0}^*$
        \EndFor
        \State Set $n_{\rm fix}\gets n-m-K_B-\sum_{j=1}^{K_0} n_j$ and $n_{\rm free}\gets n-n_{\rm fix}-\sum_{j=1}^{p} n_j$. \label{s:nfixed} 
        \For{$j = 1,\ldots,n_{\rm fix}$} 
        \State Set [$x_{p+1}^*]_{j}\gets 0$ and sample $[z^*_{p+1}]_j\sim (1, 5)$.
        \EndFor
        \For{$j = n_{\rm fix}+1,\ldots,n_{\rm free}$} 
        \State Set [$z_{p+1}^*]_{j}\gets 0$ and sample $[x^*_{p+1}]_j\sim (1, 5).$
        \EndFor
        \State Sample linear independent rows $A_i\sim (-5, 5)^{n}$ with density $d$, for $i=1,\cdots,m$. 
        \State Call \texttt{pcond} to calculate primal condition number $c_p$ and call \texttt{dcond} to calculate dual condition number $c_d$. \label{s:pdcond}
        \State \textbf{if} {$c_p>10^{5}$ or $c_d>10^5$} \textbf{then }go to Step \ref{s:exp_start} and redo the process.
        \State Sample $\lambda^*\sim (1, 10)^{m}$. Set $b\gets A x^*$ and $c\gets -A^T\lambda^* + z^*$.
        \State \Return $A, b, c, x^*, \lambda^*, z^*$.
    \end{algorithmic}
\end{algorithm}

Algorithm~\ref{alg:data_gen} describes how the data for these instances are generated.
The algorithm generates an optimal primal-dual solution in a way so that, at the solution, $K_0$ cones are extremal-active, for $K_I$ cones the optimal solution lies in the interior of the cone, and for $K_B$ the optimal solution is at the boundary but not the extreme point of the cone.  In our experiments, there is an equal number of $K$ of each type.
The number of variables that are active at the lower bound, $n_{\rm fix}$, is chosen in a way so that the optimal solution is non-degenerate.
The linear constraints reduce the number of degrees of freedom by $m$, each extremal active cone by $n_j$, and each otherwise active cone by $1$.  
Step~\ref{s:nfixed} calculates the number of constraints that are active at zero so that the total number of degrees of freedom fixed by constraints equals $n$.
Lastly, in Step \ref{s:pdcond}, we use \texttt{pcond} and \texttt{dcond} functions provided in SDPPack~\cite{nayakakuppam1997sdppack} to double-check if a generated instance is also numerically non-degenerate. These functions return the primal and dual conditions numbers, $c_p$ and $c_d$, respectively, and we discard an instance if either number is above a threshold.
Only about 1\% of instances created were excluded in this manner.

\end{document}